\newcommand{\norm}[1]{\left\Vert#1\right\Vert}
\def\N{\mathbb{N}}
\def\R{\mathbb{R}}
\def\AA{{\cal A}}
\def\BB{{\cal B}}
\def\HH{{\cal H}}
\def\GG{{\cal G}}
\def\LL{{\cal L}}
\def\1{{\mathbb 1}}
\newtheorem{thm}{Theorem}[section]
\newtheorem{cor}[thm]{Corollary}
\newtheorem{lem}[thm]{Lemma}
\newtheorem{prp}[thm]{Proposition}
\newtheorem{hyp}[thm]{Hypothesis}
\theoremstyle{definition}
\newtheorem{dfn}[thm]{Definition}
\theoremstyle{remark}
\newtheorem{rem}[thm]{Remark}
\newcommand{\beqn}{\begin{equation}}
\newcommand{\eeqn}{\end{equation}}
\title{Rate of convergence to an asymptotic profile for the
  self-similar fragmentation and growth-fragmentation equations}
\author{
  \textsc{María J. Cáceres}\footnote{Departamento de Matemática
    Aplicada, Universidad de Granada, E18071 Granada, Spain. Email:
    \texttt{caceresg@ugr.es}}
  \and
  \textsc{José A. Cañizo}\footnote{Departament de Matemàtiques,
    Universitat Autònoma de Barcelona, 08193 Bellaterra, Spain. Email:
    \texttt{canizo@mat.uab.es}}
  \and
  \textsc{Stéphane Mischler}\footnote{IUF and CEREMADE,
    Univ. Paris-Dauphine, Place du Maréchal de Lattre de Tassigny,
    75775 Paris CEDEX 16, France. Email:
    \texttt{mischler@ceremade.dauphine.fr}}
}
\date{November 25, 2010}
\begin{document}

\maketitle

\begin{abstract}
  We study the asymptotic behavior of linear evolution equations of
  the type $\partial_t g  = Dg  + \LL g - \lambda g$, where $\LL$ is the
  fragmentation operator, $D$ is a differential operator, and
  $\lambda$ is the largest eigenvalue of the operator $Dg + \LL g$. In
  the case $Dg = -\partial_x g$, this equation is a rescaling of the
  growth-fragmentation equation, a model for cellular growth; in the
  case $Dg  = - \partial_x (x\,g)$, it is known that $\lambda = 1$ and the
  equation is the self-similar fragmentation equation, closely related
  to the self-similar behavior of solutions of the fragmentation
  equation $\partial_t f = \LL f$.

  By means of entropy-entropy dissipation inequalities, we give
  general conditions for $g$ to converge exponentially fast to the
  steady state $G$ of the linear evolution equation, suitably
  normalized. In other words, the linear operator has a spectral gap
  in the natural $L^2$ space associated to the steady state. We extend
  this spectral gap to larger spaces using a recent technique based on
  a decomposition of the operator in a dissipative part and a
  regularizing part.
  %

\end{abstract}

\par\medskip
\noindent\textbf{Keywords.} Fragmentation, growth, entropy,
exponential convergence, self-similarity, long-time behavior.

\newpage

\tableofcontents

\section{Introduction and main results}
\label{sec:intro}

In this work we study equations which include a differential term and
a fragmentation term. These equations are classical models in biology
for the evolution of a population of cells, in polymer physics for the
size distribution of polymers, and arise in other contexts where there
is an interplay between growth and fragmentation phenomena. The
literature on concrete applications is quite large and we refer the
reader to \cite{MD86,MR2270822} as general sources on the topic, and
to the references cited in
\cite{DG10,citeulike:4788671,citeulike:4308171} for particular
applications. We deal with equations of the following type:
\begin{subequations}
  \label{eq:drift-frag}
  \begin{gather}
    \label{eq:drift-frag-eq}
    \partial_t g_t(x) + \partial_x (a(x) g_t(x)) + \lambda g_t(x)
    = \LL [g_t](x)
    \\
    \label{eq:drift-frag-boundary}
    g_t(0) = 0 \qquad (t \geq 0)
    \\
    \label{eq:drift-frag-initial}
    g_0(x) = g_{in}(x) \qquad (x > 0).
  \end{gather}
\end{subequations}
The unknown is a function $g_t(x)$ which depends on the time $t \geq
0$ and on $x > 0$, and for which an initial condition $g_{in}$ is
given at time $t=0$. The quantity $g_t(x)$ represents the density of
the objects under study (cells or polymers) of size $x$ at a given
time $t$. The function $a = a(x) \geq 0$ is the \emph{growth rate} of
cells of size $x$. Later we will focus on the growth-fragmentation and
the self-similar fragmentation equations, which correspond to $a(x) =
1$ and $a(x) = x$, respectively.

Most importantly, we pick $\lambda$ to be the largest eigenvalue of
the operator $g \mapsto -  \partial_x (a\, g) + \LL g$, acting on a
function $g=g(x)$ depending only on $x$; see below for known
properties of this eigenvalue and its corresponding eigenvector.

The \emph{fragmentation operator} $\LL$ acts on a function $g = g(x)$ as
\begin{equation}
  \label{eq:frag-op}
  \LL g(x) :=  \LL_+g(x)  - B(x) g(x),
\end{equation}
where the positive part $\mathcal{L}_+$ is given by
\begin{equation}
  \label{eq:L+}
  \LL_+ g(x) :=  \int_x^\infty b(y,x) g(y) \,dy.
\end{equation}
The coefficient $b(y,x)$, defined for $y > x > 0$, is the
\emph{fragmentation coefficient}, and $B(x)$ is the \emph{total
  fragmentation rate} of cells of size $x > 0$. It is obtained from
$b$ through
\begin{equation}
  \label{eq:B}
  B(x) := \int_0^x \frac{y}{x} \, b(x,y) \,dy
  \qquad (x > 0).
\end{equation}

\paragraph{Asymptotic behavior}

As said above, we pick $\lambda$ to be the largest eigenvalue of the
operator $g \mapsto -  \partial_x(a\, g) + \LL g$. Under general
conditions on $b$ and $a$, it is known \cite{DG10,citeulike:4788671}
that $\lambda$ is positive and has a unique associated eigenvector $G$
(up to a factor, of course), which in addition is nonnegative; i.e.,
there is a unique $G$ solution of
\begin{subequations}
  \label{eq:G}
  \begin{gather}
    \label{eq:G-eq}
    (a(x)\,G(x))' + \lambda G(x)
    = \LL (G)(x)
    \\
    \label{eq:G-boundary}
    a(x)G(x)\big\vert_{x=0} = 0,
    \\
    \label{eq:G-positive-normalized}
    G \geq 0,
    \quad
    \int_0^\infty G(x)\,dx = 1.
  \end{gather}
\end{subequations}
The associated dual eigenproblem reads
\begin{subequations}
  \label{eq:phi}
  \begin{gather}
    \label{eq:phi-eq}
    - a(x) \partial_x \phi
    + (B(x)+\lambda)\, \phi(x)
    = \LL_+^{*} \phi(x), 
    \\
    \label{eq:phi-positive-normalized}
    \phi \geq 0,
    \quad
    \int_0^\infty G(x) \phi(x) \,dx = 1,
  \end{gather}
\end{subequations}
where
\begin{equation}
  \label{eq:defDualLL+}
  \LL_+^{*} \phi(x):=   \int_0^x b(x,y) \phi(y) \,dy,
\end{equation}
and we have chosen the normalization $\int G \phi = 1$. This dual
eigenproblem is interesting because $\phi$ gives a conservation law
for \eqref{eq:drift-frag}:
\begin{equation}
  \label{eq:phi-conservation}
  \int_0^\infty \phi(x)\, g_t(x) \,dx
  =
  \int_0^\infty \phi(x)\, g_{in}(x) \,dx
  = \text{Cst}
  \qquad (t \geq 0).
\end{equation}
The eigenvector $G$ is an equilibrium of
equation~\eqref{eq:drift-frag} (a solution which does not depend on
time) and one expects that the asymptotic behavior of
\eqref{eq:drift-frag} be described by this particular solution, in the
sense that
\begin{equation}
  \label{eq:n-converges}
  g_t \to G \quad \text{as $t \to \infty$,}
\end{equation}
with convergence understood in some sense to be specified, and $g$
normalized so that $\int \phi(x) g_t(x) \,dx = 1$. Furthermore, one
expects the above convergence to occur exponentially fast in time;
this is,
\begin{equation}
  \label{eq:n-converges-exp}
  \norm{ g_t  - G} \leq C \norm{g_{in} - G} e^{-\beta t},
\end{equation}
for some $\beta > 0$, in some suitable norm. This latter result has
been proved in some particular cases which are essentially limited to
the case of $B$ constant
\cite{citeulike:4308171,citeulike:4108090}. In this paper we want to
prove this result for more general $B$, which we do by using entropy
methods. In order to describe our results, we need first to describe
the entropy functional for this type of equations.

\paragraph{Entropy}
\label{sec:entropy}

The following \emph{general relative entropy principle}
\cite{citeulike:4518527,citeulike:4065359} applies to solutions
of \eqref{eq:drift-frag}:
\begin{multline}
  \label{eq:gre}
  \frac{d}{dt} \int_0^\infty \phi(x) G(x)
  H\left( u(x) \right) \,dx
  =
  \int_0^\infty \int_y^\infty \phi(y) b(x,y) G(x)
  \\
  \times \Big(
  H(u(x)) - H(u(y))
  + H'(u(x)) \left( u(y) - u(x) \right)
  \Big)
  \,dx \,dy,
\end{multline}
where $H$ is any function and where here and below
\begin{equation}
  \label{eq:u}
  u(x) := \frac{g(x)}{G(x)}
  \qquad (x > 0).
\end{equation}
When $H$ is a convex function, the right hand side of
equation~\eqref{eq:gre} is nonpositive, and in the particular case of $H(x)
:= (x-1)^2$ we have
\begin{equation}
  \label{eq:gre2}
 \frac{d}{dt} H_2[g|G]  =
  - D^b[g|G] \le 0,
  \end{equation}
where we define 
\begin{equation}
  \label{eq:defH2}
  H_2[g|G] := \int_0^\infty \phi \, G \, (u-1)^2 \, dx
  \quad \text{ and }
\end{equation}
\begin{equation}  \label{eq:defDb}
D^b[g|G] := \int_0^\infty \int_x^\infty \phi(x) \, G(y) \, b(y,x) \, (u(x) - u(y))^2 \, dydx.
\end{equation}
Since
\begin{equation}
  \label{eq:defH2-2}
  H_2[g|G] = \int_0^\infty (g - G)^2 \, \frac{\phi}{G} \, dx
  =  \| g - G \|^2_{L^2(\phi \, G^{-1} dx)} 
\end{equation}
we see that proving $H_2(g_t|G) \to 0$ implies that the long time trend to equilibrium 
\eqref{eq:n-converges} holds, and proving the entropy-dissipation entropy inequality 
\begin{equation}  \label{eq:EDEineq}
  \qquad H_2[g|G] \le \frac{1}{2 \, \beta} \, D^b[g|G],
\end{equation}
implies that the exponentially fast long time trend to equilibrium
\eqref{eq:n-converges-exp} holds for the same $\beta > 0$ and for the
norm $\| \cdot \| = \| \cdot \|_{L^2(\phi \, G^{-1} dx)}$.

\smallskip The main purpose of our work is precisely to study the
functional inequality \eqref{eq:EDEineq} and establish it under
certain conditions on the fragmentation coefficient $b$. We notice
that, while some results on convergence to equilibrium for equation
\eqref{eq:drift-frag} are available, no inequalities like
\eqref{eq:EDEineq} were known, so one of the main points of our work
is to show that the entropy method is applicable, in certain cases, to
give a rate of convergence for this type of equations.

We focus on the two remarkable cases $a(x) = 1$, which corresponds to
the so-called growth-fragmentation equation, and $a(x) = x$, which
gives the self-similar fragmentation equation. There are several
reasons for restricting our attention to them, the main one being that
they are the ones most extensively studied in the literature due to
their application as models in physics and biology. Also, the
inequality \eqref{eq:EDEineq} depends on the particular properties of
the solution $\phi$ to the dual eigenproblem (\ref{eq:phi}) and the
equilibrium $G$. The existence and properties of these have been
studied mainly for the above two particular cases, and are questions
that require different techniques and deserve a separate study. One of
our main results gives general conditions under which the
entropy-entropy dissipation inequality \eqref{eq:EDEineq} holds, and
this may be applied to cases with a more general $a(x)$, once suitable
bounds are proved for the corresponding profiles $\phi$ and $G$.

In order to understand our two model cases, let us describe them in
more detail and give a short review of previously known results for
them.

\paragraph{The growth-fragmentation equation}
\label{sec:gf}

The growth-fragmentation equation is the following
\cite{citeulike:4108090}:
\begin{subequations}
  \label{eq:gf}
  \begin{gather}
    \label{eq:gf-eq}
    \partial_t n_t + \partial_x n_t
    = \LL n_t,
    \\
    \label{eq:gf-boundary}
    n_t(0) = 0 \qquad (t \geq 0)
    \\
    \label{eq:gf-initial}
    n_0(x) = n_{in}(x) \qquad (x > 0).
  \end{gather}
\end{subequations}
Here, $n_t(x)$ represents the number density of cells of a certain
size $x > 0$ at time $t > 0$. The nonnegative function $n_{in}$ is the
initial distribution of cells at time $t=0$. Equation \eqref{eq:gf}
models a set of cells which grow at a constant rate given by the drift
term $\partial_x n$, and which can break into any number of pieces, as
modeled by the right hand side of the equation. The quantity $b(x,y)$,
for $x > y > 0$, represents the mean number of cells of size $y$
obtained from the breakup of a cell of size $x$. If one looks for
solutions of \eqref{eq:gf-eq}--\eqref{eq:gf-boundary} which are of the
special form $n_t(x) = G(x) \, e^{\lambda t}$, for some $\lambda \in
\R$, one is led to the eigenvalue problem for the operator
$-\partial_x + \LL$ and its dual eigenvalue problem, given by
equations \eqref{eq:G} and \eqref{eq:phi}, respectively. It has been
proved \cite{DG10,citeulike:4788671} that quite generally there exists
a solution to this eigenvalue problem which furthermore satisfies
$\lambda > 0$ and $G > 0$.

For this particular $\lambda$, we consider the change
\begin{equation}
  \label{eq:m}
  g_t(x) := n_t(x) e^{-\lambda t}.
\end{equation}
Then, $n$ satisfies the \emph{rescaled growth-fragmentation equation},
which is the particular case of eq.~\eqref{eq:drift-frag} with $a(x) =
1$. The long time convergence \eqref{eq:n-converges} or
\eqref{eq:n-converges-exp} means here that the generic solutions
asymptotically behave like the first eigenfunction $G(x) \, e^{\lambda
  t}$ (with same $\phi$ moment).

Let us give a short review of existing results on the asymptotic
behavior of equation \eqref{eq:drift-frag} for constant $a(x)$. In
\cite{citeulike:4518527,citeulike:4065359} the general entropy
structure of equation \eqref{eq:drift-frag} and related models was
studied, and used to show a result of convergence to the equilibrium
$G$ without rate for quite general coefficients $b$, but under the
condition that the initial condition be bounded by a constant multiple
of $G$ \cite[Theorem 4.3]{citeulike:4065359}. Some results were
obtained later for the case of \emph{mitosis} with a constant total
fragmentation rate $B(x) \equiv B$, which corresponds to the
coefficient $b(x,y) = 2 \delta_{y=x/2}$: this was studied in
\cite{citeulike:4308171}, where exponential convergence of solutions
to the equilibrium $G$ was proved. Similar results were obtained for
the mitosis case when $B(x)$ is bounded above and below between two
positive constants (i.e., for $b(x,y) = 2 B(x) \delta_{y=x/2}$). An
exponential speed of convergence has also been proved in
\cite{citeulike:4108090} allowing for quite general fragmentation
coefficients $b$, provided that the total fragmentation rate $B(x)$ is
a constant.  Of course, in the above results existence of the
equilibrium $G$ and the eigenfunction $\phi$ were also proved as a
necessary step to study the asymptotic behavior of equation
\eqref{eq:drift-frag}. The problem of existence of these profiles was
studied in its own right in \cite{DG10,citeulike:4788671}, where
results are obtained for general fragmentation coefficients $b$,
without the restriction that the total fragmentation rate should be
bounded.

\paragraph{The self-similar fragmentation equation}
\label{sec:ss-frag}

The fragmentation equation is
\begin{subequations}
  \label{eq:frag}
  \begin{gather}
    \label{eq:frag-eq}
    \partial_t f_t
    = \LL f_t,
    \\
    \label{eq:frag-initial}
    f_0(x) = f_{in}(x) \qquad (x > 0),
  \end{gather}
\end{subequations}
which models a set of clusters undergoing fragmentation reactions at a
rate $b(x,y)$. In the study of the asymptotic behavior of $f_t$, one
usually restricts attention to fragmentation kernels which are
homogeneous of some degree $\gamma - 1$, with $\gamma > 0$:
\begin{equation}
  \label{eq:b-homogeneous}
  b(r x, r y) = r^{\gamma-1} b(x,y)
  \qquad (r > 0,\ x > y > 0),
\end{equation}
so $B(x) = B_0 x^\gamma$ for some $B_0 > 0$. Then one looks for
\emph{self-similar solutions}, this is, solutions of the form
\begin{equation}
  \label{eq:f-ss1}
  f_t(x) = (t+1)^{2/\gamma} G((t+1)^{1/\gamma} x),
\end{equation}
for some nonnegative function $G$. Observe that $f_0 = G$ in this
case. We omit the case $\gamma = 0$, as self-similar solutions have a
different expression in this case, and the asymptotic behavior of the
fragmentation equation is also different, and must be treated
separately. Such a remarkable function $G$ is called a self-similar
profile and is solution to the eigenvalue problem \eqref{eq:G} with
$a(x) = x$ and $\lambda = 1$. In this case one can easily show that in
fact $\lambda=1$ is the largest eigenvalue of the operator
 $g \mapsto -  \partial_x(x\, g) + \LL g$. Existence of solutions $G$ of
equation~\eqref{eq:G}in this setting has been studied in
\cite{MR2114413} and also in \cite{DG10}. The corresponding dual
equation \eqref{eq:phi} is explicitly solvable in this case, with
$\phi(x) = Cx$ for some normalization constant $C$.

The above suggests to define $g$ through the following change of
variables:
\begin{equation}
  \label{eq:frag-change1}
  f_t(x) = (t+1)^{2/\gamma}
  g\Big(
  \frac{1}{\gamma} \log(t+1), (t+1)^{1/\gamma} x
  \Big)
  \qquad (t,x > 0),
\end{equation}
or, writing $g_t$ in terms of $f$,
\begin{equation}
  \label{eq:frag-change1-inv}
  g_t(x) := e^{-2t} f(e^{\gamma t}-1, e^{-t} x)
  \qquad (t,x > 0).
\end{equation}
Then, $g_t$ satisfies the \emph{self-similar fragmentation equation}:
\begin{subequations}
  \label{eq:ss-frag}
  \begin{gather}
    \label{eq:ss-frag-eq}
    \partial_t g_t + x\,\partial_x g_t
    + 2 g_t = \gamma \, \LL g_t
    \\
    \label{eq:ss-frag-initial}
    g_0(x) = f_{in}(x) \qquad (x > 0).
  \end{gather}
\end{subequations}
We may redefine $b(x,y)$ to include the factor $\gamma$ in front of
$\LL g_y$, and omit $\gamma$ in the equation. Then, this equation is
of the form \eqref{eq:drift-frag} with $a(x) = x$ and $\lambda=1$. The
long time convergence \eqref{eq:n-converges} or
\eqref{eq:n-converges-exp} means here that generic solutions
asymptotically behave like the self-similar solution $(t+1)^{2/\gamma}
G((t+1)^{1/\gamma} x)$.

The problem of convergence to self-similarity for the fragmentation
equation \eqref{eq:frag} was studied in \cite{MR2114413}, and then in
\cite{citeulike:4065359}. Results on existence of self-similar
profiles and convergence of solutions to them, without a rate, are
available in \cite[Theorems 3.1 and 3.2]{MR2114413} and \cite[Theorems
3.1 and 3.2]{citeulike:4065359}, and are obtained through the use of
entropy methods. To our knowledge, no results on the rate of this
convergence were previously known.

\paragraph{Assumptions on the fragmentation coefficient}

We turn to the precise description of our results, for which we will
need the following hypotheses.

\begin{hyp}
  \label{hyp:b-basic}
  For all $x > 0$, $b(x,\cdot)$ is a nonnegative measure on the
  interval $[0,x]$. Also, for all $\psi \in
  \mathcal{C}_0([0,+\infty))$, the function $x \mapsto \int_{[0,x]}
  b(x,y) \psi(y) \,dy$ is measurable.
\end{hyp}

\begin{hyp}
  \label{hyp:number-of-pieces}
  There exists $\kappa > 1$ such that
  \begin{equation}
    \label{eq:number-of-pieces}
    \int_0^x b(x,y)\,dy = \kappa B(x)
    \qquad (x > 0).
  \end{equation}
\end{hyp}

\begin{hyp}
  \label{hyp:B-polynomial}
  There exist $0 < B_m \leq B_M$ and $\gamma > -1$ (for growth-fragmentation) or $\gamma > 0$
  (for self-similar fragmentation) such that
  \begin{equation}
    \label{eq:B-polynomial}
    B_m x^\gamma \leq B(x) \leq B_M x^\gamma
    \qquad (x > 0).
  \end{equation}
\end{hyp}

\begin{hyp}
  \label{hyp:b-nonconcentrated-at-0}
  There exist $\mu, C > 0$ such that for every $\epsilon > 0$,
  \begin{equation*}
    \int_0^{\epsilon x} b(x,y) \,dy
    \leq
    C \epsilon^\mu B(x)
    \qquad
    (x > 0).
  \end{equation*}
\end{hyp}
\begin{hyp}
  \label{hyp:b-nonconcentrated-at-1}
  For every $\delta > 0$ there exists $\epsilon > 0$ such that
  \begin{equation}
    \label{eq:b-nonconcentrated-at-1}
    \int_{(1-\epsilon)x}^x b(x,y) \,dy
    \leq \delta B(x)
    \qquad (x > 0).
  \end{equation}
\end{hyp}
Both Hypothesis \ref{hyp:b-nonconcentrated-at-0} and
\ref{hyp:b-nonconcentrated-at-1} are ``uniform integrability''
hypotheses: they say that the measure $b(x,y)\,dy$ is not concentrated
at $y=0$ or $y=x$, in some quantitative uniform way for all $x >
0$. For some parts of our results we will also need the following
hypothesis.
\begin{hyp}
  \label{hyp:b-bounded-above}
  The measure $b(x,\cdot)$ is (identified with) a function on
  $[0,x]$, and there exists $P_M > 0$ (actually, it must be $P_M \geq
  2$ due to the definition of $B$) such that
  \begin{equation}
    \label{eq:b-bounded-above}
    b(x,y)
    \leq
    P_M \frac{B(x)}{x}
    \qquad (x > y > 0).
  \end{equation}
\end{hyp}

\begin{hyp}
  \label{hyp:b-bounded-below}
  There exists $P_m > 0$ such that
  \begin{equation}
    \label{eq:b-bounded-below}
    b(x,y) \geq P_m \frac{B(x)}{x}
    \qquad (x > y > 0).
  \end{equation}
\end{hyp}

We list the above hypothesis in a rough order of least to most
restrictive. When taken all together they can be summarized in an
easier way: Hypotheses \ref{hyp:b-basic}--\ref{hyp:b-bounded-below}
are equivalent to assuming Hypothesis \ref{hyp:b-basic},
\ref{hyp:number-of-pieces} and that there exists $0 <B_m < B_M$
satisfying
\begin{equation}
  \label{eq:hypotheses-simple}
  2 B_m\, x^{\gamma-1} \leq b(x,y) \leq 2 B_M\, x^{\gamma-1}
  \qquad (0 < y < x)
\end{equation}
for some $\gamma > -1$, or $\gamma > 0$ in the case of
growth-fragmentation. We give Hypotheses
\ref{hyp:b-basic}--\ref{hyp:b-bounded-below} instead of this simpler
statement for later reference: some of the results to follow use only
a subset of these hypotheses, and it is preferable not to use the more
restrictive conditions where they are not needed.

As an example of coefficients which satisfy all Hypotheses
\ref{hyp:b-basic}--\ref{hyp:b-bounded-below} we may take $b$ of
\emph{self-similar} form, i.e.,
\begin{equation}
  \label{eq:b-selfsimilar}
  b(x,y) := x^{\gamma-1}\, p\left(\frac{y}{x}\right)
  \qquad (x > y > 0),
\end{equation}
where $p:(0,1) \to (P_m,P_M)$ is a function bounded above and below,
and $\gamma$ within the specified range (for the main theorem below
one must have $\gamma \in (0,2]$ for self-similar fragmentation, and
$\gamma \in (0,2)$ for growth-fragmentation). A bit more generally, one
can take
\begin{equation}
  \label{eq:b-selfsimilar-2}
  b(x,y) := h(x)\, x^{\gamma-1}\, p\left(\frac{y}{x}\right)
  \qquad (x > y > 0),
\end{equation}
with $h(x): (0,+\infty) \to [B_m,B_M]$ a function bounded above and
below.

\paragraph{Previous results}

We recall the following result, readily deduced from \cite[Theorem 1
and Lemma 1]{DG10} and \cite[Theorem 3.1]{MR2114413}:

\begin{thm}
  \label{thm:existence-eigenproblem}
  Assume Hypotheses
  \ref{hyp:b-basic}--\ref{hyp:b-nonconcentrated-at-0}. There exists a
  unique triple $(\lambda, G, \phi)$ with $\lambda > 0$, $G \in L^1$
  and $\phi \in W^{1,\infty}_{loc}$ which satisfy
  (\ref{eq:G}) and (\ref{eq:phi}) (in the sense of distributional
  solutions for (\ref{eq:G-eq}), and of a.e. equality for
  (\ref{eq:phi-eq})).

  In addition,
  \begin{gather}
    \label{eq:G-phi-positive}
    G(x) > 0, \quad \phi(x) > 0
    \qquad (x > 0),
    \\
    \label{eq:G-moments}
    \text{for all } \alpha > 0,
    \quad
    x \mapsto x^\alpha a(x) G(x) \in W^{1,1}(0,+\infty).
  \end{gather}
  
  In the case of the growth-fragmentation equation ($a(x) \equiv 1$)
  it holds that $\phi(0) > 0$.

  In the case of the self-similar fragmentation equation ($a(x) = x$),
  assume in addition that, for some $k < 0$ and $C > 1$,
  \begin{equation}
    \label{eq:small-moment-b}
    \int_0^x y^k b(x,y) \,dy \leq C\, x^k B(x)
    \qquad (x > 0).
  \end{equation}
  Then,
  \begin{equation}
    \label{eq:small-moments-of-G-finite}
    \int_0^\infty x^{k} G(x) \,dx < +\infty.
  \end{equation}
\end{thm}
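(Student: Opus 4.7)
The plan is to piece together the statement from the two cited papers, adding only short supplementary arguments for the items not stated verbatim there. First I would invoke \cite[Theorem 1 and Lemma 1]{DG10}, which under Hypotheses \ref{hyp:b-basic}--\ref{hyp:b-nonconcentrated-at-0} produces a unique triple $(\lambda, G, \phi)$ with $\lambda > 0$, $G \in L^1$, $\phi \in W^{1,\infty}_{loc}$ solving the primal and dual eigenproblems, together with the strict positivity \eqref{eq:G-phi-positive} and the moment regularity \eqref{eq:G-moments}. This covers both cases $a(x) \equiv 1$ and $a(x) = x$; for the latter one can alternatively read the triple off from \cite[Theorem 3.1]{MR2114413}, where $\lambda = 1$ is computed explicitly and the dual profile is identified as $\phi(x) = Cx$.

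Next I would establish $\phi(0) > 0$ in the growth-fragmentation case. With $a \equiv 1$ the dual eigenequation \eqref{eq:phi-eq} becomes the first-order linear ODE
\[ \partial_x \phi = (B + \lambda)\,\phi - \LL_+^{*} \phi, \]
and by \eqref{eq:defDualLL+} we have $\LL_+^{*} \phi(0) = 0$. If $\phi(0)$ were zero, Grönwall uniqueness would force $\phi \equiv 0$ on a right-neighbourhood of the origin, contradicting \eqref{eq:G-phi-positive}. This is essentially \cite[Lemma 1]{DG10}.

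Finally I would derive the small negative moment bound \eqref{eq:small-moments-of-G-finite} in the self-similar case. Here $a(x) = x$ and $\lambda = 1$, so the stationary equation reads $(xG)' + G = \LL_+ G - B G$. Testing against $x^k \chi_\varepsilon(x)$, with $\chi_\varepsilon$ a smooth cut-off supported in $[\varepsilon, 1/\varepsilon]$, integrating by parts, applying Fubini in the $\LL_+$ term, and using the hypothesis \eqref{eq:small-moment-b} on the resulting inner integral, one obtains after letting $\varepsilon \to 0$ the closed inequality
\[ (1-k) \int_0^\infty x^k G(x)\,dx \le (C-1) \int_0^\infty x^k B(x)\,G(x)\,dx \le (C-1)\, B_M \int_0^\infty x^{k+\gamma} G(x)\,dx. \]
The right-hand side is finite because \eqref{eq:G-moments} together with $\|G\|_{L^1}=1$ controls every nonnegative moment of $G$; if $k+\gamma < 0$ one iterates the inequality a finite number of times until the moment being used is nonnegative.

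The main technical obstacle is the last step: the test function $x^k$ is singular at $x=0$ precisely where we only have the weak boundary information $x G(x) \to 0$ from \eqref{eq:G-boundary}. The cut-off $\chi_\varepsilon$ has to be chosen so that the boundary contribution at $0$ takes the form $\varepsilon^{k+1} G(\varepsilon)$, and one must exploit that boundary condition together with monotone convergence on $[\varepsilon,\infty)$ to drop it in the limit. Everything else is either directly invoked from the cited references or follows from the elementary ODE argument of the second step.
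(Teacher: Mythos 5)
Your overall route is the same as the paper's: Theorem \ref{thm:existence-eigenproblem} is presented there as a recalled result, with existence, uniqueness, positivity, \eqref{eq:G-moments} and $\phi(0)>0$ all imported from \cite[Theorem 1 and Lemma 1]{DG10} and \cite[Theorem 3.1]{MR2114413}, and with the sole remark that \eqref{eq:small-moments-of-G-finite} follows by repeating the proof of \cite{MR2114413} using only \eqref{eq:small-moment-b}. So the only place where you actually supply an argument is the negative-moment bound, and there your treatment of the origin has a genuine gap. Testing the stationary equation against $x^k\chi_\eps$ with $\chi_\eps$ a cut-off supported in $[\eps,1/\eps]$ produces, as you say, a contribution of the form $\eps^{k+1}G(\eps)$, and it enters with the \emph{unfavorable} sign: one gets
$(1-k)\int_\eps^\infty x^k G \le \eps^{k+1}G(\eps) + (C-1)\int_\eps^\infty x^k B\,G$.
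The boundary condition \eqref{eq:G-boundary} only gives $\eps G(\eps)\to 0$, whereas $\eps^{k+1}G(\eps)=\eps^{k}\cdot\eps G(\eps)$ with $\eps^{k}\to+\infty$ since $k<0$; under Hypotheses \ref{hyp:b-basic}--\ref{hyp:b-nonconcentrated-at-0} alone there is no pointwise control of $G$ near $0$ that kills this term, so monotone convergence cannot be invoked to drop it. This is precisely the obstacle you flag, but the cut-off you propose does not overcome it.

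The fix is to flatten the test function at the origin rather than truncate: take $\psi_\eps(x):=\max(x,\eps)^k$, which is bounded, satisfies $\psi_\eps'=k\,x^{k-1}\1_{x>\eps}$, and makes the boundary term at $0$ equal to $\eps^{k}\,xG(x)\big|_{x=0}=0$ by \eqref{eq:G-boundary}. Since $\psi_\eps(x)\le x^k$ for all $x$, the Fubini term is still controlled by \eqref{eq:small-moment-b} (for $y\le\eps$ one uses Hypothesis \ref{hyp:number-of-pieces} instead), and one obtains $-k\int_\eps^\infty x^k G \le C'\int_0^\infty x^{k+\gamma}G$ uniformly in $\eps$, after which monotone convergence closes the argument. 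Two further small points: (i) your iteration when $k+\gamma<0$ uses \eqref{eq:small-moment-b} with exponent $k+j\gamma$, which is not assumed but does follow from the case $k$, since $\int_0^x y^{k+j\gamma}b(x,y)\,dy\le x^{j\gamma}\int_0^x y^k b(x,y)\,dy$ for $y\le x$; (ii) the finiteness of the nonnegative moments of $G$ that you invoke on the right-hand side is indeed guaranteed by \eqref{eq:G-moments}. Alternatively, one can avoid all of this by proving the estimate along the approximating profiles used to construct $G$ in \cite{MR2114413} and passing to the limit by Fatou, which is what that reference actually does.
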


The above result can actually be proved under weaker hypotheses on the
fragmentation coefficient $b$ (see \cite{DG10}) but we will only use
the given version. We note that (\ref{eq:small-moments-of-G-finite})
is derived in \cite{MR2114413} under more restrictive conditions on
$b$ (in particular, it is asked there that $b$ is of the self-similar
form (\ref{eq:b-selfsimilar})); however, the same proof can be
followed just by using (\ref{eq:small-moment-b}), and we omit the
details here.

\paragraph{Main results}
\label{sec:main-results}

The following theorem gathers our main results in this work.

\begin{thm}\label{thm:main} 
  Consider equation \eqref{eq:drift-frag} in the self-similar
  fragmentation case ($a(x)=x$) or the growth-fragmentation case
  ($a(x) = 1$), and assume Hypotheses
  \ref{hyp:b-basic}--\ref{hyp:b-bounded-below}. Also,
  \begin{itemize}
  \item For self-similar fragmentation, assume $\gamma \in (0,2]$.
  \item For growth-fragmentation, assume $\gamma \in (0,2)$ or
    alternatively that for some $B_b > 0$,
    \begin{equation}
      \label{eq:b-specific}
      b(x,y) = \frac{2B_b}{x}
      \qquad (x > y > 0).
    \end{equation}
  \end{itemize}
  Denote by $G > 0$ the asymptotic profile (self-similar profile in
  the first case, first eigenfunction in the second case) as well as
  by $\phi$ the first dual eigenfunction ($\phi(x) = x$ in the first
  case). These equations have a spectral gap in the space
  $L^2(\phi\,G^{-1}dx)$.

  \begin{enumerate}
  \item More precisely, there exists $\beta > 0$ such that
$$
\forall \, g \in X, \qquad H_2(g|G) \le {1 \over 2 \, \beta} \,
D^b(g|G),
$$
(the right hand term being finite or not), where we have defined
$$
H := L^2(G^{-1} \, \phi), \qquad X := \Bigl\{ g \in H; \,\,
\int_0^\infty g \, \phi = \int_0^\infty G \, \phi = 1\Bigr\}.
$$
Consequently for any $g_{in} \in X$ the solution $g \in C([0,\infty);
L^1_\phi)$ to equation \eqref{eq:drift-frag} satisfies
$$
\forall \, t \ge 0 \qquad \| g_t - G \|_H \le e^{- 2 \, \beta \, t}
\, \| g_{in} - G \|_H.
$$

\item For this second part, in the case of growth-fragmentation, we
  need to assume additionally that $\gamma \in (0,1)$ (no additional
  assumption is needed for the self-similar fragmentation case).
  There exists $\bar k = \bar k (\gamma,P_M) \ge 3$ and for any $a \in
  (0, \beta)$ and any $k > \bar k$ there exists $C_{a,k} \ge 1$ such
  that for any $g_{in} \in \HH := L^2(\theta)$, $\theta (x) = \phi(x)
  + x^{k}$, $\int \phi g_{in} = 1$, there holds:
  $$
  \forall \, t \ge 0 \qquad \| g_t - G \|_{\HH} \le C_{a,k} \, e^{-a \,
    t} \, \| g_{in} - G \|_\HH.
  $$
\end{enumerate}
\end{thm}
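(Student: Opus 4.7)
The theorem splits into two assertions and I would treat them in turn. For (1) the goal is to establish the entropy--entropy dissipation inequality $H_2[g|G]\le \tfrac{1}{2\beta}\,D^b[g|G]$ on $X$; once this is in hand, combining the general relative entropy identity \eqref{eq:gre2} with Gr\"onwall's lemma gives the stated exponential decay in $H$, since $\|g-G\|_H^2 = H_2[g|G]$ by \eqref{eq:defH2-2}. For (2), I would propagate this $H$-spectral gap to the larger space $\HH = L^2(\phi + x^k)$ by the splitting technique alluded to in the abstract (a Gualdani--Mischler--Mouhot-type argument): decompose the generator as $\Lambda = \AA + \BB$ with $\BB$ hypodissipative in $\HH$ with rate $a<\beta$, and $\AA$ a regularizing operator from $\HH$ into $H$, and then transfer the decay by iterated Duhamel.

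\textbf{The Poincar\'e inequality.} Setting $u = g/G$, the normalization in $X$ reads $\int(u-1)\phi G\,dx = 0$, so the target is a weighted Poincar\'e inequality for the probability measure $d\mu = \phi\,G\,dx$ with respect to the nonlocal Dirichlet form $D^b$. My strategy is to prove first a local version on a compact interval $[r,R]\subset(0,\infty)$ and then control the contributions from $(0,r)$ and $(R,\infty)$. Locally, Hypothesis~\ref{hyp:b-bounded-below} gives $b(y,x)\ge P_m B(y)/y$, and $\phi, G$ are bounded above and below by positive constants on $[r,R]$ (Theorem~\ref{thm:existence-eigenproblem}), so the local estimate reduces to a standard Poincar\'e--Wirtinger inequality for mean-zero functions on an interval. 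For the inner tail $(0,r)$, Hypothesis~\ref{hyp:b-nonconcentrated-at-0} lets me relate the values of $u$ at small $x$ to jumps originating from larger $y$, and one bounds $\int_0^r \phi G(u-1)^2\,dx$ by $D^b$ plus a small multiple of the global $H_2$ that can be absorbed. For the outer tail $(R,\infty)$, the fact that $B(y)\sim y^\gamma\to\infty$ makes the dissipation coefficient large and, together with Hypothesis~\ref{hyp:b-nonconcentrated-at-1}, allows the outer part of $H_2$ to be absorbed into $D^b$. The admissible ranges of $\gamma$ reflect precisely how much tail growth of $\phi G$ this absorption can tolerate.

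\textbf{Enlargement of the spectral gap.} For (2) I would use the splitting
\[
\Lambda g = -\partial_x(a\,g) + \LL_+ g - (B+\lambda)g = \AA g + \BB g,
\]
where $\AA g(x) = \int_x^{\infty} \chi(y)\,b(y,x)\,g(y)\,dy$ and $\chi$ is a smooth cutoff supported in a compact annulus $[R,M]\subset(0,\infty)$. Two properties must be checked: (i) $\BB$ generates a semigroup on $\HH = L^2(\phi+x^k)$ with exponential decay rate $a$, via an $L^2(\phi+x^k)$-energy estimate in which the coefficient $-(B+\lambda)$ plus the (truncated-away) remainder of $\LL_+$ is pointwise negative outside the annulus, using Hypotheses~\ref{hyp:B-polynomial}--\ref{hyp:b-bounded-above}; this is also where the weight exponent $k>\bar k$ and, in the growth-fragmentation case, the restriction $\gamma\in(0,1)$ enter, both being needed to absorb the transport term $\partial_x g$ after integration by parts against $\phi+x^k$. (ii) $\AA$ is bounded from $\HH$ into $H$, which follows from Hypothesis~\ref{hyp:b-bounded-above} and the positive upper and lower bounds of $G$ on $[R,M]$. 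With (i)--(ii), the Duhamel identity $S_\Lambda = S_\BB + S_\BB \ast (\AA\, S_\Lambda)$, iterated a finite number of times, transfers the decay of $S_\Lambda$ in $H$ provided by part~(1) to a decay of rate $a$ in $\HH$, with the constant $C_{a,k}$ absorbing an arbitrarily small loss $\beta-a$.

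\textbf{Main obstacle.} The hardest part is the tail analysis in the Poincar\'e inequality of (1). The measure $\phi G$ has nontrivial mass near both endpoints, the Dirichlet form is nonlocal, and $b$ is only controlled up to positive constants by $B(y)/y$; balancing the absorption arguments near $0$ and $\infty$ against the local Poincar\'e is delicate, and is exactly the step that fixes the sharp ranges $\gamma\in(0,2]$ (self-similar) and $\gamma\in(0,2)$ (growth-fragmentation). By contrast, the enlargement step is essentially a verification of the hypotheses of an abstract machinery, and its only subtle ingredient is the calibration of the weight exponent $k$ and the $\gamma\in(0,1)$ restriction dictated by integrating the transport term against $x^k$.
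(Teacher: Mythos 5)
Your part 1 contains a genuine gap, and it sits exactly at the hardest point of the proof. Whether you reduce to a quadratic-form comparison via your mean-zero normalization or via the paper's exact identity $H_2[g|G]=D_2[g|G]$ (Lemma \ref{lem:doubling1}), what must ultimately be controlled is, for each pair $x<y$, the kernel $\phi(x)G(x)\,\phi(y)G(y)$ against the dissipation kernel $\phi(x)G(y)\,b(y,x)$, i.e.\ one needs $G(x)\phi(y)\le K\,b(y,x)$. For $y$ comparable to $x$, or both in a bounded set, this holds (Hypotheses \ref{hyp:B-polynomial} and \ref{hyp:b-bounded-below} plus the upper bounds on $G$), which covers your local Poincar\'e region and the inner tail. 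But for the long-range pairs $y\gg x$ it \emph{fails} for every $\gamma<2$: Hypotheses \ref{hyp:B-polynomial} and \ref{hyp:b-bounded-above} give $b(y,x)\le P_MB_M\,y^{\gamma-1}$, whereas $\phi(y)=y$ (self-similar case) or $\phi(y)\ge C_k y^{k}$ for any $k<1$ (growth-fragmentation case), so the ratio $G(x)\phi(y)/b(y,x)$ is at least of order $y^{2-\gamma}G(x)$, which diverges as $y\to\infty$ for $x$ fixed in the bulk. Your stated mechanism for the outer tail --- that $B(y)\to\infty$ makes the dissipation coefficient large enough to absorb it --- points in the wrong direction: the dissipation coefficient grows only like $y^{\gamma-1}$ while the entropy-side one grows like $\phi(y)$, and no pointwise absorption is available; moreover the ``small multiple of the global $H_2$ to be absorbed'' carries no smallness parameter. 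The missing idea, which is the paper's main contribution (Lemma \ref{lem:gmain}, inspired by Diaconis--Stroock), is to rewrite $u(x)-u(y)=(u(x)-u(z))+(u(z)-u(y))$ for $y>2Rx$, \emph{average} over intermediate sizes $z\in[y/(2R),y/R]$, apply Cauchy--Schwarz, and then use the superexponential decay of $G$ through the tail condition \eqref{eq:N-tails}, $\int_{Rz}^\infty\phi G\le K\,G(z)$, to convert long-range pairs into short-range ones with a coefficient $\zeta(y)/y$ that $b$ does dominate (condition \eqref{eq:eedi-b2}). Without this routing through intermediate reactions (or an equivalent), your outer-tail estimate cannot close for $\gamma<2$, which is precisely the range the theorem is about.

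Your part 2 is essentially the paper's argument: the same enlargement theorem of Gualdani--Mischler--Mouhot, with $\BB$ shown dissipative in $L^2(\phi+x^k)$ by a weighted energy estimate. Two minor discrepancies: the paper takes $\AA$ to be multiplication by $M\,\1_{[0,R]}$ rather than a truncated gain operator (either choice can be made to work), and the restrictions $k>\bar k$ and $\gamma\in(0,1)$ (growth-fragmentation) come not from the transport term --- which after integration by parts combines harmlessly with the dual eigenvalue equation --- but from the Cauchy--Schwarz/Hardy-type bound on the gain term $\int(\LL_+g)\,g\,x^m\,dx$, where one needs an exponent $a$ with $1<a<2-\gamma$.
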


  The main improvement of this result is that it allows us to treat
  total fragmentation rates $B$ which are not bounded, for which no
  results were available. Also, the result is obtained through an
  entropy-entropy dissipation inequality, which is not strongly tied
  to the particular form of the equation, and may be useful in
  related equations. We also observe that this is to our knowledge the
  first result on rate of convergence to self-similarity for the
  fragmentation equation.

  In our result we include just one case where $B$ is a constant, with
  $b$ given by the particular form \eqref{eq:b-specific} in the case
  of growth-fragmentation. The difficulty in allowing for more general
  $b$ (with $B$ still constant) is not in the entropy-entropy
  dissipation inequality, but in the estimates for the solution $\phi$
  of the dual eigenproblem \eqref{eq:phi}. The estimates in section
  \ref{sec:phi-bounds} are not valid in this case, and in fact one
  expects that $\phi$ should be bounded above and below between two
  positive constants. However, we are unable to prove this at the
  moment, and anyway the estimates of $\phi$ are not the main aim of
  the present paper. Consequently, we state the result only for the
  $b$ in \eqref{eq:b-specific}, for which $\phi$ is explicitly given
  by a constant. (We notice that for self-similar fragmentation, the
  estimates on $G$ fail for $\gamma = 0$, so this case is not
  included).

  On the other hand, the positivity condition
  \eqref{eq:b-bounded-below} is strong, and makes this result not
  applicable to some cases of interest, such as the mitosis case
  mentioned at the end of section \ref{sec:gf}. Notice that, as
  remarked at the end of section 2 in \cite{citeulike:4108090},
  entropy-entropy dissipation inequalities are actually false in this
  case, which shows that a different method is required for this and
  similar situations where the fragmentation coefficient is ``sparse''
  enough for the inequalities to fail.
  
\paragraph{Strategy}

The main difficulty for establishing Theorem~\ref{thm:main}
 lies in proving point 1, while point 2 is a consequence of point 1 and of
a method
for enlarging the functional space of decay estimates on semigroups
from a ``small" Hilbert space to a larger one recently obtained in
\cite{GMM**}.

The strategy we follow to prove inequality \eqref{eq:EDEineq} is
inspired by a paper of Diaconis and Stroock \cite{citeulike:81481}
which describes a technique to find bounds of the spectral gap of a
finite Markov process. Equation \eqref{eq:drift-frag} can be
interpreted as the evolution of the probability distribution of an
underlying continuous Markov process, and as such can be thought of as
a limit of finite Markov processes. The expression of the entropy and
entropy dissipation has many similarities to the finite case, and
hence the ideas used in \cite{citeulike:81481} can be extended to our
setting. The basic idea is that one may improve an inequality in the
finite case by appropriately choosing, for each two points $i$ and $j$
in the Markov process, a chain of reactions from $i$ to $j$ which has
a large probability of happening, measured in a particular way. In our
case this corresponds to choosing a chain of reactions that can give
particles of size $x$ from particles of size $y$, and which has a
better probability than just particles of size $y$ fragmenting at rate
$b(y,x)$ to give particles of size $x$. In fact, we need the
additional observation that one can choose many different paths from
$y$ to $x$, and then average among them to improve the
probability. These heuristic ideas are made precise in the proof of
point 1 in Theorem \ref{thm:main}, given in section
\ref{sec:edi}. This suggests that the same techniques may be
applicable to other linear evolution equations which can interpreted
as the probability distribution of a Markov process.

In the context of the Boltzmann equation, an idea with some common
points with this one was introduced in \cite{citeulike:7180223} to
prove a spectral gap for the the linearized Boltzmann
operator. However, the inequality needed in that case does not have
the same structure since the linearized Boltzmann operator does not
come from a Markov process (it does not conserve positivity, for
instance), and the ``reactions'' to be taken into account there are
not jumps from one point to another, but collisions between two
particles at given velocities, to give two particles at different
velocities. Nevertheless, the geometric idea does bear some
resemblance, and the connection to techniques developed for the study
of finite Markov processes seems interesting.

\smallskip Concerning the proof of point 2, we mainly have to show
that the operator involved in the fragmentation equation decomposes as
$\AA + \BB$ where $\AA$ is a bounded operator and $\BB$ is a coercive
operator in the large Hilbert space $\HH$.  Then, for a such a kind of
operator, the result obtained in \cite{GMM**} ensures that the
operator in the large space $\HH$ inherits the spectral properties and
the decay estimates of the associated semigroup which are true in the
small space $H$. The additional restriction $\gamma \in (0,1)$ in the
extension of the spectral gap comes from the fact that we were unable
to prove the coercivity of the operator $\BB$ for $\gamma \in [1,2)$.

In the next section we prove the functional inequality
\eqref{eq:EDEineq}. In sections \ref{sec:ssf-bounds} and
\ref{sec:gf-bounds} we prove upper and lower bounds on the profiles
$G$ and $\phi$, solution of the eigenproblem
\eqref{eq:G}--\eqref{eq:phi}; section \ref{sec:ssf-bounds} is
dedicated to the self-similar fragmentation equation (the case $a(x) =
x$), while section \ref{sec:gf-bounds} deals with the
growth-fragmentation equation (the case $a(x)=1$). Finally, section
\ref{sec:proof-main} uses these results and the techniques in
\cite{GMM**} to complete the proof of Theorem \ref{thm:main}.

\section{Entropy dissipation inequalities}
\label{sec:edi}

In order to study the speed of convergence to equilibrium, in terms of
the entropy functional, we are interested in proving the
entropy-entropy dissipation inequality \eqref{eq:EDEineq}:
\begin{equation*}
  \qquad H_2[g|G] \le {1 \over 2 \, \beta} \, D^b[g|G].
\end{equation*}
We begin with the following basic identity:

\begin{lem}
  \label{lem:doubling1} Take nonnegative measurable functions $G,\phi
  : (0,\infty) \to \R_+$ such that
  \begin{equation}
    \label{eq:phiN-int}
    \int_0^\infty \phi(x) \, G(x) \,dx = 1.
  \end{equation}
Defining 
\begin{equation}  \label{eq:defD2}
D_2\left[g|G\right] := \int_0^\infty \int_x^\infty \phi(x) \, G(x) \, \phi(y) \, G(y) \, (u(x) - u(y))^2 \, dydx
\end{equation}
with  $u(x)=\frac{g(x)}{G(x)}$, there holds  
\begin{equation}  \label{eq:eedi1}
H_2\left[g|G\right] = \, D_2\left[g|G\right]
\end{equation}
  for any nonnegative measurable function $g: (0,\infty) \to \R$ such that
  $\int  \phi\,g = 1$.
\end{lem}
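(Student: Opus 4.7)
My plan is to prove the identity by direct algebraic manipulation, exploiting the symmetry of the integrand in $D_2[g|G]$ together with the two normalization conditions $\int \phi G = 1$ and $\int \phi g = 1$.

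First I would observe that the integrand $\phi(x) G(x) \phi(y) G(y) (u(x) - u(y))^2$ is symmetric under the exchange $x \leftrightarrow y$. Hence, restricting the $y$-integral to $y > x$ and then symmetrizing gives
\[
D_2[g|G] = \frac{1}{2} \int_0^\infty \int_0^\infty \phi(x) G(x) \phi(y) G(y) \, (u(x) - u(y))^2 \, dy \, dx.
\]

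Next I would expand the square as $u(x)^2 - 2u(x)u(y) + u(y)^2$ and use Fubini to separate the double integral into products of single integrals. Using $\int \phi G \, dx = 1$ from \eqref{eq:phiN-int} and $\int \phi G u \, dx = \int \phi g \, dx = 1$ (the hypothesis on $g$), the cross term evaluates to $-2 \cdot 1 \cdot 1 = -2$ and the two square terms each give $\int \phi G u^2 \, dx$. Thus
\[
D_2[g|G] = \int_0^\infty \phi(x) G(x) u(x)^2 \, dx \, - \, 1.
\]

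Independently, expanding $(u-1)^2 = u^2 - 2u + 1$ in the definition of $H_2$ yields
\[
H_2[g|G] = \int_0^\infty \phi G u^2 \, dx - 2 \int_0^\infty \phi g \, dx + \int_0^\infty \phi G \, dx = \int_0^\infty \phi G u^2 \, dx - 1,
\]
again using the two normalization conditions. Comparing the two expressions gives \eqref{eq:eedi1}.

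There is no real obstacle here: the statement is a bookkeeping identity, whose only subtlety is checking that the symmetrization argument is valid. Since all integrands are nonnegative, Tonelli's theorem applies regardless of whether the integrals are finite, and the identity therefore holds in $[0,+\infty]$; in particular, $H_2[g|G]$ is finite if and only if $D_2[g|G]$ is.
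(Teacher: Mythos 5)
Your proof is correct and follows essentially the same route as the paper: symmetrize $D_2$ to a full double integral with a factor $\tfrac12$, expand the squares, and use the two normalizations $\int \phi G = \int \phi g = 1$ to reduce both sides to $\int \phi G u^2\,dx - 1$. The added remark on Tonelli and the validity of the identity in $[0,+\infty]$ is a minor refinement the paper leaves implicit, but it does not change the argument.
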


\begin{proof}
  With the above notations,  by a simple expansion of the squares,
we find that,
\begin{eqnarray*}
 H_2\left[g|G\right]& = &\int_0^\infty \left( u(x) - 1\right)^2 G(x) \, \phi(x) \,dx  = 
\int_0^\infty u(x)^2 G(x) \, \phi(x) \,dx-1,
\end{eqnarray*}
while expanding $D_2[g|G]$ we find
\begin{eqnarray*}
D_2\left[g|G\right]
&=& \frac{1}{2}\int_0^\infty \int_0^\infty G(x) \, \phi(x) G(y) \, \phi(y)
    \left( u(x) - u(y) \right)^2 \,dy \,dx
\\
&= &
\frac{1}{2}\left(2\int_0^\infty   u(x)^2 G(x) \, \phi(x) 
 \,dx-2
\right),
\end{eqnarray*}
which gives the same result.
\end{proof}

\medskip
Taking into account the above result, in order to prove
\eqref{eq:EDEineq} it is enough to prove the following inequality for
some constant $C > 0$:
\begin{equation}
  \label{eq:eedi2}
 D_2\left[g|G\right]   \leq
  C \, D^b\left[g|G\right].
\end{equation}
Of course, one gets \eqref{eq:eedi2} if one assumes that the function
appearing below the integral signs in the functional
$D_2\left[g|G\right]$ is pointwise bounded by the corresponding
function in the functional $D^b\left[g|G\right]$, or more precisely if
one assumes $G(x) \, \phi(y) \le C \, b(y,x)$ whenever $0 < x < y$.
However, the range of admissible rates $b$ for which such an
inequality holds seems to be very narrow.  For example, for the
self-similar rate \eqref{eq:b-selfsimilar}, one must impose $\gamma =
2$.

The cornerstone of the proof of the functional inequality
\eqref{eq:eedi2} lies in splitting $D_2\left[g|G\right]$ in two terms:
one for which the pointwise comparison holds and one where it does
not.  In the latter part, called $D_{2,2}[g|G]$ in the next lemma, the
idea is to break $u(x) - u(y)$ into ``intermediate reactions'' in
order to obtain a new expression of $D_2\left[g|G\right]$ for which
the pointwise estimate applies. The main part of the proof of
inequality \eqref{eq:eedi2} is then the bound for this second part of
$D_2\left[g|G\right]$.  And it is just the content of the following
result:

\begin{lem}
  \label{lem:gmain}
  Take measurable functions $\phi, G: (0,+\infty) \to \R_+$ with
  \begin{equation}
    \label{eq:normalization}
    \int_0^\infty G(x) \phi(x) \,dx = 1
  \end{equation}
  and such that, for some constants $K, M > 0$ and $R > 1$ and some
  function $\zeta : (R,\infty) \to [1,\infty)$,
  \begin{align}
      \label{eq:G-bounds}
      0 \le G(x) \leq K
      &\qquad (x > 0),
      \\
      \label{eq:N-tails}
           \int_{Rx}^\infty G(y) \phi(y) \,dy
      \leq
      K G(x)
      &\qquad (x > M),
      \\
      \label{eq:phi-comparable}
      \frac{1}{\zeta(y)}\phi(y) \leq K\,\phi(z)
      &\qquad (\max\{2RM,Rz\} < y < 2Rz).
  \end{align}
  Defining 
\begin{equation}\label{eq:defD22}
  D_{2,2}\left[g|G\right]
  :=
  \int_0^\infty \int_{\max \left\{2Rx,2RM\right\}}^\infty
  \phi(x) \, G(x) \, \phi(y) \, G(y) \, (u(x) - u(y))^2  \, dy\,dx
 \end{equation}
 there is some constant $C > 0$ such that
 \begin{equation}\label{eq:gedi}
   D_{2,2}\left[g|G\right]
   \le
   C \,
   \int_0^\infty \int_{\max\left\{x,M\right\}}^\infty
   {\zeta(y) \, y^{-1}} \, \phi(x)  \, G(y) \, (u(x) - u(y))^2
   \, dy\,dx,
 \end{equation}
 for all measurable functions $g : (0,\infty) \to \R_+$ (in the sense
 that if the right hand side is finite, then the left hand side is
 also, and the inequality holds).
\end{lem}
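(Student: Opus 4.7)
The plan is to insert an intermediate point and average. For every pair $(x,y)$ in the integration region of $D_{2,2}[g|G]$, one has $y > 2Rx$ and $y > 2RM$, so the interval $I(y) := (y/(2R),\, y/R)$ is contained in $(\max\{x,M\},\infty)$ and has length $y/(2R)$. I would use the triangle inequality $(u(x)-u(y))^2 \le 2(u(x)-u(z))^2 + 2(u(z)-u(y))^2$ for an arbitrary $z \in I(y)$ and then average in $z$ over $I(y)$ to get the pointwise bound
$$(u(x)-u(y))^2 \;\le\; \frac{4R}{y}\int_{y/(2R)}^{y/R} \bigl[(u(x)-u(z))^2 + (u(z)-u(y))^2\bigr]\,dz.$$
Substituting this into $D_{2,2}[g|G]$ splits the estimate into two pieces: a ``left'' piece $I$ built on $(u(x)-u(z))^2$ and a ``right'' piece $II$ built on $(u(z)-u(y))^2$, which I would treat separately.

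For piece $I$, I would use Fubini to bring the $z$-integration outside. For fixed $x,z$ the constraint $z \in I(y)$ forces $y \in (Rz,2Rz)$, which combined with $y > \max\{2Rx,2RM\}$ is non-empty precisely when $z > \max\{x,M\}$. On this range of $y$ the factor $1/y$ is at most $1/(Rz)$, so the inner $y$-integral of $\phi(y)G(y)/y$ is dominated by $(Rz)^{-1}\int_{Rz}^\infty \phi(y) G(y)\,dy \le K G(z)/(Rz)$ by hypothesis \eqref{eq:N-tails} (applicable since $z > M$). Combining with $G(x) \le K$ from \eqref{eq:G-bounds} and $\zeta(z) \ge 1$ yields a bound on piece $I$ of the form $4K^2$ times the right-hand side of \eqref{eq:gedi}, after relabelling $z$ as $y$.

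For piece $II$, I would instead integrate $x$ out first. The constraint $y > 2Rx$ restricts $x$ to $(0, y/(2R))$, and the $x$-integrand $\phi(x)G(x)$ has total mass $1$ by the normalization \eqref{eq:normalization}. What remains is an integral over $(z,y)$ with $y > 2RM$ and $z \in I(y)$ of $\phi(y)G(y)/y \cdot (u(z)-u(y))^2$. Since then $y \in (Rz,2Rz)$ with $y > 2RM$, the comparability hypothesis \eqref{eq:phi-comparable} applies and yields $\phi(y) \le K\zeta(y)\phi(z)$. This converts piece $II$ into at most $4RK$ times the right-hand side of \eqref{eq:gedi}, after relabelling $z$ as $x$. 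Adding the two contributions gives the lemma with $C = 4K^2 + 4RK$.

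The essential choice driving the proof is the averaging window $I(y)=(y/(2R),y/R)$: it sits strictly to the right of $x$ (so the pair $(x,z)$ in piece $I$ still lies in the target domain of \eqref{eq:gedi}) and it is precisely the range where \eqref{eq:phi-comparable} may be invoked in piece $II$. I do not anticipate a serious analytic obstacle beyond the careful bookkeeping of the Fubini domain in piece $I$; the nontrivial input, namely the dyadic tail control \eqref{eq:N-tails} and the comparability \eqref{eq:phi-comparable}, is exactly what makes each of the two resulting weights collapse onto $\zeta(y)G(y)/y$.
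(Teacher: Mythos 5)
Your proposal is correct and follows essentially the same argument as the paper: the same averaging of the intermediate point $z$ over $(y/(2R),y/R)$, the same Cauchy--Schwarz splitting into two pieces, with the first piece handled via Fubini, $1/y\le 1/(Rz)$, \eqref{eq:N-tails} and \eqref{eq:G-bounds}, and the second via the normalization \eqref{eq:normalization} and the comparability \eqref{eq:phi-comparable}. The bookkeeping of the Fubini domains and the resulting constant are all consistent with the paper's proof.
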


The point of this lemma is that the right hand side of
\eqref{eq:gedi} will be easily bounded by $D^b[g|G]$.

\begin{rem}
  Inequality \eqref{eq:gedi} is strongest when $\zeta(x) \equiv
  1$. Moreover, we expect \eqref{eq:phi-comparable} to be true when
  $\zeta$ is constantly $1$ (for $\phi$ the solution of the dual
  equation \eqref{eq:phi}, under some additional
  conditions). The reason that we allow for a choice of $\zeta$ is
  that we are not able to prove that \eqref{eq:phi-comparable} holds
  with $\zeta \equiv 1$, but only for a function $\zeta = \zeta(x)$
  which grows like a power $x^\epsilon$, with $\epsilon$ as small as
  desired.
\end{rem}

\begin{proof} We will denote by $C$ any constant which depends on
  $G$, $\phi$, $K$, $M$, or $R$, but not on $g$.

  \vspace{.3cm}
  \noindent
  \textbf{First step.} The idea is to break $u(x) - u(y)$ into
  ``intermediate reactions'': for $y > x$ and any $z \in [x,y]$, one
  obviously has
  \begin{equation*}
    u(x) - u(y) = u(x) - u(z) + u(z) - u(y).
  \end{equation*}
  We then average among a range of possible splittings. More
  precisely, for $y > 2Rx$,
  $$
  u(x) - u(y)
  =
  \frac{2R}{y}  \int_{y/(2R)}^{y/R}
  (    u(x) - u(z) + u(z) - u(y)  ) \,dz,
  $$
  and then by Cauchy-Schwarz's inequality,
  \begin{multline*}
    (u(x) - u(y))^2
    \leq
    \frac{2R}{y}
    \int_{y/(2R)}^{y/R}
    ( u(x) - u(z) + u(z) - u(y))^2 \,dz
    \\
    \leq
    \frac{4R}{y}
    \int_{y/(2R)}^{y/R}
    (u(x) - u(z))^2
    \,dz
    +
    \frac{4R}{y}
    \int_{y/(2R)}^{y/R}
    (u(z) - u(y))^2
    \,dz
    \\
    =:
    T_1(x,y) + T_2(y).
  \end{multline*}
  Using this, we have
  \begin{multline*}
   D_{2,2}\left[g|G\right]
       \leq
    \int_0^\infty \int_{\max\{2Rx,2RM\}}^{\infty}
    \phi(x) \phi(y) G(x) G(y)\, T_1(x,y) \,dy\,dx
    \\
    +
    \int_0^\infty \int_{\max\{2Rx,2RM\}}^{\infty}
    \phi(x) \phi(y) G(x) G(y)\, T_2(y) \,dy \,dx.
  \end{multline*}
  We estimate these terms in the next two steps.
  
\smallskip  \noindent
  \textbf{Second step.} For the term with $T_1$,
 \begin{equation*}
   \begin{split}
     &\int_0^\infty \int_{\max\{2Rx,2RM\}}^{\infty} \phi(x) \phi(y)
     G(x) G(y)\, T_1(x,y) \,dy \,dx
     \\
     &= C\, \int_0^\infty \int_{\max\{2Rx,2RM\}}^{\infty} \phi(x)
     \phi(y) G(x) G(y) \frac{1}{y} \int_{y/(2R)}^{y/R} (u(x) - u(z))^2
     \,dz \,dy \,dx
     \\
     &= C\, \int_0^\infty \int_{\max\{x, M\}}^{\infty}
     \phi(x) G(x) (u(x) - u(z))^2 \int_{\max\{2Rx, Rz, 2RM\}}^{2Rz}
     \frac{\phi(y)}{y} G(y) \,dy \,dz \,dx
     \\
     &\leq C\, \int_0^\infty \int_{\max\{x, M\}}^{\infty}
     \phi(x) G(x) (u(x) - u(z))^2 \frac{1}{Rz} \int_{Rz}^{\infty}
     \phi(y) G(y) \,dy \,dz \,dx
     \\
     &\leq C\, \int_0^\infty \int_{\max\{x, M\}}^{\infty}
     \phi(x) (u(x) - u(z))^2 \frac{1}{z} G(z) \,dz \,dx
   \end{split}
 \end{equation*}
 where we have used \eqref{eq:N-tails} and the fact that $G$ is
 uniformly bounded from \eqref{eq:G-bounds}.  With this series of
 inequalities we obtain that the last one integral is bounded by the
 right hand side in \eqref{eq:gedi}, as $\zeta(y) \geq 1$ for all
 $y$.

  \smallskip \noindent \textbf{Third step.} For the term with $T_2$,
  we use (\ref{eq:normalization}) and \eqref{eq:phi-comparable}. Note
  that \eqref{eq:phi-comparable} was not used before this point in the
  proof: $\zeta(y) \leq 1$ was enough up to now, but this is not the
  case in the following calculation:
  \begin{equation*}
    \begin{split}
      &\int_0^\infty \int_{\max\{2Rx,2RM\}}^{\infty} \phi(x)
      \phi(y) G(x) G(y)\, T_2(y) \,dy \,dx
      \\
      &= C \, \int_0^\infty \int_{\max\{2Rx,2RM\}}^{\infty} \phi(x)
      \phi(y) G(x) G(y) \frac{1}{y} \int_{y/(2R)}^{y/R} (u(z) -
      u(y))^2 \,dz
      \,dy \,dx
      \\
      &= C \, \int_{2RM}^\infty \int_{y/(2R)}^{y/R} \frac{1}{y}
      \phi(y) G(y) (u(z) - u(y))^2 \int_{0}^{y/(2R)} \phi(x) G(x) \,dx
      \,dz
      \,dy
      \\
      &\leq C \, \int_{2RM}^\infty \int_{y/(2R)}^{y/R} \frac{1}{y}
      \phi(y) G(y) (u(z) - u(y))^2 \,dz
      \,dy
      \\
      &= C \, \int_{M}^\infty \int_{\max\left\{2RM,Rz\right\}}^{2Rz}
      \frac{\zeta(y)}{y} \, \frac{\phi(y)}{\zeta(y)} \, \, G(y) \,
      (u(z) - u(y))^2
      \,dy\,dz
      \\
      &\leq C \, \int_0^\infty \int_{\max\left\{2RM,Rz\right\}}^{2Rz}
      \frac{\zeta(y)}{y} \phi(z) G(y) (u(z) - u(y))^2
      \,dy\,dz,
    \end{split}
  \end{equation*}
  where  the last integral is  bounded by
  the right hand side in  \eqref{eq:gedi} and 
  this finishes the proof.
\end{proof}

Once we have controlled the ``bad'' term in 
$D_2\left[g|G\right]$ we can reach  the objective of
this section: to obtain an 
entropy-entropy dissipation inequality. It is shown in
the following theorem:

\begin{thm}
  \label{thm:eedi-b}
  Assume the conditions in Lemma \ref{lem:gmain}, and also that the
  fragmentation coefficient $b$ satisfies, for the constants $K, M, R$
  in Lemma \ref{lem:gmain},
     \begin{align}
      \label{eq:eedi-b1}
     G(x) \, \phi(y) \leq K \,  b(y,x)
      &\qquad (0 < x < y < \max\{2Rx,2RM\}),
      \\ 
      \label{eq:eedi-b2}
      \zeta(y)\, y^{-1} \leq K \,  b(y,x)
        &\qquad (y > M,\ \, y > x > 0).
  \end{align}
 Then there is some constant $C > 0$ such that
     \begin{align}
    \label{eq:gedi-b}
   H_2\left[g|G\right] \leq C \, D^b\left[g|G\right]
  \end{align}
  for all measurable functions $g: (0,\infty) \to \R_+$  such that $\int_0^\infty g \, \phi = 1$ (in the sense that if the
  right hand side is finite, then the left hand side is also, and the
  inequality holds).
\end{thm}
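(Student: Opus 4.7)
The plan is to combine the identity $H_2[g|G] = D_2[g|G]$ from Lemma~\ref{lem:doubling1} with a splitting of $D_2[g|G]$ into a ``near diagonal'' piece (for which pointwise comparison with the fragmentation kernel works directly) and a ``far'' piece (which is exactly the functional $D_{2,2}[g|G]$ bounded in Lemma~\ref{lem:gmain}). Explicitly, symmetrizing $D_2[g|G]$ as in \eqref{eq:defD2} and writing the integration region $\{0<x<y\}$ as the disjoint union of
$$
A := \{(x,y): 0 < x < y < \max\{2Rx,2RM\}\}
\quad \text{and} \quad
A^c := \{(x,y): 0 < x,\ y > \max\{2Rx,2RM\}\},
$$
we decompose $D_2[g|G] = D_{2,1}[g|G] + D_{2,2}[g|G]$, where $D_{2,2}$ is exactly the quantity \eqref{eq:defD22}.

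For the near-diagonal piece $D_{2,1}[g|G]$, I would apply hypothesis~\eqref{eq:eedi-b1} pointwise to the integrand: since $G(x)\phi(y) \leq K b(y,x)$ on $A$, and $\phi, G \geq 0$,
$$
D_{2,1}[g|G] \leq K \int_0^\infty \int_A \phi(x)\, G(y)\, b(y,x)\, (u(x)-u(y))^2 \, dy\, dx \leq K\, D^b[g|G],
$$
the last step using that the integrand is nonnegative and the region $A$ is contained in $\{0<x<y\}$, which is the full domain of $D^b[g|G]$ as defined in \eqref{eq:defDb}. For the far piece, I invoke Lemma~\ref{lem:gmain}, whose hypotheses are by assumption in force, to obtain
$$
D_{2,2}[g|G] \leq C \int_0^\infty \int_{\max\{x,M\}}^\infty \zeta(y)\, y^{-1}\, \phi(x)\, G(y)\, (u(x)-u(y))^2 \, dy\, dx.
$$
Now hypothesis~\eqref{eq:eedi-b2} says $\zeta(y)\,y^{-1} \leq K\, b(y,x)$ on the region $\{y>M,\ y>x>0\}$, which contains the integration region above. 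Substituting gives $D_{2,2}[g|G] \leq CK\, D^b[g|G]$.

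Adding the two bounds, $D_2[g|G] \leq C'\, D^b[g|G]$ for some $C'>0$, and then Lemma~\ref{lem:doubling1} finishes the proof since $H_2[g|G] = D_2[g|G]$ whenever $\int g\,\phi = 1$ (the constant appearing in \eqref{eq:eedi1} forcing the normalization is consistent with the theorem's hypothesis on $g$). I do not see a genuine obstacle at this stage: the real work has been shifted into Lemma~\ref{lem:gmain} (the ``path averaging'' for $D_{2,2}$) and into the two tailor-made hypotheses \eqref{eq:eedi-b1}--\eqref{eq:eedi-b2}, which are exactly designed so that the two pieces of $D_2[g|G]$ are each pointwise dominated by a multiple of the $D^b$-integrand. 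The only minor point to be careful about is the sense of the inequality when the right-hand side is infinite (which is handled by stating, as in Lemma~\ref{lem:gmain}, that if $D^b[g|G] < \infty$ then so is each piece of $D_2[g|G]$, and the inequality holds).
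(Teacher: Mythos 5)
Your proposal is correct and follows essentially the same route as the paper: the identity $H_2=D_2$ from Lemma~\ref{lem:doubling1}, the split $D_2=D_{2,1}+D_{2,2}$ at $y=\max\{2Rx,2RM\}$, the pointwise use of \eqref{eq:eedi-b1} on the near piece, and Lemma~\ref{lem:gmain} combined with \eqref{eq:eedi-b2} on the far piece. No gaps.
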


\begin{proof} We split $D_{2}\left[g|G\right]$ as 
  $$
  D_2\left[g|G\right] = D_{2,1}\left[g|G\right] + D_{2,2}\left[g|G\right],
  $$
  with
  $$
  D_{2,1}\left[g|G\right] := 
  \int_0^\infty \int_x^{\max\left\{2Rx,2RM\right\}}
  \phi(x) \, G(x) \, \phi(y) \, G(y)
  \, (u(x) - u(y))^2 \,  
  dydx
  $$
  and $ D_{2,2}\left[g|G\right]$ defined by \eqref{eq:defD22}. On the
  one hand thanks to \eqref{eq:eedi-b1} we have
  \begin{equation}
    \label{ineq:D21Db}
    \begin{split}
      D_{2,1}\left[g|G\right]
      &\le
      \int_0^\infty
      \int_x^{\max\left\{2Rx,2RM \right\}} K \, b(y,x) \phi(x) \, G(y)
      \, (u(x) - u(y))^2 \, dydx
      \\
      &\le \, K \, D^b\left[g|G\right].
    \end{split}
  \end{equation}
On the other hand, thanks to inequality \eqref{eq:gedi} in
Lemma~\ref{lem:gmain} and \eqref{eq:eedi-b2} we have
\begin{eqnarray}
 \nonumber
 D_{2,2}\left[g|G\right]) 
 &\le&
 C \int_0^\infty \int_{\max\left\{x,M\right\}}^\infty \,
 \frac{\zeta(y)}{y} \, \phi(x)  \, G(y) \, (u(x) - u(y))^2  \, dydx
 \\
 \nonumber
 &\le&
 C \, K  \int_0^\infty \int_{\max\left\{x,M\right\}}^\infty
 \,
 b(y,x) \phi(x) \, G(y) \, (u(x) - u(y))^2  \, dydx
 \\     \label{ineq:D22Db}
 &\le& C \, K  \, D^b\left[g|G\right].
\end{eqnarray}
We conclude by gathering \eqref{ineq:D21Db} and \eqref{ineq:D22Db}. 
\end{proof}

This result will be the key to prove point 1 in Theorem
\ref{thm:main}, which will be done in section
\ref{sec:proof-point1}. In order to prove that \eqref{eq:G-bounds} and
\eqref{eq:eedi-b1} hold in our context, we need to establish some
upper bounds on $G$ and $\phi$ for our model cases. For
\eqref{eq:N-tails} and \eqref{eq:phi-comparable} we need to control
the asymptotic behavior (bounds from above and below) of the functions
$G(x)$ and $\phi(x)$ as $x \to \infty$. These upper and lower
estimates on $G$ and $\phi$ will be established in the next
sections. Finally, condition \eqref{eq:eedi-b2} simply imposes some
restrictions on the fragmentation rate (typically some restrictions on
the value of $\gamma$ for a self-similar fragmentation rate of the
form \eqref{eq:b-selfsimilar}).

\section{Bounds for the self-similar fragmentation equation}
\label{sec:ssf-bounds}

In order to apply Theorem \ref{thm:eedi-b} to the self-similar
fragmentation equation \eqref{eq:ss-frag} we need more precise bounds
than those proved in \cite{MR2114413,citeulike:4065359,DG10}; in
particular, we need $L^\infty$ bounds on the self-similar profile $G$
for condition \eqref{eq:G-bounds} to hold. We actually prove the
following accurate exponential growth estimate on the profile $G$.

\begin{thm}
  \label{BdsSSFeq}
  Assume Hypotheses
  \ref{hyp:b-basic}--\ref{hyp:b-nonconcentrated-at-1} on the
  fragmentation coefficient $b$. Call $\Lambda(x) := \int_0^x
  \frac{B(s)}{s} \,ds$.
  \begin{enumerate}
  \item \label{it:Ma-N->delta} For any $\delta > 0$ and any $a \in (0,
    B_m/B_M)$, $a' \in (1,+\infty)$ there exist constants $C' =
    C'(a',\delta)$, $C = C(a,\delta) > 0$ such that
    \begin{equation} 
      \label{eq:Ma-N->delta}
      C'\, e^{-a' \Lambda(x)}
      \leq G(x)
      \leq
      C\, e^{-a\, \Lambda(x)}
      \quad \text{ for } x > \delta.
    \end{equation}
  \item \label{it:Ma-N} Assume additionally Hypothesis
    \ref{hyp:b-bounded-above}. Then one may take $C$ independent
    of $\delta$ in (\ref{eq:Ma-N->delta}), i.e.,
    \begin{equation} 
      \label{eq:Ma-N-small}
      G(x)  \leq C\, e^{-a\, \Lambda(x)}
      \quad \text{ for } x > 0.
    \end{equation}
  \end{enumerate}
\end{thm}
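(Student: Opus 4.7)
The self-similar stationary equation reads $(xG)' + (1+B)G = \LL_+G$ with $xG|_{x=0^+}=0$. Since $\Lambda'(x)=B(x)/x$, multiplying by the integrating factor $xe^{\Lambda(x)}$ produces
\begin{equation*}
\frac{d}{dx}\bigl(x^2 e^{\Lambda(x)} G(x)\bigr) = x\,e^{\Lambda(x)}\,\LL_+G(x) \ge 0,
\end{equation*}
and, because Hypothesis~\ref{hyp:B-polynomial} with $\gamma>0$ forces $\Lambda(0)=0$, integration from $0$ yields the Volterra identity
\begin{equation*}
G(x) = \frac{e^{-\Lambda(x)}}{x^2}\int_0^x s\,e^{\Lambda(s)}\,\LL_+G(s)\,ds.
\end{equation*}
This is the skeleton for both bounds. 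For the \emph{lower bound}, the monotonicity of $x^2 e^{\Lambda(x)} G(x)$ combined with $G(\delta)>0$ (from Theorem~\ref{thm:existence-eigenproblem}) gives $G(x)\ge C_\delta e^{-\Lambda(x)}/x^2$ on $[\delta,\infty)$; since $a'>1$, the ratio $e^{(a'-1)\Lambda(x)}/x^2\to\infty$, so this lower bound dominates $C'e^{-a'\Lambda(x)}$, with any finite range $[\delta,x_\ast]$ absorbed by continuity and positivity.

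For the \emph{upper bound} in part~\ref{it:Ma-N->delta}, the plan is a two-stage argument. First, a moment analysis establishes the weighted $L^1$ bound $\int_0^\infty e^{a\Lambda}G\,dx<\infty$ for every $a\in(0,B_m/B_M)$. Testing the stationary equation against $x^\alpha$ and integrating by parts gives, with $M_\alpha:=\int x^\alpha G\,dx$,
\begin{equation*}
(\alpha-1)M_\alpha \;\ge\; \Bigl(B_m - \tfrac{2B_M}{\alpha+1}\Bigr) M_{\alpha+\gamma},
\end{equation*}
where Hypothesis~\ref{hyp:B-polynomial} gives the sink contribution $\int BGx^\alpha \ge B_m M_{\alpha+\gamma}$, while the pointwise bound $b(y,x)\le 2B_M y^{\gamma-1}$ that follows from Hypotheses~\ref{hyp:B-polynomial} and \ref{hyp:b-bounded-above} yields $\int_0^y x^\alpha b(y,x)\,dx \le \frac{2B_M}{\alpha+1}y^{\alpha+\gamma}$ on the fragmentation side. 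Iterating this recursion along $\alpha_k=\alpha_0+k\gamma$ (starting from $\alpha_0$ large enough that the coefficient is positive) produces the asymptotic growth $M_{\alpha_0+k\gamma}\lesssim \gamma^k(k-1)!/B_m^k$; expanding $e^{a\Lambda(x)}\le e^{aB_M x^\gamma/\gamma}$ in power series and summing term by term, the integral $\int e^{a\Lambda}G\,dx$ becomes a series of the form $\sum_k (aB_M/B_m)^k/k$, which converges exactly when $a<B_m/B_M$.

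Second, this weighted $L^1$ bound is upgraded to a pointwise bound via the Volterra identity. Applying stage one at a slightly larger exponent $a+\epsilon<B_m/B_M$ and absorbing the polynomial factor $y^{\gamma-1}$ into $e^{\epsilon\Lambda(y)}$ gives $\int y^{\gamma-1}G(y)e^{a\Lambda(y)}\,dy<\infty$; pushing the weight out of the tail integral then yields
\begin{equation*}
\LL_+G(s) \;\le\; 2B_M\int_s^\infty y^{\gamma-1}G(y)\,dy \;\le\; C\,e^{-a\Lambda(s)}.
\end{equation*}
Plugging into the Volterra representation and using the trivial bound $e^{(1-a)\Lambda(s)}\le e^{(1-a)\Lambda(x)}$ for $s\le x$ gives
\begin{equation*}
G(x) \;\le\; \frac{C\,e^{-\Lambda(x)}}{x^2}\int_0^x s\,e^{(1-a)\Lambda(s)}\,ds \;\le\; \frac{C}{2}\,e^{-a\Lambda(x)},
\end{equation*}
valid on $[\delta,\infty)$. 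For part~\ref{it:Ma-N}, Hypothesis~\ref{hyp:b-bounded-above} provides the uniform control $\LL_+G(s) \le P_M\int_0^\infty (B(y)/y)G(y)\,dy \le P_M B_M M_{\gamma-1}$ (with $M_{\gamma-1}$ finite by Theorem~\ref{thm:existence-eigenproblem}, invoking \eqref{eq:small-moment-b} when $\gamma<1$); the Volterra identity then gives $x^2 e^{\Lambda(x)}G(x) \le C\int_0^x s\,e^{\Lambda(s)}\,ds = O(x^2)$ as $x\to 0^+$, hence $G=O(1)$ uniformly on $(0,\delta]$, removing the $\delta$-dependence of the constant.

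The main technical obstacle is pinning down the sharp constant $a<B_m/B_M$ in stage one: a naive maximum-principle argument on $H:=Ge^{a\Lambda}$ only delivers $a<1$, and the correct threshold emerges only from the moment recursion that tracks the competition between the lower bound $B_m$ on the sink coefficient and the upper bound $B_M$ on the fragmentation transport. The secondary hurdle is verifying that the boundary term $x^2 e^{\Lambda(x)}G(x)$ indeed vanishes at $x=0^+$, which needs $\Lambda(0)=0$ (guaranteed by $\gamma>0$) together with enough regularity of $G$ at the origin to rule out a degenerate boundary condition in the Volterra identity.
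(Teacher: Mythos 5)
Your overall architecture (a weighted moment bound feeding into an estimate on $x^2e^{a\Lambda}G$ via the stationary equation) is the same as the paper's, and your lower bound and your treatment of $x<\delta$ in part~\ref{it:Ma-N} match the paper's almost verbatim. But there is a genuine gap in your proof of the upper bound in part~\ref{it:Ma-N->delta}: in two essential places you invoke the pointwise bound $b(y,x)\le P_MB(y)/y\le P_MB_My^{\gamma-1}$, which is Hypothesis~\ref{hyp:b-bounded-above} and is \emph{not} assumed in part~\ref{it:Ma-N->delta} (it only enters in part~\ref{it:Ma-N}). First, your moment recursion controls the gain term by $\int_0^y x^\alpha b(y,x)\,dx\le\frac{2B_M}{\alpha+1}y^{\alpha+\gamma}$ using that pointwise bound; under Hypotheses~\ref{hyp:b-basic}--\ref{hyp:b-nonconcentrated-at-1} alone, $b(x,\cdot)$ is merely a measure and no such bound is available. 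The correct substitute is the averaged estimate $\int_0^y x^\alpha b(y,x)\,dx\le p_\alpha\,y^\alpha B(y)$ with $p_\alpha<1$ for $\alpha>1$, which follows from the non-concentration Hypothesis~\ref{hyp:b-nonconcentrated-at-1} via Corollary~\ref{cor:b-moment-bounds}, and which yields the same recursion $(\alpha-1)M_\alpha\ge(1-p_\alpha)B_mM_{\alpha+\gamma}$. Second, and less repairable within your scheme, your ``stage two'' bounds $\LL_+G(s)$ pointwise by $P_MB_M\int_s^\infty y^{\gamma-1}G(y)\,dy$ before inserting it into the Volterra formula; without Hypothesis~\ref{hyp:b-bounded-above}, $\LL_+G$ need not admit any such pointwise control. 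The paper avoids this entirely: it shows that $\partial_x\bigl(x^2e^{a\Lambda}G\bigr)$ is in $L^1(0,\infty)$ by integrating the gain term against the weight and using only the identity $\int_0^y x\,b(y,x)\,dx=yB(y)$ (after Fubini and $e^{a\Lambda(x)}\le e^{a\Lambda(y)}$), concluding $x^2e^{a\Lambda}G\in BV\subset L^\infty$. As written, your argument proves the upper bound of part~\ref{it:Ma-N->delta} only under the extra Hypothesis~\ref{hyp:b-bounded-above}.

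Two smaller points. The constant in $b(y,x)\le 2B_My^{\gamma-1}$ should be $P_MB_M$, not $2B_M$; Hypothesis~\ref{hyp:b-bounded-above} does not fix $P_M=2$. And the vanishing of the boundary term $x^2e^{\Lambda(x)}G(x)$ at $x=0^+$, which you flag but leave open and which your Volterra identity (and part~\ref{it:Ma-N}) genuinely needs, is settled in the paper by noting that $x^2e^{\Lambda}G$ is $BV$, hence has a limit at $0$, and that the integrability of $xG$ forces this limit to be $0$; mere continuity of $\Lambda$ at $0$ is not enough.
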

\begin{rem}
  We notice that, due to Hypothesis \ref{hyp:B-polynomial},
  $(B_m/\gamma) x^\gamma \leq \Lambda(x) \leq (B_M/\gamma) x^\gamma$,
  so the bound \eqref{eq:Ma-N->delta} directly implies that for any
  $a_1 > B_M / \gamma$ and $a_2 < B_m^2 / (\gamma B_M)$ one has
  \begin{equation} 
    \label{eq:Ma-N->delta-power}
    C'\, e^{-a_1\, x^\gamma}
    \leq G(x)
    \leq
    C\, e^{-a_2\, x^\gamma}
    \quad \text{ for } x > \delta,
  \end{equation}
  and also the corresponding one instead of
  \eqref{eq:Ma-N-small}. Note that when $B_M = B_m$ the condition on
  $a_1$, $a_2$ becomes $a_2 < B_M / \gamma < a_1$.
\end{rem}

The goal of this section is to give the proof of the above theorem,
which we develop in several steps.
We remark that in the particular case of $p$ constant in
(\ref{eq:b-selfsimilar}) (so, $p \equiv 2$ due to the normalization
$\int z p(z) \,dz = 1$) we can find an explicit expression for the
equilibrium $G$ (i.e., a solution of \eqref{eq:G}). Indeed,
$G(x)=\exp\left(-\int_0^x \frac{B(s)}{s} \,ds\right)$ satisfies
(\ref{eq:G}). As a consequence, in the case $b(x,y)=2\, x^{\gamma-1}$
(so $B(x)=x^\gamma$), the profile $G$ is $\displaystyle e^
{-\frac{x^\gamma}{\gamma}} $ for $\gamma>0$. In the general case where
$b$ is not of the form (\ref{eq:b-selfsimilar}) there is no explicit
expression available for the self-similar profile $G$.

\begin{lem}
  \label{lem:Ma}
  Assume that $b$ satisfies Hypotheses
  \ref{hyp:b-basic}--\ref{hyp:b-nonconcentrated-at-1}, and consider
  $G$ the unique self-similar profile given by Theorem
  \ref{thm:existence-eigenproblem}. For any $0<a< B_m/\gamma$ there
  exists a constant $C_a$ such that:
  \begin{equation} 
    \int_0^\infty e^{a\, x^\gamma} \, G(x) \ dx \le C_a.
    \label{eq:Ma}
  \end{equation}
\end{lem}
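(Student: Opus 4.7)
The plan is to obtain the exponential moment bound by controlling the polynomial moments $M_k := \int_0^\infty x^k G(x)\,dx$ one by one (all finite by Theorem~\ref{thm:existence-eigenproblem}) and then summing the power series expansion of $e^{ax^\gamma}$.

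I would start by multiplying the stationary equation $(xG)' + G = \LL G$ by a test function $\varphi$ and integrating by parts to get the orthogonality identity
\[
\int_0^\infty G(x)\,\bigl[(1+B(x))\,\varphi(x) - x\,\varphi'(x) - \LL_+^{*}\varphi(x)\bigr]\,dx = 0,
\]
the boundary terms at $0$ and $\infty$ vanishing because $x^{\alpha+1}G(x) \in W^{1,1}(0,\infty) \hookrightarrow C_0$ by Theorem~\ref{thm:existence-eigenproblem}. Specializing to $\varphi(x) = x^k$ with $k>1$ and rearranging,
\[
(k-1)\,M_k = \int_0^\infty G(x)\Bigl[ B(x)\,x^k - \int_0^x b(x,y)\, y^k\,dy\Bigr]\,dx.
\]
Splitting the inner integral at $y=(1-\epsilon)x$ and invoking Hypotheses~\ref{hyp:number-of-pieces} and~\ref{hyp:b-nonconcentrated-at-1}, for every $\delta>0$ one finds $\epsilon=\epsilon(\delta)$ with
\[
\int_0^x b(x,y)\,(y/x)^k\,dy \le \bigl(\kappa\,(1-\epsilon)^k + \delta\bigr)\,B(x);
\]
combined with the lower bound $B(x) \ge B_m x^\gamma$ from Hypothesis~\ref{hyp:B-polynomial}, this yields the moment recursion
\[
B_m\,M_{k+\gamma} \le \frac{k-1}{1-\kappa(1-\epsilon)^k-\delta}\,M_k \qquad (k>1).
\]

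For $\epsilon$ fixed and $k\ge k_0(\epsilon,\delta)$, the factor $(1-\epsilon)^k$ becomes negligible so the denominator is bounded below by $1-2\delta$. Specializing to $k=n\gamma$ and iterating, I obtain $M_{n\gamma} \le C_0\bigl(\gamma/(B_m(1-2\delta))\bigr)^n\,\Gamma(n-1/\gamma)$ for all large $n$. The Stirling asymptotic $\Gamma(n-1/\gamma)/n! \sim n^{-1-1/\gamma}$ then reduces the convergence of $\sum a^n M_{n\gamma}/n!$ to that of a geometric-type series with ratio $a\gamma/(B_m(1-2\delta))$. Given $a<B_m/\gamma$, choosing $\delta$ small enough that $a\gamma < B_m(1-2\delta)$ yields convergence and the desired bound $C_a$.

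The main obstacle is the two-parameter trade-off inside $\kappa(1-\epsilon)^k + \delta$: one must make $\delta(\epsilon)$ small while also keeping $(1-\epsilon)^k$ small, and this must be done uniformly past some threshold in $k$ so that the moment recursion iterates cleanly. The prefactor $(k-1)$ in the recursion is exactly what gets absorbed by $n!$ in the power series, and this balance is what produces the sharp constant $B_m/\gamma$. Justifying the integration by parts (and in particular the vanishing of the boundary term at infinity) is routine once one has the regularity provided by Theorem~\ref{thm:existence-eigenproblem}.
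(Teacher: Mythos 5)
Your proposal is correct and follows essentially the same route as the paper: multiply the stationary equation by $x^k$, use the uniform non-concentration of $b(x,\cdot)$ near $y=x$ (Hypotheses~\ref{hyp:number-of-pieces} and~\ref{hyp:b-nonconcentrated-at-1}) to get a contractive moment recursion $M_{k+\gamma}\lesssim k\,M_k/B_m$, iterate, and sum the series for $e^{ax^\gamma}$ with radius of convergence $B_m/\gamma$. The only cosmetic differences are that the paper packages your inline bound $\kappa(1-\epsilon)^k+\delta$ into the constants $p_k\to 0$ of Corollary~\ref{cor:b-moment-bounds}, and checks convergence by a ratio test on the recursion coefficients rather than via Stirling's formula.
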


\begin{proof}
  We denote by $M_k$ the $k$-th moment of $G$. Multiply
  eq. (\ref{eq:G}) by $x^k$ with $k > 1$ to obtain:
  \begin{equation}
    \label{eq:p-m-ssf}
    (k-1) M_k
    \geq
    (1-p_k) \int_0^\infty x^k B(x) G(x) \,dx
    .
  \end{equation}
  where we have taken into account that, using Corollary
  \ref{cor:b-moment-bounds},
  \begin{multline*}
    \int_0^\infty x^k\int_x^\infty b(y,x)\, G(y) \, dy \, dx
    =
    \int_0^\infty G(y) \int_0^y x^k b(y,x) \, dx \, dy
    \\
    \leq
    p_k \int_0^\infty y^k\, B(y) \, G(y) \ dy.
  \end{multline*}
  By Hypothesis \ref{hyp:B-polynomial}, and as $p_k < 1$ for $k > 1$,
  (\ref{eq:p-m-ssf}) implies that
  \begin{equation*}
    (k-1)\, M_k \geq (1-p_k)B_m M_{k+\gamma},
    \quad \text{ or } \quad
    M_{k+\gamma}\le \frac{k}{B_m(1-p_{k})}\, M_k.
  \end{equation*}
  Applying this for integer $\ell \geq 1$ and $k := 1 + \ell \gamma$,
  \begin{equation*}
    M_{1 + (\ell+1)\gamma}
    \le
    C_\ell \, M_{1 + \ell \gamma},
    \quad \text{ where } \quad
    C_\ell := \frac{1 + \gamma \ell}{B_m(1-p_{1 + \gamma \ell})}.
  \end{equation*}
  Solving the recurrence relation,
  \begin{equation}
    \label{eq:Mk-bound-ssfrag}
    M_{1 + \ell \gamma}
    \leq
    M_{\gamma+1}\, \prod_{i=1}^{\ell-1} C_i
    \qquad (\ell \geq 1).
  \end{equation}
  With this,
  \begin{multline*}
    \int_0^\infty e^{a\, x^\gamma} G(x)\ dx
    =    
    \sum_{i=0}^\infty
    \int_0^\infty \frac{a^i}{i!} \, x^{\gamma \, i} \, G(x) \ dx
    =
    \sum_{i=0}^\infty \frac{a^i}{i!} \, M_{\gamma \, i}
    \\
    \leq
    M_0 + M_\gamma +
    M_{1+\gamma}
    \sum_{i=1}^\infty \frac{a^i}{i!} \, \prod_{i=1}^{\ell-1} C_i.
  \end{multline*}
  The last expression in the sum is a power series in $a$, with radius
  of convergence equal to $B_m/\gamma$. This can be checked, for
  example, by noticing that
  \begin{equation*}
    \frac{C_\ell}{\ell+1} \to \frac{\gamma}{B_m}
    \quad
    \text{ as } \ell \to +\infty,
  \end{equation*}
  which corresponds to the quotient of two consecutive terms in the
  power series. This proves the lemma.  
\end{proof}

\medskip

With this result we can now prove Theorem \ref{BdsSSFeq}.

\begin{proof}
  Taking $\delta > 0$, we give the proof in several steps.
  
  \paragraph{First step (upper bound for $x \geq \delta$):}
  
  From (\ref{eq:G}), we calculate as follows:
  \begin{multline}
    \label{eq:dxK}
    \partial_x \left(x^2\, e^{a \Lambda} G \right)
    = x\,e^{a \Lambda} (2\, G + x \partial_x G)
    +a\, B\, x \, e^{a\Lambda} G
    = x\, e^{a \Lambda} \,\LL G
    +a\, B\, x \, e^{a\Lambda} G
    \\
    = x\, e^{a \Lambda} \int_x^\infty b(y,x) \,G(y) \, dy
    + (a-1) \,B\, x \, e^{a\Lambda} G.
  \end{multline}
  Let us show that for $a < 1$ this expression is integrable. For the
  first term,
  \begin{multline*}
      \int_0^\infty x\, e^{a \Lambda(x)}
      \int_x^\infty b(y,x) \,G(y) \,dy \,dx
      =
      \int_0^\infty G(y)
      \int_0^y e^{a \Lambda(x)} b(y,x) \,dx \,dy
      \\
      \leq
      \int_0^\infty y\, e^{a \Lambda(y)} G(y) B(y) \,dy
      \leq
      B_M
      \int_0^\infty y^{\gamma+1}\,e^{\frac{a B_M}{\gamma} y^\gamma} \,
      G(y) \,dy
      \, < + \infty,
  \end{multline*}
  where the last inequality is due to Lemma \ref{lem:Ma}, since
  $y^{\gamma+1} e^{\frac{a B_M}{\gamma} y^\gamma} \le C\,
  e^{by^\gamma}$, where $a<b<\frac{B_m}{\gamma}$ and $C$ is a constant
  depending on $a$, $b$ and $\gamma$ (recall that $a < B_m/B_M$). For
  the same reason, the last term in (\ref{eq:dxK}) is integrable.
  Therefore, since $\partial_x \left(x^2\, e^{a \Lambda(x)} G(x) \right)$
  is bounded in $L^1$, we deduce that $$x^2\, e^{ax^\gamma} G(x)\in
  BV(0,\infty)\subset L^\infty$$ and in consequence
  $$
  G(x) \le C_a^1 e^{-a \Lambda(x)}  \quad \mbox{for}\ x \geq \delta
  .
  $$
  
  \paragraph{Second step (lower bound for $x>\delta$):}

  Writing equation \eqref{eq:dxK} for $a = 1$ gives that $x \mapsto
  x^2 e^{\Lambda(x)} G(x)$ is a nondecreasing function, so
  \begin{equation*}
    x^2 e^{\Lambda(x)} G(x)
    \geq \delta^2 e^{\Lambda(\delta)} G(\delta)
    \qquad (x > \delta),
  \end{equation*}
  which implies the lower bound in (\ref{eq:Ma-N->delta}) for any $a <
  1$. Notice that $G(\delta) > 0$ by Theorem
  \ref{thm:existence-eigenproblem}.

  \paragraph{Third step (upper bound for $x<\delta$):}
  
  As $x \mapsto x^2 e^{x^\gamma/\gamma} G(x)$ has bounded variation,
  it must have a limit as $x \to 0$. As $x \mapsto x\,G(x)$ is
  integrable, it follows that this limit must be $0$. Hence, writing
  \eqref{eq:dxK} for $a = 1$, integrating between $0$ and $z$, and
  using Hypothesis \ref{hyp:b-bounded-above},
  \begin{multline*}
    z^2 e^{z^\gamma/\gamma} G(z)
    =
    \int_0^z
    x\, e^{a \Lambda} \int_x^\infty b(y,x) \,G(y) \,dy \,dx
    \\
    \leq
    C B_M \int_0^z
    x e^{a \Lambda} \int_x^\infty
    y^{\gamma-1}\,G(y) \,dy\,dx
    \leq
    C' \int_0^z
    x e^{x^\gamma/\gamma}\,dx
    \leq
    C' z^2 e^{z^\gamma/\gamma},
  \end{multline*}
  which implies $G(z) \leq C'$ for all $z > 0$. This is enough to have
  (\ref{eq:Ma-N-small}) for $x < \delta$. We have used above that the
  moment of $G$ of order $\gamma-1$ is bounded, as given by Theorem
  \ref{thm:existence-eigenproblem}, taking into account that
  (\ref{eq:b-bounded-above}) holds and $\gamma > 0$, so $\int_0^x
  y^{\gamma-1} b(x,y) \,dy \leq (C/\gamma) x^{\gamma-1}$.
\end{proof}


\section{Bounds for the growth-fragmentation equation}
\label{sec:gf-bounds}

In this section we present some estimates by above and below for the
functions $G$ and $\phi$, solutions to the eigenvalue problem
\eqref{eq:G} and \eqref{eq:phi}. The aim, as in the
previous section, is to obtain bounds which are accurate enough to
apply Theorem \ref{thm:eedi-b}, and then prove Theorem \ref{thm:main}.

The main additional difficulty as compared to the self-similar
fragmentation equation from previous section is that the dual
eigenfunction $\phi$ is in general not explicit, which makes it
necessary to have additional estimates for it.

In the rest of this section we will give the proof of the following
result:
\begin{thm}
  \label{thm:gf-bounds}
  Assume Hypotheses
  \ref{hyp:b-basic}--\ref{hyp:b-nonconcentrated-at-1}. Call
  \begin{equation*}
    \Lambda(x) := \lambda x + \int_0^x B(x)\,dx.
  \end{equation*}
  \begin{enumerate}
  \item \label{it:gf-G-upper}
    For any $a \in (0,B_m/B_M)$ there exists $K_a > 0$ such that
    \begin{equation}
      \label{eq:GleK}
      \forall \, x \ge 0 \qquad
      G(x) \le K_a \, e^{- a \, \Lambda(x) } .
    \end{equation}
    If we also assume Hypothesis \ref{hyp:b-bounded-above} and $\gamma
    > 0$, then this can be strengthened to
    \begin{equation}
      \label{eq:GleK-strong}
      \forall \, x \ge 0 \qquad
      G(x) \le K_a \,\min\{1,x\} \, e^{- a \, \Lambda(x) } .
    \end{equation}
  \item \label{it:gf-G-lower}
    For any $\delta > 0$ there exists $K_{\delta} > 0$ such that
    \begin{equation}
      \label{eq:GleK-lower}
      \forall \, x \ge \delta\qquad
      G(x) \geq K_{\delta} \, e^{- \Lambda(x) } .
    \end{equation}
  \item \label{it:gf-phi-bounds}
    Assume additionally that
    \begin{equation}
      \label{eq:b-hypotheses-for-phi-bounds}
      \frac{B(x)}{x^{\mu+1}} \to 0
      \quad
      \text{ and }
      \quad
      B(x) \to +\infty
      \quad \text{ as } x \to +\infty.
    \end{equation}
    (Here $\mu$ is the one in Hypothesis
    \ref{hyp:b-nonconcentrated-at-0}.) There exist $C_0, C_1 \in
    (0,\infty)$ and for any $k \in (0,1)$ there exists $C_k \in
    (0,\infty)$ such that
    \begin{equation}
      \label{eq:phi-bounds}
      \forall \, x \ge 0 \qquad C_0 + C_k \, x^k \le \phi(x) \le C_1 \, (1
      + x).
    \end{equation}
  \end{enumerate}
\end{thm}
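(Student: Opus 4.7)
The plan is to handle the three parts of Theorem~\ref{thm:gf-bounds} in order. The bounds on $G$ come from an integrating-factor identity plus exponential-moment estimates in the spirit of Lemma~\ref{lem:Ma}, while the bounds on $\phi$ require a Volterra representation and a maximum-principle argument since the dual equation has no simple monotonicity.

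\textbf{Bounds on $G$ (parts~1 and~2).} The eigenvalue equation rewrites for $a(x)\equiv 1$ as $G'+\Lambda'G=\LL_+G$, equivalently
$$\frac{d}{dx}\bigl(e^{\Lambda(x)}G(x)\bigr)=e^{\Lambda(x)}\LL_+G(x)\ge 0.$$
Hence $e^{\Lambda}G$ is nondecreasing, and combined with $G(\delta)>0$ from Theorem~\ref{thm:existence-eigenproblem} this proves part~2. Since $G(0)=0$, integration gives the Duhamel representation $G(x)=e^{-\Lambda(x)}\int_0^x e^{\Lambda(y)}\LL_+G(y)\,dy$. For part~1 I first mirror Lemma~\ref{lem:Ma}: multiplying the eigenvalue equation by $x^k$ and using Hypotheses~\ref{hyp:number-of-pieces}, \ref{hyp:B-polynomial} together with the standard estimate $\int_0^y x^k b(y,x)\,dx\le p_k y^k B(y)$ with $p_k<1$ for $k>1$ yields a recurrence $M_{k+\gamma}\le C_k M_k$ for the moments of $G$, hence $\int e^{c\Lambda(x)}G(x)\,dx<\infty$ for every $c<B_m/B_M$. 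Under Hypothesis~\ref{hyp:b-bounded-above} one has $\LL_+G(y)\le P_M\int_y^\infty z^{-1}B(z)G(z)\,dz$, so this exponential moment makes the Duhamel integrand integrable and yields $G(x)\le K_a e^{-a\Lambda(x)}$ for any $a<B_m/B_M$. The extra factor $\min\{1,x\}$ in \eqref{eq:GleK-strong} follows from the power of $x$ one gains when integrating from $0$, exactly as in the third step of the proof of Theorem~\ref{BdsSSFeq}.

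\textbf{Bounds on $\phi$ (part~3).} The dual equation is $\phi'=(B+\lambda)\phi-\LL_+^{*}\phi$. For the constant lower bound $\phi\ge C_0$ I use a minimum-principle argument: if $\inf\phi$ were attained at some $x_0$ where $B(x_0)$ is large, then $\phi'(x_0)=0$ and $\LL_+^{*}\phi(x_0)\ge \kappa B(x_0)\inf\phi$ would contradict $(B(x_0)+\lambda)\inf\phi=\LL_+^{*}\phi(x_0)$ once $\kappa>1$ and $B(x_0)>\lambda/(\kappa-1)$; combined with $\phi(0)>0$ and continuity, this gives $\phi\ge C_0$ everywhere. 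For the upper bound $\phi\le C_1(1+x)$, multiplying the ODE by $e^{-\Lambda}$ and integrating from $x$ to $\infty$ gives the Volterra representation
$$\phi(x)=\int_x^\infty e^{\Lambda(x)-\Lambda(s)}\,\LL_+^{*}\phi(s)\,ds;$$
together with $B\le\Lambda'$ and an integration by parts using $\int_0^s b(s,y)(1+y)\,dy=(\kappa+s)B(s)$, this produces an integral inequality for $\phi/(1+x)$ which, combined with the lower bound $\phi\ge C_0$ to exploit the $\kappa>1$ cancellation, closes into the claimed linear upper bound. Finally, the sharper lower bound $\phi(x)\ge C_k x^k$ for $k<1$ is obtained by constructing an explicit subsolution $\eta(x)=C_k x^k$: Hypothesis~\ref{hyp:b-nonconcentrated-at-0} bounds $\LL_+^{*}\eta(x)$ by a fractional multiple of $B(x)\eta(x)$ that can be absorbed, and the assumption $B(x)/x^{\mu+1}\to 0$ makes $-\eta'=-C_k k x^{k-1}$ negligible at infinity, so a direct comparison with $\phi$ gives the bound.

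\textbf{Main obstacle.} The delicate step is the linear upper bound on $\phi$, since the dual equation admits no simple comparison principle: the Volterra representation gives a bound only up to a constant which under naive bootstrap may worsen at each iteration. The bootstrap closes only once one exploits the $\kappa>1$ cancellation between $(B+\lambda)\phi$ and $\LL_+^{*}\phi$ at infinity (quantitatively, via the lower bound $\phi\ge C_0$), which is the rigorous version of the formal asymptotic $\phi(x)\sim Cx$ read off the equation by a power-law ansatz.
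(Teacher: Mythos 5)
Your treatment of parts 1 and 2 is essentially the paper's: the identity $(e^{a\Lambda}G)'=(a-1)(B+\lambda)e^{a\Lambda}G+e^{a\Lambda}\LL_+G$, the exponential moment bound obtained by the moment recurrence (the paper's Lemma \ref{lem:Exp-moments-gf}), the monotonicity of $e^{\Lambda}G$ for the lower bound, and the gain of a factor $x$ near the origin from $G(0)=0$ all match. The restriction $a<B_m/B_M$ arises exactly as you indicate, from comparing $\Lambda$ with $\Lambda_m$.

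Part 3 is where your argument has genuine gaps, and it is precisely here that the paper's route diverges from yours. The paper does not work with $\phi$ on $(0,\infty)$ directly: it introduces the truncated eigenproblem $\phi_L$ on $[0,L]$ with $\phi_L(L)=0$ (Lemma \ref{lem:existence-phi-truncated}), proves a maximum principle for the operator $\mathcal{S}$ by evaluating at the minimum of $w(x)/x$ (Lemma \ref{lem:maximum-principle}), exhibits explicit super- and subsolutions ($x+\eta(x)$ and $x^k\varphi(x-L)$ with a cutoff $\varphi$ vanishing near $L$), and passes to the limit using the locally uniform convergence $\phi_L\to\phi$. Your substitutes break down at exactly the points this machinery is designed to handle. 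First, your minimum-principle argument for $\phi\ge C_0$ assumes the infimum of $\phi$ is attained; on the unbounded domain it may only be approached as $x\to\infty$, and nothing in your argument excludes $\liminf_{x\to\infty}\phi(x)=0$ (on the truncated problem, where compactness holds, the infimum \emph{is} attained --- at $x=L$, where $\phi_L=0$, which is why the paper needs the cutoff in its subsolution rather than a naive interior-minimum argument). Second, the Volterra representation $\phi(x)=\int_x^\infty e^{\Lambda(x)-\Lambda(s)}\LL_+^{*}\phi(s)\,ds$ presupposes $e^{-\Lambda(x)}\phi(x)\to0$, which is an a priori growth bound you have not established; and even granting it, the kernel involves $\phi(y)$ for $y$ ranging over all of $(0,s)$ with $s\ge x$, i.e.\ at points both below and above $x$, so there is no Volterra/Gronwall ordering and your claim that the bootstrap ``closes'' via the $\kappa>1$ cancellation is asserted, not proved --- you yourself flag this as the main obstacle without resolving it. Third, for $\phi\ge C_k x^k$ the needed inequality $\int_0^x y^k b(x,y)\,dy\ge p_k'x^kB(x)$ with $p_k'>1$ comes from Hypothesis \ref{hyp:b-nonconcentrated-at-1} (via Lemma \ref{lem:y^kb-unif-integrable-near-1} and Corollary \ref{cor:b-moment-bounds}), not from Hypothesis \ref{hyp:b-nonconcentrated-at-0} as you state, and the ``direct comparison with $\phi$'' again requires a comparison principle on an unbounded domain with no control at infinity. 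To repair part 3 you essentially need the paper's truncation--comparison--limit scheme (or an equivalent compactification); the pointwise arguments on $(0,\infty)$ as written do not close.
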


\begin{rem}
  With \eqref{eq:GleK}, \eqref{eq:GleK-lower} and
  \eqref{eq:B-polynomial}, in the case $\gamma > 0$ it is easy to see
  that for any $\delta > 0$, $a_1 > B_M/(\gamma+1)$ and $a_2 <
  B_m^2/(B_M(\gamma+1))$ there exist $C_1,C_2 > 0$ such that
  \begin{equation}
    \label{eq:gf-G-bounds-simple}
    C_1\, e^{-a_1 x^{\gamma+1}}
    \leq G(x)
    \leq C_2\, e^{-a_2\, x^{\gamma+1}}
    \qquad (x > \delta).
  \end{equation}
  In the case $\gamma = 0$, from \eqref{eq:GleK} and
  \eqref{eq:GleK-lower} one sees that for any $\delta > 0$ and $a >
  1$ there exist $C_1, C_2 > 0$ such that
  \begin{equation}
    \label{eq:gf-G-bounds-simple-gamma=0}
    C_1 e^{-(\lambda + B_M)x}
    \leq G(x)
    \leq
    C_2 e^{-a(\lambda + B_m)x}
    \qquad
    (x > \delta).
  \end{equation}
\end{rem}


\subsection{Bounds for $G$}

\begin{lem}
  \label{lem:Exp-moments-gf}
  Assume hypotheses
  \ref{hyp:b-basic}--\ref{hyp:b-nonconcentrated-at-1}. Call
  \begin{equation*}
    \Lambda_m(x) := \lambda x + \frac{B_m}{\gamma+1} x^{\gamma+1}.
  \end{equation*}
  For each $a < 1$,
  \begin{eqnarray}
    \label{eq:Exp-moments-gf} 
    \int_0^\infty e^{a \Lambda_m(x)} G(x) \,dx
    < +\infty.
  \end{eqnarray}
\end{lem}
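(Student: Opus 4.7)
The plan is to extend the moment-based argument of Lemma \ref{lem:Ma} to the growth-fragmentation case, where the stationary equation reads $G' + \lambda G + BG = \LL_+ G$. Multiplying by $x^k$ and integrating over $(0,\infty)$, I use integration by parts on the drift term (the boundary contributions vanish thanks to $G(0)=0$ and the polynomial moment bound in Theorem \ref{thm:existence-eigenproblem}), together with the estimate $\int_0^y x^k b(y,x)\,dx \le p_k y^k B(y)$ from Corollary \ref{cor:b-moment-bounds} and the lower bound $B(x)\ge B_m x^\gamma$, to obtain the double recursion
$$\lambda M_k + (1-p_k) B_m M_{k+\gamma} \le k M_{k-1},$$
where $M_k := \int_0^\infty x^k G\,dx$. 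Compared to the self-similar case, the drift produces the extra $\lambda M_k$ term and the recursion has gap $\gamma+1$ rather than $\gamma$.

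Next I need $p_k \to 0$ as $k \to \infty$. Using the identity $yB(y) = \int_0^y x\,b(y,x)\,dx$, I rewrite $p_k$ as a normalized average of $(x/y)^{k-1}$, split this integral at $x = (1-\eps)y$, and apply Hypothesis \ref{hyp:b-nonconcentrated-at-1} to obtain $p_k \le (1-\eps)^{k-1} + \delta$ for any $\delta > 0$ and a matching $\eps=\eps(\delta)$; letting $k\to\infty$ and then $\delta\to 0$ gives $p_k\to 0$.

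I then iterate according to the sign of $\gamma$. For $\gamma>0$ I drop the $\lambda M_k$ term and iterate $M_{k+\gamma} \le k M_{k-1}/(B_m(1-p_k))$ in steps of $\gamma+1$, which, using $p_k\to 0$ and Stirling for the arithmetic progression, yields $M_{n(\gamma+1)} \le C\, n!\,(\gamma+1)^n/B_m^n$ up to polynomial factors. Summing the Taylor series
$$\int_0^\infty e^{ac x^{\gamma+1}} G\,dx \;=\; \sum_{n\ge 0}\frac{(ac)^n}{n!}\, M_{n(\gamma+1)}, \qquad c := \frac{B_m}{\gamma+1},$$
then converges for every $a<1$, since the ratio of successive terms tends to $ac(\gamma+1)/B_m = a$. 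For the boundary case $\gamma=0$ the two left-hand terms collapse into $(\lambda + (1-p_k)B_m)M_k \le k M_{k-1}$, giving $M_n \le C\, n!/(\lambda+B_m)^n$; for $-1<\gamma<0$ I drop the positive fragmentation term and obtain $M_n \le C\, n!/\lambda^n$. Finally I split $a\Lambda_m(x) = a\lambda x + ac x^{\gamma+1}$ and absorb the asymptotically subdominant piece into a small extra multiple of the dominant one via $t^\alpha \le \eta\, t^\beta + C_\eta$ for $0<\alpha<\beta$, so that in each case $\int e^{a\Lambda_m}G\,dx$ is bounded by an exponential moment strictly below the critical rate proved above, yielding \eqref{eq:Exp-moments-gf}.

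The main obstacle is that the gap $\gamma+1$ in the recursion must match the prefactor $B_m/(\gamma+1)$ of $x^{\gamma+1}$ in $\Lambda_m$ exactly, so that the summation threshold comes out to $a<1$ rather than some strictly smaller constant; this forces the careful analysis of $1-p_k$, and the uniform decay $p_k\to 0$ provided by Hypothesis \ref{hyp:b-nonconcentrated-at-1} is what makes the sharp constant achievable.
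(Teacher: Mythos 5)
Your proposal is correct and follows essentially the same route as the paper: the same moment recursion $(1-p_k)B_m M_{k+\gamma} \le kM_{k-1}-\lambda M_k$ iterated along the arithmetic progression of step $\gamma+1$, the same three-way case split on the sign of $\gamma$, the same ratio-test identification of the critical exponential rate, and the same final absorption of the subdominant term of $\Lambda_m$. The only cosmetic difference is that you reprove $p_k\to 0$ and $p_k<1$ inline from Hypothesis \ref{hyp:b-nonconcentrated-at-1}, whereas the paper delegates this to Corollary \ref{cor:b-moment-bounds} in the appendix (whose proof is the same splitting argument, with the slightly sharper bound $p_k\le\delta+(1-\delta)(1-\epsilon)^k$ that makes $p_k<1$ immediate for every $k>1$).
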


\begin{proof}  
  We will first do the proof for $\gamma > 0$, and leave the case $-1
  < \gamma \leq 0$ for later. Multiply equation \eqref{eq:G}
  by $x^k$ with $k > 1$, and integrate to obtain
  \begin{equation*}
    k M_{k-1} - \lambda \, M_k +
    \int_0^\infty G(y) \int_0^y x^k \,b(y,x) \,dx\,dy
    - \int_0^\infty B(x) \, x^k \, G \, dx  = 0,
  \end{equation*}
  which gives, using (\ref{eq:b-moments}) and then
  \eqref{eq:B-polynomial} (noting $p_k < 1$ for $k > 1$),
  \begin{equation}
    \label{eq:momenteq-gf}
    (1-p_k) B_m M_{\gamma+k} \leq k M_{k-1} - \lambda M_k
    \leq k M_{k-1}.
  \end{equation}
  Applying this for $k = \ell(\gamma+1) - \gamma$, with $\ell \geq 2$
  an integer,
  \begin{equation*}
    M_{\ell(\gamma+1)} \leq C_\ell M_{(\ell-1)(\gamma+1)},
    \quad \text{ with } \,
    C_\ell := \frac{\ell(\gamma+1) - \gamma}
    {(1-p_{\ell(\gamma+1) - \gamma})B_m}.
  \end{equation*}
  Solving this recurrence relation gives, for $\ell \geq 2$,
  \begin{equation*}
    M_{\ell(\gamma+1)}
    \leq
    M_{\gamma+1}\, \prod_{i=2}^{\ell} C_\ell
    \qquad (\ell \geq 2).
  \end{equation*}
  Now, following an analogous calculation to the one in Lemma
  \ref{lem:Ma},
  \begin{equation}
    \label{eq:power-series}
    \int_0^\infty e^{a\, x^{\gamma+1}} G(x)\ dx
    \leq
    M_0
    +
    M_{\gamma+1} 
    \sum_{i=2}^\infty \frac{a^i}{i!} \, \prod_{i=2}^{\ell} C_\ell.
  \end{equation}
  Again as in Lemma \ref{lem:Ma}, one can check that the above power
  series in $a$ has radius of convergence $B_m/(\gamma+1)$ (using that
  $p_k \to 0$ when $k \to +\infty$, from Corollary
  \ref{cor:b-moment-bounds} in the Appendix). This proves the lemma
  for $\gamma > 0$ (note that the dominant term in $\Lambda$ in this
  case is $x^{\gamma+1}$, as then $x \leq C_\epsilon + \epsilon
  x^{\gamma+1}$ for any $\epsilon > 0$ and some $C_\epsilon > 0$).

  When $\gamma = 0$, from \eqref{eq:momenteq-gf} we obtain
  \begin{equation*}
    ((1-p_k) B_m + \lambda) M_{k} \leq k M_{k-1}
    \qquad (k > 1).
  \end{equation*}
  We can then follow the same reasoning as above, with the only
  difference that now
  \begin{equation*}
    C_\ell := \frac{k}
    {(1-p_{\ell})B_m + \lambda}
    \qquad (\ell \geq 2).
  \end{equation*}
  Now the power series in (\ref{eq:power-series}) has radius of
  convergence $B_m + \lambda$, which proves the lemma also in this
  case.

  In the case $\gamma \in (-1,0)$, from \eqref{eq:momenteq-gf} we
  obtain the inequality
  $$
  \forall \, k > 1 \qquad M_k \le {k \over \lambda} \, M_{k-1},
  $$
  from which we deduce thanks to an iterative argument as before that
  $$
  \forall \, k \in \N^*  \qquad M_k
  \le {k !  \over \lambda^k} \, (\lambda \, M_1).
  $$
  Hence, for $a < 1$,
  \begin{equation*}
    \int_0^\infty e^{a \lambda x} G(x) \,dx
    =
    M_0 + \sum_{k=1}^\infty \frac{a^k \lambda^k}{k!} M_k
    \leq
    M_0 + \lambda M_1 \sum_{k=1}^\infty a^k < +\infty.
  \end{equation*}
  The dominant term in $\Lambda$ in this case is $\lambda x$, so this
  finishes the proof.
\end{proof}

\medskip

\begin{proof}[Proof of points \ref{it:gf-G-upper}--\ref{it:gf-G-lower}
  in Theorem \ref{thm:gf-bounds}]

  With the previous lemma we are ready to prove our bounds on $G$.

  \paragraph{First step (upper bound):}

  Take $0 \leq a \leq 1$, and calculate the derivative of $G(x) e^{a
    \Lambda(x)}$:
  \begin{equation}
    \label{eq:dxG*exp}
    (G e^{a\Lambda})'
    = (a-1) (B+\lambda) G e^{a\Lambda}
    + e^{a\Lambda} \mathcal{L}_+(G)
    .
  \end{equation}
  For $a < 1$ one can see that the right hand side is integrable on
  $(0,+\infty)$: for the last term,
  \begin{multline*}
    \int_0^\infty e^{a\Lambda} \mathcal{L}_+(G)
    =
    \int_0^\infty \mathcal{L}_+^*(e^{a\Lambda}) G
    =
    \int_0^\infty G(x) \int_0^x e^{a\Lambda(y)} b(x,y)\,dy\,dx
    \\
    \leq
    \int_0^\infty G(x) e^{a\Lambda(x)} \int_0^x b(x,y)\,dy\,dx
    = \kappa \int_0^\infty B(x) G(x) e^{a\Lambda(x)} \,dx < +\infty,
  \end{multline*}
  where we have used \eqref{eq:number-of-pieces}. The last expression
  is finite for $a < B_m/B_M$ due to Lemma \ref{lem:Exp-moments-gf}
  and the fact that
  \begin{multline*}
    \Lambda(x) = \lambda x + \int_0^x B(y) \,dy
    \leq
    \lambda x + B_M \int_0^x y^\gamma \,dy
    \\
    = \lambda x + \frac{B_M}{\gamma+1} x^{\gamma+1}
    \leq
    \frac{B_M}{B_m} \left(
      \lambda x + \frac{B_m}{\gamma+1} x^{\gamma+1}
    \right)
    =
    \frac{B_M}{B_m} \Lambda_m(x),
  \end{multline*}
  using the upper bound in \eqref{eq:B-polynomial}. The other term in
  \eqref{eq:dxG*exp} is also integrable for similar reasons, and we
  deduce that $e^{a \, \Lambda} \, G \in BV(0,\infty) \subset
  L^\infty$, which proves \eqref{eq:GleK}.
  
  In order to get \eqref{eq:GleK-strong} we need to prove that
  additionally, $G(x) \leq Cx$ for $x$ small (say, $x \leq 1$). For
  this it is enough to notice that $G(0)=0$ due to the boundary
  condition \eqref{eq:G-boundary}, and also that the right hand side
  of \eqref{eq:dxG*exp} is bounded for $x \in (0,1)$, as $B$ and $e^{a
    \Lambda} G$ are, and
  \begin{equation*}
    \mathcal{L}_+(G) = \int_x^\infty b(y,x) G(y) \,dy
    \leq
    C \int_0^\infty y^{\gamma-1} G(y) \,dy < +\infty,
  \end{equation*}
  due to Hypotheses \ref{hyp:b-bounded-above} and
  \ref{hyp:B-polynomial}. The last integral is finite because $\gamma
  > 0$ and we already know $G$ is bounded. This finishes the proof.

  \paragraph{Second step (lower bound):}
  
  Writing equation (\ref{eq:dxG*exp}) for $a = 1$ we obtain
  \begin{equation*}
    (G e^{\Lambda})'
    = e^{\Lambda} \mathcal{L}_+(G) \geq 0,
  \end{equation*}
  and hence
  \begin{equation*}
    G(x) e^{\Lambda(x)} \geq G(\delta) e^{\Lambda(\delta)}
    \qquad (x \geq \delta).
  \end{equation*}
  This proves the result, as $G(\delta) > 0$ by
  Theorem \ref{thm:existence-eigenproblem}.
\end{proof}

\subsection{Bounds for $\phi$}
\label{sec:phi-bounds}

First, in the case $B(x) = B$ constant, the first eigenvalue $\lambda$
of the operator $-\partial_x + \mathcal{L}$ is explicitly given by
$\lambda = B(\kappa - 1)$ (under Hypotheses
\ref{hyp:b-basic}--\ref{hyp:number-of-pieces}), and $\phi(x) = C$
constant is a solution of \eqref{eq:phi}, for some appropriate $C > 0$
determined by the normalization \eqref{eq:phi-positive-normalized}.

In general the solution $\phi$ of the eigenproblem \eqref{eq:phi} is
not explicit, and for its study we will use the following truncated
problem: given $L > 0$, consider
\begin{subequations}
  \label{eq:phi-truncated}
  \begin{gather}
    \label{eq:phi-eq-truncated}
    - \partial_x \phi_L
    + (B(x)+\lambda_L)\, \phi_L(x)
    = \mathcal{L}_+^*(\phi_L)(x)
    \qquad (0 < x < L, 
    \\
    \label{eq:phi-positive-normalized-truncated}
    \phi \geq 0,
    \quad \phi_L(L) = 0,
    \quad
    \int_0^L G(x) \phi_L(x) \,dx = 1.
  \end{gather}
\end{subequations}
This approximated problem is slightly different from the one
considered in \cite{DG10}, in that we are considering the first
eigenvector $G$ in the normalization
(\ref{eq:phi-positive-normalized-truncated}), and not an approximation
$G_L$ obtained by solving a similar truncated version of equation
(\ref{eq:G}). However, this modification is not essential, and the
results in \cite{DG10} show the following (see also the truncated
problems in \cite{citeulike:4788671}, \cite{MR2270822}):
\begin{lem}
  \label{lem:existence-phi-truncated}
  Assume Hypotheses
  \ref{hyp:b-basic}--\ref{hyp:b-nonconcentrated-at-0}. There exists
  $L_0 > 0$ such that for each $L \geq L_0$ the problem
  (\ref{eq:phi-truncated}) has a unique solution $(\lambda_L,
  \phi_L)$, with $\lambda_L > 0$ and $\phi_L \in
  W^{1,\infty}_{\text{loc}}$. In addition,
  \begin{align}
    \label{eq:lambda_L-converges}
    &\lambda_L \overset{L \to +\infty}{\longrightarrow} \lambda,
    \\
    \label{eq:phi_L-converges-uniformly}
    \text{for every $A > 0$, }
    \quad
    &\phi_L \overset{L \to +\infty}{\longrightarrow} \phi
    \quad \text{ uniformly on $[0,A)$},
  \end{align}
  where $(\lambda, \phi)$ is the unique solution of (\ref{eq:phi}).
\end{lem}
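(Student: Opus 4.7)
The plan is to follow the standard strategy for such truncated eigenvalue problems, as used in \cite{DG10,citeulike:4788671,MR2270822}, adapting only the normalization step to the one chosen here. I would organize it in three steps.

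\textbf{Existence and uniqueness at fixed $L$.} I would reformulate (\ref{eq:phi-truncated}) as a Krein--Rutman problem. For each parameter $\mu > 0$, let $K_\mu$ map $\psi \in C([0,L])$ to the unique solution $\varphi$ of $-\partial_x \varphi + (B+\mu)\varphi = \mathcal{L}_+^*(\psi)$ with $\varphi(L)=0$, obtained by explicit backward integration. The operator $K_\mu$ is strictly positive and compact (the ODE provides a uniform bound on $\partial_x\varphi$ in terms of $\|\psi\|_\infty$, and Arzelà--Ascoli applies), so by Krein--Rutman it admits a simple principal eigenvalue $r(\mu) > 0$ with a positive eigenfunction, unique up to scaling. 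The map $\mu \mapsto r(\mu)$ is continuous and strictly decreasing, with $r(\mu) \to +\infty$ as $\mu \to 0^+$ and $r(\mu) \to 0$ as $\mu \to +\infty$, provided $L \ge L_0$ is large enough that fragmentation is effective; this singles out a unique $\lambda_L > 0$ with $r(\lambda_L) = 1$. Rescaling the positive eigenfunction via $\int_0^L G \, \phi_L = 1$ yields $\phi_L$, and local $W^{1,\infty}$ regularity is immediate from the ODE.

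\textbf{A priori bounds uniform in $L$.} To prepare the passage to the limit, I would prove an upper bound $\phi_L(x) \le C(1+x)$ on $[0,L]$ by constructing an explicit super-solution of affine type $A+Bx$ and using Hypothesis \ref{hyp:b-nonconcentrated-at-0} to absorb $\mathcal{L}_+^*(A+Bx)$. A strictly positive lower bound on each compact interval $[0,A]$ follows from the positivity of $\phi_L$, the ODE, and the normalization. Uniform bounds on $\partial_x \phi_L$ on compact sets then follow from the equation. Finally, $\lambda_L$ is kept in a compact subinterval of $(0,+\infty)$ by testing (\ref{eq:phi-eq-truncated}) against $G$, integrating by parts, using the duality $\int G\,\mathcal{L}_+^*\phi_L = \int \phi_L\,\mathcal{L}_+ G$ together with (\ref{eq:G-eq}), and exploiting the exponential decay of $G$ from Theorem \ref{thm:gf-bounds} to handle the boundary term at $x=L$.

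\textbf{Passage to the limit.} By Arzelà--Ascoli on each compact subinterval, a subsequence of $\phi_L$ converges locally uniformly on $[0,+\infty)$ to some nonnegative $\tilde\phi \in W^{1,\infty}_{\text{loc}}$, while $\lambda_L \to \tilde\lambda$. Passing to the limit in (\ref{eq:phi-eq-truncated}) shows that $(\tilde\lambda,\tilde\phi)$ solves (\ref{eq:phi-eq}). Dominated convergence, using the uniform growth bound $\phi_L(x) \le C(1+x)$ against the exponential decay of $G$, yields $\int_0^\infty G\,\tilde\phi = 1$. Theorem \ref{thm:existence-eigenproblem} then identifies $(\tilde\lambda, \tilde\phi) = (\lambda, \phi)$, so in fact the entire family converges and both (\ref{eq:lambda_L-converges}) and (\ref{eq:phi_L-converges-uniformly}) hold. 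The delicate point, and the one I expect to be the main obstacle, is the uniform affine growth bound $\phi_L(x) \le C(1+x)$: without it, mass could escape to infinity in the normalization and the limit $\tilde\phi$ could fail to inherit $\int G\,\tilde\phi = 1$; everything else is essentially routine once the estimates on $G$ proved earlier in this section are available.
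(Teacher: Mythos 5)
First, a point of comparison: the paper does not actually prove this lemma. It is imported wholesale from \cite{DG10} (see also the truncated problems in \cite{citeulike:4788671,MR2270822}), with the single remark that replacing the normalization of \cite{DG10} by $\int_0^L G\,\phi_L=1$ is an inessential modification. Your outline --- Krein--Rutman on $[0,L]$, uniform-in-$L$ a priori bounds, Arzel\`a--Ascoli, identification of the limit by uniqueness --- is precisely the standard route followed in those references, so in spirit you are reconstructing the cited proof rather than taking a different one. Two steps of the reconstruction, however, do not hold up as written.

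(i) The claim that $r(\mu)\to+\infty$ as $\mu\to 0^+$ is false: $K_0$ is a perfectly well-defined compact positive operator, so $r(0)<\infty$. What the argument actually needs is $r(0)>1$ for $L\ge L_0$, which is exactly where the threshold $L_0$ enters; this is repairable but must be argued, not obtained from a spurious blow-up. (ii) More seriously, the uniform-in-$L$ upper bound for $\phi_L$ on a fixed compact $[0,A]$ --- which you need both as the base of your super-solution comparison and to run Arzel\`a--Ascoli --- is asserted to follow from ``positivity, the ODE, and the normalization'', but the available inequalities point the wrong way. The maximum principle for $\mathcal S$ (Lemma~\ref{lem:maximum-principle}) transfers bounds from $[0,A]$ outward to $[A,L]$, not inward; the normalization only controls $\int_\delta^A\phi_L$ (where $G$ is bounded below); and the equation gives $\partial_x\phi_L\le (B+\lambda_L)\phi_L$, which propagates upper bounds forward in $x$ but only lower bounds backward toward $x=0$. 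Converting the integral control into a pointwise bound near the origin, uniformly in $L$, is the genuinely delicate Harnack-type step, and it is precisely the content of the estimates in \cite{DG10} that the paper chooses to cite rather than reprove. Note that the paper's own Section~\ref{sec:phi-upper-bounds} obtains the bound on $[0,A]$ from the locally uniform convergence \eqref{eq:phi_L-converges-uniformly}, i.e.\ from the conclusion of this very lemma, so you cannot borrow that argument without circularity. The remaining ingredients of your plan (the duality computation giving $\lambda_L\to\lambda$ up to a tail term controlled by the decay of $G$, and dominated convergence for the normalization of the limit) are sound once these bounds are in place.
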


In the rest of this section we always consider $L \geq L_0$, so that
Lemma \ref{lem:existence-phi-truncated} ensures the existence of a
solution.

\subsubsection{Upper bounds}
\label{sec:phi-upper-bounds}

In order to obtain bounds for $\phi$ we use a comparison argument,
valid for each truncated problem on $[0,L]$, and then pass to the
limit, as the bounds we obtain are independent of $L$. Let us do
this. The function $\phi_L$ is a solution of the equation
\begin{equation*}
  \mathcal{S}\phi_L(x) = 0
  \qquad (x \in (0,L)),
\end{equation*}
where $\mathcal{S}$ is the operator given by
\begin{equation}
  \label{eq:S-def}
  \mathcal{S}\phi (x) :=
  - \phi'(x) + (\lambda_L + B(x))\, \phi(x)
  - \int_0^x b(x,y)\, \phi(y) \,dy,
\end{equation}
defined for all $\phi \in W^{1,\infty}(0,L)$, and for $x \in
(0,L)$. The operator $\mathcal{S}$ satisfies the following
\emph{maximum principle}:

\begin{dfn}
  We say that $w \in W^{1,\infty}(0,L)$ is a \emph{supersolution} of
  $\mathcal{S}$ on the interval $I \subseteq (0,L)$ when
  \begin{equation*}
    \mathcal{S}w(x) \geq 0
    \qquad (x \in I).
  \end{equation*}
\end{dfn}

\begin{lem}[Maximum principle for $\mathcal{S}$]
  \label{lem:maximum-principle}
  Assume Hypotheses \ref{hyp:b-basic}. Take $A \geq 1/\lambda_L$. If
  $w$ is a supersolution of $\mathcal{S}$ on $(A,L)$, $w \ge 0$ on
  $[0,A]$ and $w(L) \ge 0$ then $w \geq 0$ on $[A,L]$.
\end{lem}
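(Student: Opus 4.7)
The plan is to argue by contradiction and compare $w$ with the test function $\psi(x) = x$, which turns out to be a strict supersolution of $\mathcal{S}$ on $(A, L)$ thanks to the assumption $A \geq 1/\lambda_L$. Indeed, using the identity $\int_0^x y\, b(x,y)\, dy = xB(x)$, a one-line computation gives
\begin{equation*}
\mathcal{S}\psi(x) = -1 + (\lambda_L + B(x))\,x - xB(x) = \lambda_L x - 1,
\end{equation*}
which is strictly positive for every $x > A$ as soon as $A\lambda_L \geq 1$.

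Assuming for contradiction that $w(x_*) < 0$ for some $x_* \in [A, L]$, I would introduce
\begin{equation*}
m := \inf_{x \in [A, L]} \frac{w(x)}{x} < 0,
\qquad
v(x) := w(x) - m\,x,
\end{equation*}
and verify two key properties of $v$. First, $v \geq 0$ on the whole interval $[0, L]$: on $[0, A]$ because both $w(x)$ and $-m\, x$ are nonnegative there, and on $[A, L]$ by the definition of $m$. Second, $v(A) \geq -mA > 0$ and $v(L) \geq -mL > 0$, so $v$ attains its minimum value $0$ at some interior point $x_0 \in (A, L)$. By linearity, $\mathcal{S} v = \mathcal{S} w + (-m)\,\mathcal{S}\psi$, and combining $\mathcal{S} w \geq 0$ with $-m > 0$ yields the strict pointwise inequality $\mathcal{S} v(x) \geq (-m)(\lambda_L x - 1) > 0$ on $(A, L)$.

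The contradiction then comes from integrating this inequality on a short interval $(x_0, x_0 + h)$ with $h > 0$ small. Using $v(x_0) = 0$, the Lipschitz bound $|v(x)| \leq \mathrm{Lip}(v)\,|x - x_0|$, and the local boundedness of $B$ (Hypothesis \ref{hyp:B-polynomial}), the term $\int_{x_0}^{x_0+h}(\lambda_L + B)\,v\,dx$ is of order $h^2$, while $\int_{x_0}^{x_0+h} F(x)\,dx \geq 0$ where $F(x) := \int_0^x b(x,y)\,v(y)\,dy \geq 0$ since $v \geq 0$ on $[0, x_0]$. One is left with
\begin{equation*}
v(x_0+h) \leq -h\,(-m)(\lambda_L x_0 - 1) + O(h^2) < 0
\end{equation*}
for $h > 0$ small enough, in direct contradiction with $v \geq 0$.

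The only delicate point is that $w \in W^{1,\infty}$ is merely Lipschitz and the supersolution inequality holds only almost everywhere, so $v$ need not be classically differentiable at $x_0$. I expect this to be the main technical obstacle, and I would circumvent it precisely by working throughout with the integrated form of the inequality, which only relies on the fundamental theorem of calculus for absolutely continuous functions and never on a pointwise derivative of $v$ at $x_0$. Everything else is a routine convergence of $O(h^2)$ remainders.
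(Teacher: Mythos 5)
Your proof is correct and is at heart the same argument as the paper's: both exploit that $\psi(x)=x$ satisfies $\mathcal{S}\psi(x)=\lambda_L x-1>0$ for $x>A\geq 1/\lambda_L$ (via $\int_0^x y\,b(x,y)\,dy=xB(x)$) and examine the point $x_0$ where $w(x)/x$ attains its negative minimum, i.e.\ where $v=w-mx$ touches zero. The only real difference is the final step: the paper concludes pointwise through the first-order condition $w'(x_0)=w(x_0)/x_0$ for $\mathcal{C}^1$ functions and defers $W^{1,\infty}$ to ``a usual approximation argument'', whereas you integrate the inequality $v'\leq(\lambda_L+B)v-F-(-m)(\lambda_L x-1)$ over $(x_0,x_0+h)$, which handles Lipschitz $w$ directly and is arguably the cleaner route. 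One small remark: the integrated version needs $B$ locally integrable near $x_0$ to make $\int_{x_0}^{x_0+h}(\lambda_L+B)v=o(h)$, for which you invoke Hypothesis~\ref{hyp:B-polynomial}; the lemma as stated assumes only Hypothesis~\ref{hyp:b-basic}, but this is harmless since Hypothesis~\ref{hyp:B-polynomial} is in force wherever the lemma is applied.
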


\begin{proof}
  We will prove the lemma when $w \in \mathcal{C}^1([0,L])$, and then
  one can prove it for $w \in W^{1,\infty}(0,L)$ by a usual
  approximation argument. Assume the contrary: there exists $x_0 \in
  (A,L)$ such that $w(x_0) < 0$ and $w(x)/x$ attains a minimum, i.e.,
  \begin{gather}
    w(x_0) < 0,
    \\
    \label{eq:x0-min}
    \frac{w(x_0)}{x_0} \leq \frac{w(x)}{x}
    \qquad (x \in (0,L)). 
  \end{gather}
  Then, because of (\ref{eq:x0-min}), we have $w'(x_0)=w(x_0)/x_0$ and
  hence
  \begin{multline*}
    \mathcal{S}(x_0)
    = -\frac{w(x_0)}{x_0} + (\lambda_L + B(x_0))\, w(x_0)
    - \int_0^{x_0} b(x_0,y)\, w(y) \,dy
    \\
    \leq -\frac{w(x_0)}{x_0} + (\lambda_L + B(x_0))\, w(x_0)
    - \frac{w(x_0)}{x_0} \int_0^{x_0} b(x_0,y)\, y \,dy
    \\
    = w(x_0) \Big( \lambda_L - \frac{1}{x_0}\Big)
    < 0,
  \end{multline*}
  which contradicts that $w$ is a supersolution on $(A,L)$.
\end{proof}

\medskip

One can easily check that $v(x) = x$ is a supersolution of
$\mathcal{S}$ on $(1/\lambda_L,L)$. A useful variant of that fact is
the following:

\begin{lem}
  \label{lem:supersolution-2}
  Assume Hypothesis
  \ref{hyp:b-basic}--\ref{hyp:b-nonconcentrated-at-0}, and also that
  \begin{equation}
    \label{eq:B(x)/x^mu}
    \frac{B(x)}{x^{\mu+1}} \to 0
    \ \text{ as $x \to +\infty$},
  \end{equation}
  where $\mu$ is that in Hypothesis
  \ref{hyp:b-nonconcentrated-at-0}. Take a smooth function
  $\eta:[0,+\infty) \to [0,1]$, with compact support contained on
  $[0,R]$.  Then, there exists $A > 0$ and $L_* > A$ such that the
  function
  \begin{equation*}
    v(x) = x + \eta(x)
  \end{equation*}
  is a supersolution of $\mathcal{S}$ on $(A,L)$ for any $L > L_*$.
\end{lem}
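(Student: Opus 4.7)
The plan is to compute $\mathcal{S}v$ directly by linearity of $\mathcal{S}$ and exploit the identity $\mathcal{S}x = \lambda_L x - 1$, which was already observed (it follows from $\int_0^x y\,b(x,y)\,dy = xB(x)$, the very definition \eqref{eq:B} of $B$). Writing $v(x) = x + \eta(x)$ gives
$$
\mathcal{S}v(x) = \lambda_L x - 1 + (\lambda_L + B(x))\eta(x) - \eta'(x) - \int_0^x b(x,y)\,\eta(y)\,dy.
$$
Since $\eta$ is supported in $[0,R]$, I will choose $A > R$ from the start so that on $(A,L)$ the terms $\eta(x)$ and $\eta'(x)$ vanish, and the expression reduces to
$$
\mathcal{S}v(x) = \lambda_L x - 1 - \int_0^R b(x,y)\,\eta(y)\,dy, \qquad x > A.
$$

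The main step is then to control the remaining integral. Using $0 \le \eta \le \mathbf{1}_{[0,R]}$ and applying Hypothesis \ref{hyp:b-nonconcentrated-at-0} with $\epsilon = R/x < 1$ (valid since $x > A > R$), I get
$$
\int_0^R b(x,y)\,\eta(y)\,dy \le \int_0^{(R/x)x} b(x,y)\,dy \le C\left(\frac{R}{x}\right)^{\mu} B(x) = C R^\mu \,\frac{B(x)}{x^\mu}.
$$
Assumption \eqref{eq:B(x)/x^mu} says exactly that $B(x)/x^{\mu+1}\to 0$, equivalently $B(x)/x^\mu = o(x)$ as $x\to\infty$. Therefore the integral is $o(x)$, while the leading term $\lambda_L x - 1$ grows linearly.

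To obtain the inequality uniformly in $L$, I invoke Lemma \ref{lem:existence-phi-truncated}: $\lambda_L \to \lambda > 0$, so there exists $L_* \geq L_0$ such that $\lambda_L \ge \lambda/2$ for all $L \ge L_*$. Combining this lower bound on $\lambda_L$ with the estimate $B(x)/x^\mu = o(x)$, I can pick $A > R$ large enough (depending only on $\lambda$, $R$, $\eta$, and the rate at which $B(x)/x^{\mu+1} \to 0$, but not on $L$) so that
$$
\lambda_L x - 1 - C R^\mu \,\frac{B(x)}{x^\mu} \geq 0 \qquad \text{for all } x > A,\ L > L_*,
$$
which gives $\mathcal{S}v \ge 0$ on $(A,L)$ as required. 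The only delicate point is this uniformity in $L$, but it follows automatically from the convergence $\lambda_L \to \lambda$; the rest is a direct computation using $\mathcal{S}x = \lambda_L x - 1$ and the non-concentration hypothesis on $b$.
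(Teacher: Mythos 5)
Your proof is correct and follows essentially the same route as the paper: compute $\mathcal{S}v$ using $\int_0^x y\,b(x,y)\,dy = xB(x)$, note that the $\eta$ and $\eta'$ terms vanish for $x>R$, bound the remaining integral via Hypothesis \ref{hyp:b-nonconcentrated-at-0} with $\epsilon = R/x$, and use $\lambda_L\to\lambda>0$ for uniformity in $L$. Your bound $CR^\mu B(x)/x^\mu = o(x)$ is in fact the correct consequence of that hypothesis (the paper writes $CR^\mu B(x)/x^{\mu+1}$, an apparent typo), and the conclusion is unaffected since assumption \eqref{eq:B(x)/x^mu} is exactly what makes either version dominated by $\lambda_L x$.
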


\begin{proof}
  We calculate, using \eqref{eq:B}, 
  \begin{equation*}
    \mathcal{S} v (x) =
    - 1 + \lambda_L x
    - \eta'(x) + (\lambda_L + B(x)) \eta(x)
    - \int_0^x b(x,y) \eta(y)  \,dy,
  \end{equation*}
  which for $x > R$ becomes
  \begin{equation*}
    \mathcal{S} v (x) =
    - 1 + \lambda_L x
    - \int_0^R b(x,y) \eta(y) \,dy
    \geq
    - 1 + \lambda_L x
    - C R^\mu \frac{B(x)}{x^{\mu+1}}
  \end{equation*}
  as $\eta$ is bounded by $1$, and using Hypothesis
  \ref{hyp:b-nonconcentrated-at-0}. Due to (\ref{eq:B(x)/x^mu}), this
  is positive for all $x$ greater than a certain number $A$ which
  depends only on $\lambda_L$. As $\lambda_L \to \lambda$ when $L \to
  +\infty$ (see Lemma \ref{lem:existence-phi-truncated}), one can
  choose $A$ to be independent of $L$.
\end{proof}

\begin{prp}
  Assume the hypotheses of Lemma \ref{lem:supersolution-2}. The
  solution $\phi$ of (\ref{eq:phi}) satisfies
  \begin{equation}
    \label{eq:phi-upper-bound}
    \phi(x) \leq C(1+x)
    \qquad (x \geq 0)
  \end{equation}
  for some $C > 0$.
\end{prp}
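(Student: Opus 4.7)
The plan is to prove the upper bound at the level of the truncated eigenfunctions $\phi_L$ by a comparison argument, using the maximum principle of Lemma~\ref{lem:maximum-principle} together with the supersolution constructed in Lemma~\ref{lem:supersolution-2}, and then pass to the limit $L \to +\infty$ via the uniform convergence provided by Lemma~\ref{lem:existence-phi-truncated}.

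Concretely, I would first fix $A$ and $L_\ast$ as in Lemma~\ref{lem:supersolution-2} (increasing $L_\ast$ if needed so that $1/\lambda_L \le A$ for all $L \ge L_\ast$, which is possible because $\lambda_L \to \lambda > 0$), and choose the cutoff $\eta : [0,+\infty) \to [0,1]$ there to additionally satisfy $\eta \equiv 1$ on $[0,A]$. Then $v(x) := x + \eta(x)$ is a supersolution of $\mathcal{S}$ on $(A,L)$ for every $L > L_\ast$, and $v \ge 1$ on $[0,A]$. The uniform convergence \eqref{eq:phi_L-converges-uniformly} implies that $M := \sup_{L \ge L_\ast} \|\phi_L\|_{L^\infty([0,A])}$ is finite, so I pick any constant $C \ge M$. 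With this choice $Cv \ge \phi_L$ on $[0,A]$, and $Cv(L) = CL + 0 > 0 = \phi_L(L)$. Since $\mathcal{S}$ is linear and $\mathcal{S}\phi_L = 0$ on $(0,L)$, the function $w := Cv - \phi_L$ satisfies $\mathcal{S}w = C\,\mathcal{S}v \ge 0$ on $(A,L)$, so the maximum principle of Lemma~\ref{lem:maximum-principle} yields $w \ge 0$ on $[A,L]$. Combined with the bound on $[0,A]$, this gives $\phi_L(x) \le Cv(x) \le C(1+x)$ for all $x \in [0,L]$. Letting $L \to +\infty$ and using \eqref{eq:phi_L-converges-uniformly} pointwise, the same bound $\phi(x) \le C(1+x)$ follows for all $x \ge 0$.

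The only genuinely nontrivial point is that the constant $C$ can be chosen independently of $L$, which is exactly why it is essential to have uniform (not merely pointwise) convergence $\phi_L \to \phi$ on the fixed compact interval $[0,A]$; this is supplied by Lemma~\ref{lem:existence-phi-truncated}. Everything else is bookkeeping: arranging that $A$ dominates $1/\lambda_L$ uniformly in $L$, verifying the sign of $w$ at the endpoints, and noting that linearity of $\mathcal{S}$ makes $Cv - \phi_L$ a supersolution.
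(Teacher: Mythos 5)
Your argument is correct and follows essentially the same route as the paper's own proof: the uniform convergence of $\phi_L$ on compacts gives an $L$-independent bound on $[0,A]$, the supersolution $v(x)=x+\eta(x)$ from Lemma~\ref{lem:supersolution-2} combined with the linearity of $\mathcal{S}$ and the maximum principle of Lemma~\ref{lem:maximum-principle} propagates the bound $\phi_L \le Cv$ to $[A,L]$, and one concludes by letting $L\to\infty$ (you are in fact slightly more careful than the paper in checking $A \ge 1/\lambda_L$ uniformly in $L$). The only quibble is that requiring $\eta\equiv 1$ on $[0,A]$ is mildly circular, since $A$ is produced by Lemma~\ref{lem:supersolution-2} only after $\eta$ (and its support radius $R$) is fixed; but this is harmless, as it suffices to take $\eta\equiv 1$ on $[0,1]$ (as the paper does), which already gives $v(x)=x+\eta(x)\ge 1$ on all of $[0,\infty)$ and hence $Cv\ge\phi_L$ on $[0,A]$ for $C$ at least the uniform sup of $\phi_L$ there.
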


\begin{proof}
  Due to the uniform convergence (\ref{eq:phi_L-converges-uniformly})
  we have the bound
  \begin{equation}
    \label{eq:phi-bound-near-0}
    \phi(x) \leq K(A)
    \qquad (x \in [0,A])
  \end{equation}
  for some constant $K = K(A)$ which does not depend on $L$. This
  proves the bound on $(0,A]$ for a fixed $A$.

  To prove the bound on all of $(0,+\infty)$ consider the function
  $v(x) = x + \eta(x)$ from Lemma \ref{lem:supersolution-2} with $\eta
  = {\bf 1}_{[0,1]}$, which is a supersolution on $(A,L)$ for some $A
  > 0$. Then, for any $C > 0$, the function
  \begin{equation*}
    w(x) := C\,v(x) - \phi(x)
  \end{equation*}
  is a supersolution on $(A,L)$. Since $\phi$ is bounded above on
  $(0,A)$ by \eqref{eq:phi-bound-near-0}, uniformly in $L$, we may
  choose $C \ge \| \phi \|_{L^\infty(0,A)}$ independently of $L$, such
  that
  \begin{equation*}
    \phi(x) \leq C\,v(x) 
    \qquad (x \in [0,A]),
  \end{equation*}
or equivalently $w \ge 0$ on $[0,A]$. 
As $\phi(L) = 0$, so that $w(L) \geq 0$, Lemma \ref{lem:maximum-principle} shows that $w \ge 0$ on $[0,L]$, which is the bound we wanted.
\end{proof}


\subsubsection{Lower bounds}
\label{sec:phi-lower-bounds}

Let us look now for subsolutions.

\begin{lem}
  \label{lem:subsolution-power-again}
  Assume Hypothesis
  \ref{hyp:b-basic}--\ref{hyp:b-nonconcentrated-at-1}, and also that
  $B(x) \to +\infty$ as $x \to +\infty$. Let $\varphi:(-\infty,0) \to
  [0,1]$ be a decreasing $\mathcal{C}^1$ function which is $1$ on
  $(-\infty,-\epsilon)$, $0$ on $(-\epsilon/2,0)$, and satisfies
  $|\varphi'(x)| \leq 4/\epsilon$ for $x \in (-\infty,0)$. Take $0
  \leq k < 1$. There is a number $A$ which is independent of $L$ for
  which $v(x) := x^k \, \varphi(x-L)$ is a subsolution of
  $\mathcal{S}$ on $(A,L)$.
\end{lem}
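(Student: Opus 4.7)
The plan is to verify $\mathcal{S}v(x) \leq 0$ by case analysis on where $x$ sits relative to the cutoff in $\varphi(\cdot - L)$: the \emph{bulk} $(A, L-\epsilon)$ where $\varphi(x-L) \equiv 1$, the \emph{transition zone} $(L-\epsilon, L-\epsilon/2)$ where $\varphi(x-L)$ drops from $1$ to $0$, and the \emph{tail} $[L-\epsilon/2, L]$ where $v$ vanishes. The central ingredient is a uniform strict lower bound
$$\int_0^x y^k\, b(x,y)\,dy \geq \tilde p_k\, x^k B(x) \qquad (x > 0)$$
with $\tilde p_k > 1$ depending only on $b$ and $k$. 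Rescaling by $z = y/x$ and using $\int_0^x y\, b(x,y)\,dy = x B(x)$, the measure $\tilde\mu_x(dz) := x z\, b(x,xz)/B(x)\,dz$ is a probability measure on $(0,1)$, and
$$\int_0^x y^k\, b(x,y)\,dy = x^k B(x) \int_0^1 z^{k-1}\, \tilde\mu_x(dz).$$
Since $k < 1$, $z^{k-1} > 1$ on $(0,1)$. Hypothesis \ref{hyp:b-nonconcentrated-at-1} applied with $\delta = 1/2$ yields $\epsilon_0 > 0$ (independent of $x$) with $\tilde\mu_x((1-\epsilon_0,1]) \leq 1/2$, using the elementary bound $y \leq x$ on that interval. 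Splitting at $z = 1-\epsilon_0$, one then reads off $\int_0^1 z^{k-1}\,\tilde\mu_x(dz) \geq 1 + \tfrac{1}{2}\bigl((1-\epsilon_0)^{k-1} - 1\bigr) =: \tilde p_k > 1$, uniformly in $x$.

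Equipped with this, the bulk is immediate: for $x \in (A, L-\epsilon)$ we have $v(x) = x^k$, $v'(x) = kx^{k-1}$, and $v(y) = y^k$ for every $y \leq x$, hence
$$\mathcal{S}v(x) \leq -k x^{k-1} + \lambda_L\, x^k + (1 - \tilde p_k)\, x^k B(x),$$
which is nonpositive for $x$ exceeding a threshold $A_1$ depending only on $\tilde p_k$, $\lambda$ and $B$, since $\lambda_L \to \lambda$ (Lemma \ref{lem:existence-phi-truncated}) and $B(x) \to +\infty$. In the transition zone one has $v(x) \leq x^k$ and $-v'(x) \leq 4 x^k / \epsilon$ (using $\varphi \geq 0$, $\varphi' \leq 0$ and $|\varphi'| \leq 4/\epsilon$), while $v(y) \geq y^k \mathbf{1}_{y \leq L-\epsilon}$ gives
$$\int_0^x b(x,y) v(y)\,dy \geq \int_0^x y^k b(x,y)\,dy - \int_{L-\epsilon}^x y^k b(x,y)\,dy.$$
The tail piece is bounded by $x^k \int_{x-\epsilon}^x b(x,y)\,dy$; for $x$ large enough that $\epsilon/x$ is smaller than the relative scale produced by Hypothesis \ref{hyp:b-nonconcentrated-at-1} applied with $\delta' := (\tilde p_k - 1)/2$, this is $\leq \tfrac{\tilde p_k - 1}{2}\, x^k B(x)$. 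Combined with the bulk bound, this yields $\mathcal{S}v(x) \leq x^k\bigl(4/\epsilon + \lambda_L - \tfrac{\tilde p_k - 1}{2} B(x)\bigr) \leq 0$ for $x$ above an $L$-independent threshold $A_2$. Finally, on $[L-\epsilon/2, L]$, both $v$ and $v'$ vanish identically, so $\mathcal{S}v(x) = -\int_0^x b(x,y) v(y)\,dy \leq 0$ trivially.

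Taking $A := \max(A_1, A_2)$ completes the argument; all thresholds involved depend only on $b$, $k$, $\lambda$, $\epsilon$ and the rate at which $B \to +\infty$, never on $L$ (the only $L$-dependent quantity, $\lambda_L$, being bounded uniformly for $L$ large by Lemma \ref{lem:existence-phi-truncated}). The main obstacle is precisely the strict uniform lower bound $\tilde p_k > 1$: turning the qualitative non-concentration at $1$ into a quantitative improvement over the trivial bound $\int z^{k-1} \tilde\mu_x \geq 1$; once that is in hand, the rest is bookkeeping.
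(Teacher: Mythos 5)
Your proposal is correct and follows essentially the same route as the paper: the strict lower bound $\int_0^x y^k b(x,y)\,dy \ge \tilde p_k\, x^k B(x)$ with $\tilde p_k>1$ is exactly the paper's Corollary \ref{cor:b-moment-bounds} (which you re-derive inline), the control of the mass of $b(x,\cdot)$ on $(x-\epsilon,x)$ via Hypothesis \ref{hyp:b-nonconcentrated-at-1} plays the role of the paper's Lemma \ref{lem:y^kb-unif-integrable-near-1}, and the conclusion rests on $B(x)\to+\infty$ absorbing the $4/\epsilon+\lambda_L$ error with $\lambda_L$ bounded uniformly in $L$. The only difference is cosmetic: the paper runs a single unified estimate (bounding $\varphi_L\le 1$, $-\varphi_L'\le 4/\epsilon$, and truncating the gain integral at $x-\epsilon$) where you split into bulk, transition and tail zones.
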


\begin{proof}
  First, from Corollary \ref{cor:b-moment-bounds} we have
  \begin{equation*}
    \int_0^x y^k \,b(x,y) \,dy
    \geq
    p_k'\, x^k \, B(x)
    \qquad (x > 0)
  \end{equation*}
  for some $p_k' > 1$. Hence, by Lemma
  \ref{lem:y^kb-unif-integrable-near-1}, we may choose $\epsilon_* >
  0$ such that
  \begin{equation}
    \label{eq:subsol-prf1}
    \int_0^{(1 - \epsilon_*) x} y^k \,b(x,y) \,dy
    \geq
    C_k \, x^k \, B(x)
    \qquad (x \geq 1),
  \end{equation}
  with $C_k > 1$. Now, define $\varphi_L(x) = \varphi(x-L)$.  We have,
  for $x > \max\{R_k, \epsilon / \epsilon_*\}$, and using
  (\ref{eq:subsol-prf1}),
  \begin{multline*}
    \label{eq:Sxk}
    \mathcal{S} v (x) :=
    - k \, x^{k-1} \varphi_L(x) - x^k \varphi_L'(x)
    + (\lambda_L + B(x)) x^k \varphi_L(x)
    - \int_0^x b(x,y) \, y^k \varphi_L(x)\,dy
    \\
    \leq
     \frac{4}{\epsilon} \, x^k 
    + (\lambda_L + B(x)) x^k
    - \int_0^{x-\epsilon} b(x,y) \, y^k \,dy
    \\
    \leq
    x^k \Big( \frac{4}{\epsilon}
    + \lambda_L - B(x) (C_k -1) \Big),
  \end{multline*}
  which is negative for $x$ greater than some number $A$ which depends
  only on $\lambda_L$ and $k$. In order to be able to apply
  (\ref{eq:subsol-prf1}) we have also used that $x-\epsilon \geq
  (1-\epsilon_*) x$ for $x \geq \epsilon/\epsilon_*$.
\end{proof}

\begin{lem}
  Assume Hypothesis
  \ref{hyp:b-basic}--\ref{hyp:b-nonconcentrated-at-1}, and also that
  $B(x) \to +\infty$ as $x \to +\infty$. For any $0 \leq k < 1$ there
  is some constant $C_k > 0$ such that
  \begin{equation*}
    \phi(x) \geq C_k x^k
    \qquad (x > 0).
  \end{equation*}
\end{lem}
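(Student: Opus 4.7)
The strategy mirrors that of the upper bound: use the subsolution constructed in Lemma \ref{lem:subsolution-power-again} together with the maximum principle (Lemma \ref{lem:maximum-principle}) applied to the truncated problem, then pass to the limit $L \to \infty$ using Lemma \ref{lem:existence-phi-truncated}.

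First I would fix $0 \le k < 1$ and, using Lemma \ref{lem:subsolution-power-again}, pick $A > 0$ large enough (and independent of $L$, for $L$ sufficiently large) so that $v_L(x) := x^k\,\varphi(x-L)$ is a subsolution of $\mathcal{S}$ on $(A,L)$. By taking $A$ larger if necessary I can also ensure $A \ge 1/\lambda_L$ for all $L$ large, so that the maximum principle applies.

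Next I need a positive lower bound for $\phi_L$ on $[0,A]$ which is uniform in $L$. This is where I would use that in the growth-fragmentation case Theorem \ref{thm:existence-eigenproblem} gives $\phi(0) > 0$ and $\phi > 0$ on $(0,\infty)$; by continuity $\phi \ge m > 0$ on $[0,A]$. Combined with the uniform convergence $\phi_L \to \phi$ on $[0,A]$ from Lemma \ref{lem:existence-phi-truncated}, for all $L$ large enough we get $\phi_L \ge m/2$ on $[0,A]$. Since $v_L(x) \le A^k$ on $[0,A]$, the constant $c := m/(2A^k)$ satisfies
\[
c\,v_L(x) \le \phi_L(x) \qquad (x \in [0,A]),
\]
independently of $L$.

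Now consider $w_L := \phi_L - c\,v_L$. By linearity of $\mathcal{S}$ and the fact that $\mathcal{S}\phi_L = 0$ while $\mathcal{S}v_L \le 0$ on $(A,L)$, $w_L$ is a supersolution of $\mathcal{S}$ on $(A,L)$. By construction $w_L \ge 0$ on $[0,A]$, and $w_L(L) = \phi_L(L) - c L^k \varphi(0) = 0$ since $\varphi$ vanishes on $(-\epsilon/2,0)$. Lemma \ref{lem:maximum-principle} then yields $\phi_L(x) \ge c\,x^k \varphi(x-L)$ for all $x \in [A,L]$. Fixing $x \ge A$ and letting $L \to \infty$, the uniform convergence gives $\phi_L(x) \to \phi(x)$ while $\varphi(x-L) \to 1$, so $\phi(x) \ge c\,x^k$ for $x \ge A$. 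On $[0,A]$, the inequality $\phi(x) \ge m \ge (m/A^k)\,x^k$ is immediate. Setting $C_k := \min\{c, m/A^k\}$ concludes the proof.

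The only non-routine point is the uniform lower bound $\phi_L \ge m/2$ on $[0,A]$; here it is crucial that we work in the growth-fragmentation case, where $\phi(0)>0$, so the positivity propagates down to the boundary. In the self-similar fragmentation setting this lemma would fail near $x=0$ (where $\phi(x)=Cx$), which is consistent with the restriction to $k<1$ and to the growth-fragmentation equation.
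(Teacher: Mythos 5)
Your proposal is correct and follows essentially the same route as the paper: the subsolution $x^k\varphi(x-L)$ from Lemma \ref{lem:subsolution-power-again}, a uniform-in-$L$ positive lower bound for $\phi_L$ on $[0,A]$ obtained from the locally uniform convergence and the positivity of $\phi$, the maximum principle of Lemma \ref{lem:maximum-principle} applied to $\phi_L - C x^k\varphi(x-L)$, and passage to the limit $L\to\infty$. Your closing remark correctly identifies the role of $\phi(0)>0$ in the growth-fragmentation case, which the paper leaves implicit.
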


\begin{proof}
  Take $\varphi$ as in Lemma \ref{lem:subsolution-power-again}, and
  let $A$ be the one given there. The function
  \begin{equation*}
    w(x) = \phi_L(x) - C x^k \varphi(x-L) 
  \end{equation*}
  is a supersolution on $(A,L)$ for any choice of $C > 0$. Now, the
  uniform convergence of $\{\phi_L\}$ from Lemma
  \ref{lem:existence-phi-truncated} together with the positivity of
  $\phi$ from Theorem \ref{thm:existence-eigenproblem} imply that
  there exists $C_A$ such that for $L$ large enough
  \begin{equation*}
    \phi_L(x) \geq C_A
    \qquad (x \in [0,A]),
  \end{equation*}
  which in turn implies
  $$
  \phi_L(x) \geq (A^{-k} \, C_A) \, x^k \qquad (x \in [0,A]).
  $$
  
  With $C := A^{-k} \, C_A$ we have $w \ge 0$ on $[0,A]$, $w(L) = 0$,
  and we conclude using the maximum principle from Lemma
  \ref{lem:maximum-principle}, and again the locally uniform
  convergence of $\{\phi_L\}$ from Lemma
  \ref{lem:existence-phi-truncated}.
\end{proof}


\section{Proof of the main theorem}
\label{sec:proof-main}

Finally, we are ready to complete the proof of Theorem \ref{thm:main}. We give
the proof of point 1 in the next subsection, and that of point 2 in the
following one.

\subsection{Exponential convergence to the asymptotic profile}
\label{sec:proof-point1}

Now that the inequality in Theorem \ref{thm:eedi-b} and the bounds on
the profiles $G$ and $\phi$ have been shown, we can use them to prove the 
first point in Theorem \ref{thm:main}

\begin{proof}[Proof for self-similar fragmentation]

  Let us show that equations \eqref{eq:G-bounds}, \eqref{eq:N-tails},
  \eqref{eq:phi-comparable}, \eqref{eq:eedi-b1} and \eqref{eq:eedi-b2}
  hold for $G$, $\phi$. Then, as a direct application of Theorem
  \ref{thm:eedi-b}, point 1 of Theorem~\ref{thm:main} follows.
  \begin{itemize}
  \item The bound \eqref{eq:G-bounds} is a immediate consequence
  of \eqref{eq:Ma-N->delta-power}. With $\zeta(y) \equiv 1$ and 
  whatever $M$ and $R$ be, 
  \eqref{eq:phi-comparable} is satisfied due to the 
  fact that $\phi(y) = y$ for the self-similar fragmentation model.

  \item Using \eqref{eq:Ma-N->delta-power}
  for any $a_1> B_M/\gamma$
  and $a_2<B_m^2/(\gamma\, B_M)$ 
  (with $0< \gamma\le 2$) we have for any $x \ge M$, $R>1$ and for 
  some constants denoted by $C$ 
  \begin{multline*}
    \int_{Rx}^\infty y \, G(y) \, dy 
    \le
    C \int_{Rx}^\infty  y \, e^{- a_2 \, y^\gamma} \, dy
    \le
    C \int_{Rx}^\infty  y^{\gamma-1} \, e^{- a'  \, y^\gamma} \, dy
    \\
    =
    C \, e^{- a' \, R^\gamma \, x^\gamma} 
    \le
    C \, e^{- a_1x^\gamma}
    \le
    C \, G(x),
  \end{multline*}
  where we consider $a'$ such that:
  $\frac{a_1}{R^\gamma}<a'<a_2\,(<a_1)$ (which is possible since $R>1$),
  and that proves \eqref{eq:N-tails}.

  \item  We split the proof of \eqref{eq:eedi-b1}  in two steps:
  \begin{itemize}
  \item For $y < 2RM$: On one hand, we have $G(x) \, \phi(y) \le C \,
    y$. On the other hand, Hypotheses \ref{hyp:b-bounded-below} and
    \ref{hyp:B-polynomial} show that $b(y,x) \geq C y^{\gamma-1}$, so
    for $y < 2RM$ \eqref{eq:eedi-b1} holds, as $\gamma \le 2$.
  \item For $2RM \leq y \leq 2Rx$: We have, again using
    \eqref{eq:Ma-N->delta-power},
    $$
    G(x) \, \phi(y) \le C \, y \, e^{- a_2 x^\gamma} \le C \, y \,
    e^{- a_2 y^\gamma/(2^\gamma R^\gamma)} \le C \, y^{\gamma -1}
    $$
    and we conclude as in the previous case, by means of Hypotheses
    \ref{hyp:b-bounded-below} and \ref{hyp:B-polynomial}.
  \end{itemize}
\item Finally, considering $\zeta(y) =1$ and Hypothesis
  \ref{hyp:b-bounded-below}, we obtain $\zeta(y) \, y^{-1} = y^{-1}
  \le C \, y^{\gamma-1}$ for any $y \ge M$ because $\gamma \ge 0$, and
  therefore \eqref{eq:eedi-b2} holds.
\end{itemize}
\end{proof}

\begin{proof}[Proof for growth-fragmentation]

  As in the self-similar fragmentation case, we only need to show that
  for growth-fragmentation model, $K$, $M$, $R$ can be chosen
  appropriately so that equations \eqref{eq:G-bounds},
  \eqref{eq:N-tails}, \eqref{eq:phi-comparable}, \eqref{eq:eedi-b1}
  and \eqref{eq:eedi-b2} hold for $G$, $\phi$. In this way, as a
  direct application of Theorem \ref{thm:eedi-b}, point 1 of
  Theorem~\ref{thm:main} holds in that case.

  First, for the case $b(x,y) = 2B_b/x$ with $B_b > 0$ a constant, the
  bound \eqref{eq:gf-G-bounds-simple-gamma=0} holds, and $\phi(x) =
  C_\phi$ for some constant $C_\phi > 0$, as remarked at the beginning
  of section \ref{sec:phi-bounds}. In this simpler case,
  \eqref{eq:G-bounds} is a consequence of \eqref{eq:GleK},
  \eqref{eq:N-tails} is a consequence of
  \eqref{eq:gf-G-bounds-simple-gamma=0}, and \eqref{eq:phi-comparable}
  obviously holds with $\zeta(y) \equiv 1$, since $\phi$ is a
  constant. Similarly, \eqref{eq:eedi-b1} is obtained from
  \eqref{eq:GleK}, and \eqref{eq:eedi-b2} is true with $\zeta(y)
  \equiv 1$ and $K = 2B_b$. This allows us to apply Theorem
  \ref{thm:eedi-b} and prove point 1 in Theorem \ref{thm:main} in this
  case.

  Let us consider now the case $\gamma > 0$. Note that the
  requirements in eq. \eqref{eq:b-hypotheses-for-phi-bounds} hold, as
  due to (\ref{eq:b-bounded-above}) one may take $\mu = 1$ in
  Hypothesis \ref{hyp:b-nonconcentrated-at-0}, and we have $\gamma \in
  (0,2)$. Hence, all the bounds in Theorem \ref{thm:gf-bounds} are
  valid here.
  
  \begin{itemize}
  \item The bound \eqref{eq:G-bounds} is a immediate consequence of
    \eqref{eq:GleK}. With $\zeta(y)= y^\epsilon$, where $0 < \epsilon
    < 1$ and whatever $M$ and $R$ are, \eqref{eq:phi-comparable} is
    satisfied due to the fact that $\phi(y)$ verifies
    \eqref{eq:phi-bounds}: for $Rz < y < 2Rz$,
    $$
    \frac{\phi(y)}{\zeta(y)}\le C (1+y^{1-\epsilon})
    \leq
    C(C_0+C_\epsilon z^{1-\epsilon})
    \leq C \phi(z).
    $$

  \item Using \eqref{eq:B-polynomial}, \eqref{eq:phi-bounds} and
    \eqref{eq:gf-G-bounds-simple}  we have for
    any $x \ge M$ and for some constants denoted by $C$
  \begin{multline*}
    \int_{Rx}^\infty  \phi(y) \, G(y) \, dy \le C 
   \int_{Rx} ^\infty (1+ y) \, e^{- a_2 \, y^{\gamma+1}}
    \, dy
   \\
   \le C \int_{Rx}^\infty y^\gamma\, e^{-a'\, y^{\gamma+1}} \,dy
   = C\,e^{-a'\, (Rx)^{\gamma+1}}
   \leq
   C\,G(x),
  \end{multline*}
  which holds by taking $0 < a' < a_2 < (B_m)^2/(B_M(\gamma+1))$ and
  $R > 1$ such that
  \begin{equation*}
    a' R^{\gamma+1} > \frac{B_M}{\gamma+1}.
  \end{equation*}
  This proves \eqref{eq:N-tails}.
  
\item As in the self-similar fragmentation case, we split the proof of
  \eqref{eq:eedi-b1} in two steps:
  \begin{itemize}
  \item On the one hand for $y \le 2RM$ and $x < y$, using
    \eqref{eq:phi-bounds} and \eqref{eq:GleK-strong} we have $G(x) \,
    \phi(y) \le C \, y (1+y) $ and using Hypothesis
    \ref{hyp:b-bounded-below} one sees \eqref{eq:eedi-b1} holds
    because $0<\gamma \le 2$.
  \item On the other hand for $y \ge 2RM$ and $x \ge \frac{y}{2R}$
    (which falls in the case $\max\{2RM, 2Rx\}=2Rx$) we have, again
    using \eqref{eq:phi-bounds} \eqref{eq:gf-G-bounds-simple},
  $$
  G(x) \, \phi(y) \le C \, (1+y)  e^{-a_2 x^{\gamma+1}}
  \le C \, y^{\gamma -1},
  $$
  and we conclude due to Hypothesis \ref{hyp:b-bounded-below}.
 \end{itemize}
\item Finally, considering $\zeta(y) =y^\epsilon$ with $0<\epsilon<
  \min\{\gamma,1\}$ and Hypothesis~\ref{hyp:b-bounded-below}, we
  obtain $\zeta(y) \, y^{-1} = y^{\epsilon-1} \le C \, y^{\gamma-1}$
  for any $y \ge M$, and therefore \eqref{eq:eedi-b2} holds.
  \end{itemize}
\end{proof}

\subsection{Spectral gap in $L^2$ space with polynomial weight}

Gathering the first point of Theorem \ref{thm:main} with some recent
result obtained in \cite{GMM**} (see also \cite{CMcmp} for the first
results in that direction) we may enlarge the space in which the
spectral gap holds and prove part 2 of Theorem~\ref{thm:main}

We will make use of the following result

\begin{thm}[\cite{GMM**}]
  \label{thm:GMM**}
  Consider two Hilbert spaces $H$ and $\HH$ such that $H \subset \HH$
  and $H$ is dense in $\HH$. Consider two unbounded closed operators
  with dense domain $L$ on $H$, $\Lambda$ on $\HH$ such that
  $\Lambda_{|H}= L$.  On $H$ assume that

  \begin{enumerate}
  \item \label{nullspace} There is $G \in H$ such that $L \, G = 0$ with
    $\| G \|_H = 1$;
    
  \item \label{invariant-quantity} Defining $\psi(f) := \langle f,G
    \rangle_H \, G$, the space $ H_0 := \{ f \in H; \,\, \psi(f) = 0 \}
    $ is invariant under the action of $L$.

  \item \label{Ldissipative} $L- \alpha$ is dissipative on $H_0$ for
    some $\alpha < 0$, in the sense that
    $$
    \forall \, g \in D(L) \cap H_0 \qquad ((L-\alpha) \, g, g)_H \le 0,
    $$
    where $D(L)$ denotes the domain of $L$ in $H$.

  \item \label{Lsemigroup} $L$ generates a semigroup $e^{t \, L}$ on $H$;

    \smallskip\noindent Assume furthermore on $\HH$ that
    
  \item \label{A+B}
    there exists a continuous linear form $\Psi : \HH \to \R$ such
    that $\Psi_{|H} = \psi$;
    
    \smallskip\noindent and $\Lambda$ decomposes as $\Lambda = \AA + \BB$
    with

  \item \label{Abounded} $\AA$ is a bounded operator from $\HH$ to $H$;

  \item \label{Bdissipative}
    $\BB$ is a closed unbounded operator on $\HH$ (with same domain as
    $D(\Lambda)$ the domain of $\Lambda$) and satisfying the dissipation
    condition
    $$
    \forall \, g \in D(\Lambda) \qquad ((\BB-\alpha) \, g, g)_\HH \le 0.
    $$
  \end{enumerate}
  \smallskip\noindent
  Then, for any $a \in (\alpha,0)$ there exists  $C_a \ge 1$ such that for any $g_{in} \in \HH$ 
  there holds: 
  $$
  \forall \, t \ge 0 \qquad
  \| e^{t \Lambda} \, g_{in} - \Psi(g_{in}) \, G \|_\HH
  \le C_a \, e^{a \, t} \, \| g_{in} - \Psi(g_{in}) \, G \|_\HH.
  $$
\end{thm}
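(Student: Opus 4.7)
The plan is to exploit the decomposition $\Lambda = \AA + \BB$ via Duhamel's formula to transfer the known spectral gap of $L$ on the small space $H$ over to $\Lambda$ on the large space $\HH$. Let $\Pi g := \Psi(g)\, G$ denote the rank-one projection onto $\mathrm{span}\, G$; by hypothesis \ref{A+B} it is bounded on $\HH$ and its restriction to $H$ agrees with $\psi$. Since $\AA : \HH \to H$ is bounded and the embedding $H \hookrightarrow \HH$ is continuous, $\AA$ is in particular bounded on $\HH$, so the bounded perturbation theorem applied to hypothesis \ref{Bdissipative} shows that $\Lambda = \AA + \BB$ generates a $C_0$-semigroup on $\HH$. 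The conservation law $\Psi\circ e^{t\Lambda} = \Psi$, which on $H$ follows from hypothesis \ref{invariant-quantity} and $e^{tL}$ preserving $H_0$, extends to $\HH$ by density since $H$ is dense in $\HH$ and both sides are continuous. Hence $\Pi$ commutes with $e^{t\Lambda}$, and it suffices to bound $\|e^{t\Lambda} g\|_\HH$ for $g$ in the closed invariant subspace $\HH_0 := \ker \Psi$.

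For such $g$, I would write the Duhamel identity
\begin{equation*}
e^{t\Lambda} g \;=\; e^{t\BB} g + \int_0^t e^{(t-s)\Lambda}\, \AA\, e^{s\BB} g\, ds.
\end{equation*}
The free term is controlled by $\|e^{t\BB} g\|_\HH \le e^{\alpha t}\|g\|_\HH$ from hypothesis \ref{Bdissipative}. For the integrand, $\AA e^{s\BB} g \in H$ with $\|\AA e^{s\BB} g\|_H \lesssim e^{\alpha s}\|g\|_\HH$ by hypotheses \ref{Abounded} and \ref{Bdissipative}. Decomposing $\AA e^{s\BB} g = \psi(\AA e^{s\BB} g) + r_s$ with $r_s \in H_0$, and using $\Lambda_{|H} = L$ together with hypothesis \ref{Ldissipative}, one obtains $\|e^{(t-s)L} r_s\|_H \le e^{\alpha(t-s)}\|r_s\|_H$, so the $H_0$-piece contributes at most $C(1+t)\, e^{\alpha t}\|g\|_\HH$ to the integral (transferred to $\HH$ via the continuous embedding).

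It remains to estimate the $G$-component $G \cdot \int_0^t \psi(\AA e^{s\BB}g)\,ds$ produced by the projection. To do so I apply $\Psi$ to both sides of the Duhamel identity: using the conservation law, $\Psi(g)=0$, and the normalization $\Psi(G)=1$ (which follows from $\|G\|_H=1$ together with $\Psi_{|H}=\psi$), one extracts
\begin{equation*}
\int_0^t \psi(\AA e^{s\BB} g)\, ds \;=\; -\Psi(e^{t\BB} g) \;-\; \Psi\!\left(\text{already-controlled } H_0\text{-remainder}\right),
\end{equation*}
whose modulus is again bounded by $C(1+t)\,e^{\alpha t}\|g\|_\HH$. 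Collecting the estimates yields $\|e^{t\Lambda} g\|_\HH \le C(1+t)\,e^{\alpha t}\|g\|_\HH$ for $g \in \HH_0$; for any $a\in(\alpha,0)$ the polynomial prefactor is absorbed via $(1+t)\,e^{\alpha t} \le C_a\, e^{a t}$, which gives the claimed bound.

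The principal technical obstacle I anticipate is the rigorous justification of the Duhamel identity and the transfer of the invariance of $\Psi$ from $H$ to the unbounded setting on $\HH$: one must verify that the integrand $s \mapsto e^{(t-s)\Lambda}\AA e^{s\BB} g$ is measurable and the integral converges in $\HH$, and that the semigroup generation on $\HH$ from hypotheses \ref{Bdissipative} and \ref{Abounded} is strong enough to support the manipulations. A secondary subtlety is that a single Duhamel iteration only produces the factor $(1+t)$; if one needed a constant $C_a$ with sharper dependence as $a\to\alpha$, one would iterate Duhamel $n$ times to replace $(1+t)$ by $(1+t)^n/n!$ (which is harmlessly absorbed into any exponential), and this iteration is the core of the GMM enlargement method.
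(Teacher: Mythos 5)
The paper itself gives no proof of this statement: Theorem~\ref{thm:GMM**} is imported verbatim from \cite{GMM**} and used as a black box, so there is no internal argument to compare against. That said, your sketch is exactly the factorization/Duhamel strategy of that reference: split $\Lambda=\AA+\BB$, write $e^{t\Lambda}=e^{t\BB}+\int_0^t e^{(t-s)\Lambda}\AA e^{s\BB}\,ds$, use the dissipativity of $\BB-\alpha$ to get $\|e^{s\BB}\|_{\HH\to\HH}\le e^{\alpha s}$, push the integrand into the small space via $\AA:\HH\to H$, and let the spectral gap of $L$ on $H_0$ kill the remainder, absorbing the resulting $(1+t)$ factor into $e^{(a-\alpha)t}$. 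This is the intended proof, and your handling of the $G$-component is sound; note only that it simplifies: applying $\Psi$ to the Duhamel identity and using $\Psi\circ e^{(t-s)\Lambda}=\Psi$ together with $\Psi(e^{t\Lambda}g)=\Psi(g)=0$ gives directly $\int_0^t\Psi(\AA e^{s\BB}g)\,ds=-\Psi(e^{t\BB}g)$, with no extra remainder term.

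Two points deserve tightening, both of which you partly anticipate. First, hypothesis~\ref{Bdissipative} as stated gives only dissipativity of $\BB-\alpha$, not the range condition of Lumer--Phillips, so the existence of $e^{t\BB}$ (and hence the bounded-perturbation generation of $e^{t\Lambda}$ and the Duhamel formula itself) does not literally follow; in \cite{GMM**} the generator property of $\BB$ is part of the hypotheses, and here it must be read into the statement (the conclusion already presupposes that $e^{t\Lambda}$ exists). Second, you use the identification $e^{(t-s)\Lambda}r_s=e^{(t-s)L}r_s$ for $r_s\in H_0\subset H$ and the agreement of $\Psi\circ e^{t\Lambda}$ with $\psi\circ e^{tL}$ on $H$; this requires knowing that $H$ is invariant under $e^{t\Lambda}$ with restriction $e^{tL}$, which follows from $\Lambda_{|H}=L$ plus uniqueness of solutions of the abstract Cauchy problem in $\HH$, and should be stated rather than assumed. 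With those two remarks the argument is complete and coincides with the proof in the cited reference.
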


\begin{proof}[Proof of part 2 in Theorem~\ref{thm:main}.] We split
  the proof into several steps.

  \smallskip\noindent {\sl Step 1.} Under the hypotheses of Theorem
  \ref{thm:main}, take $G$, $\phi$ solutions of (\ref{eq:G}),
  (\ref{eq:phi}), respectively. We define $H := L^2(\phi \, G^{-1} \,
  dx)$ and $\HH := L^2(\theta \, dx)$ with $\theta = \phi(x) + x^k$,
  $k \ge 1$. Due to the bounds of $G$ and $\phi$ proved above, one
  can see that $H \subseteq \mathcal{H}$.

  We define
  $$
  \Lambda \, g := - a(x) \, \partial_x g - (\lambda + B(x)) \, g + \LL_+
  g
  $$
  on $\HH$ and $L := \Lambda_{|H}$ on $H$. We also define
  $$
  \Psi (g) := (g,G)_H = \int_0^\infty g \, \phi \, dx
  \qquad (g \in \mathcal{H})
  $$
  and $\psi := \Psi|_H$. From part 1 in Theorem~\ref{thm:main} it is
  clear that $L$ satisfies points
  \ref{nullspace}--\ref{Lsemigroup}. Moreover, $\Psi$ is correctly
  defined and continuous on $\mathcal{H}$ as soon as $ k > 3$, so that
  $\HH \subset L^1(\phi \, dx)$ thanks to Cauchy-Schwarz's
  inequality. To finish proving point \ref{A+B}, for given $M, R > 0$,
  we define $\chi := M \, {\1}_{[0,R]}$,
  $$
  (\AA \, g)(x) := g(x) \, \chi (x),
  $$
  and
  $$
  \BB g := \left[ - g \, \chi - a(x) \partial_x g - \lambda \, g
    \vphantom{\int}\right] + \left[ \LL_+ g - B(x) \,
    g\vphantom{\int}\right]
  = \Lambda(g) - g \chi,
  $$
  so that $\Lambda = \AA + \BB$ and clearly $\AA$ satisfies point
  \ref{Abounded}.  In order to conclude we have to establish that
  $\BB$ satisfies point \ref{Bdissipative} for some well chosen $k$,
  $M$ and $R$. Let us prove this separately for the cases $a(x) = x$
  and $a(x) = 1$.

  \smallskip\noindent {\sl Step 2. The self-similar fragmentation
    equation.} For $a(x) = x$, one has $\phi(x) = x$ and we may easily
  compute, for $m \geq 1$,
  $$
  (\BB \, g,g)_{L^2(x^m \, dx)} = T_1 + T_2 + T_3 + T_4 + T_5,
  $$
  with
  \begin{align*}
    T_1 :=& \int_0^\infty(- x \, \partial_x  g) \,  g \, x^m  = \int_0^\infty {\partial_x (x^m \, x) \over 2} \, g^2  = {m+1 \over 2} \int g^2 \, x^m \, dx, \\
    T_2 :=& \int_0^\infty(- 2 \,   g) \,  g \, x^m =  - 2  \int g^2 \, x^m \, dx, \\
    T_3 :=&
    - \int_0^\infty B(x) \, g  \, x^m \, g
    \leq
    - B_m \int g^2 \, x^{m+\gamma}, \\
    T_4 :=& - \int_0^\infty g^2 \, x^{m} \, \chi , \\
    T_5 :=& \int_0^\infty  (\LL_+ g)   \, x^m \, g.
  \end{align*}
  Introducing the notation $\GG(x) = \int_x^\infty |g(y)| \,
  y^{\gamma-1} \, dy$, we compute, using (\ref{eq:b-bounded-above}),
  \begin{eqnarray*}
    T_5 &\le&
    \int_0^\infty x^m \, |g(x)|
    \left( P_M B_M \int_x^\infty |g(y)| \, y^{\gamma-1} \, dy \right) dx 
    \\
    &=& - \frac{P_M B_M}{2}  \int_0^\infty 2 \, \GG \, \GG' \, x^{m+1-\gamma} \, dx \\
    &=& \frac{P_M B_M}{2}  \int_0^\infty \GG^2  \, \partial_x( x^{m+1-\gamma}) \, dx.
  \end{eqnarray*}
  Thanks to Cauchy-Schwarz inequality we have for any $a > 1$
  $$
  \GG^2 (x) \le \int_x^\infty y^{2\gamma - 2 + a} \, g^2(y) \, dy
  \int_x^\infty y^{- a} \, dy \le { x^{1-a} \over a- 1} \int_x^\infty
  y^{2\gamma - 2 + a} \, g^2(y) \, dy .
  $$
  We then deduce
  \begin{eqnarray*}
    T_5
    &\le& \frac{P_M B_M}{2}  \,  {m+1-\gamma \over  a-1}
    \int_0^\infty y^{2\gamma - 2 + a} \, g^2(y) \int_0^y x^{1-a} \, x^{m-\gamma} \, dx \, dy \\
    &\le& \nu \int_0^\infty y^{\gamma +m } \, g^2(y) \, dy,
  \end{eqnarray*}
  with
  $$
  \nu = \nu(a,m) = \frac{P_M B_M}{2}
  \, \mu(a,m), \quad \mu(a,m)
  := {(m+1)
    -\gamma \over (m+1) -\gamma - (a-1)} \times {1 \over a-1},
  $$
  provided that $m+2-\gamma-a > 0$. In particular, we notice that for
  $m=1$ and $a^* = 2-\gamma/2 \in (1,2)$ we have $\nu(a^*,1) =
  2 P_M B_M / (2-\gamma)$, so that for $m=1$
  $$
  T_5 \le
  \frac{2 P_M B_M}{2-\gamma} \int_0^\infty x^{1+\gamma} \, g^2 \, dx
  =: C_0 \int_0^\infty x^{1+\gamma} \, g^2 \, dx.
  $$
  We also need to use the above calculation for $m = k$. We can find
  $a$ and $k$ such that
  $$
  k > 3, \quad 1 < a < k+2-\gamma, \quad \nu(a,k) < \frac{B_m}{2}.
  $$
  To see this, take
  \begin{equation*}
    a = 1 + \frac{2 P_M B_M}{B_m},
    \quad \text{ so that }
    \quad
    \frac{1}{a-1} \leq \frac{B_m}{2 p_m B_M}
  \end{equation*}
  and then take $k$ large enough so that
  \begin{equation*}
    \frac{k+1-\gamma}{k+1-\gamma-a+1} \leq 2.
  \end{equation*}
  Putting together the preceding estimates we have proved
  \begin{eqnarray*}
    (\BB \, g,g)_\HH 
    &\le& \int_0^\infty x \, g^2(x) \,
    \{ - 1 - \chi + (C_0-1) \, x^\gamma \} \, dx \\
    &&+ \int_0^\infty x^k \, g^2(x) \,
    \{ {k-3 \over 2}  - \chi - \frac{B_m}{2} \, x^\gamma \} \, dx.
  \end{eqnarray*}
  Recalling the definition of $\chi$, for any $C > 0$ we can find $R$
  and $M$ large enough so that
  \begin{eqnarray*}
    (\BB \, g,g)_\HH  
    &\le& - C \int_0^\infty \theta  \, g^2 \, dx,
  \end{eqnarray*}
  and that proves that assumption (7) in Theorem~\ref{thm:GMM**} is
  fulfilled with $\alpha = C = \beta$. The conclusion of
  Theorem~\ref{thm:GMM**} provides the conclusion in
  Theorem~\ref{thm:main}.

  \smallskip\noindent {\sl Step 3. The growth-fragmentation equation.}
  In this case we have $a(x) = 1$ and $\phi$ is the solution to the
  dual eigenvalue problem (\ref{eq:phi}). We first compute
  \begin{eqnarray*}
  (\BB \, g,g)_{L^2(\phi \, dx)}  = T_{123} + T_4 + T_5, 
\end{eqnarray*}
with
\begin{eqnarray*}
  T_{123} &:=& \int_0^\infty \left\{ -  \partial_x  g - \lambda g -  B \, g \right\} \,  g \, \phi  
  \\
  &=& \int_0^\infty \left\{ {1 \over 2} \partial_x \phi - \lambda \phi -  B\, \phi \right\} \,  g^2  
  \\
  &=& - {1 \over 2}  \int_0^\infty \left\{   \lambda \phi +  B \, \phi + \LL^{+*} \phi \right\} \,  g^2  \le 0,
  \\
  T_4 &:=& - \int_0^\infty  g^2 \, \chi  \, \phi, \\
  T_5 &:=& \int_0^\infty  (\LL_+ g)   \, \phi \, g \\
  &\le& \int_0^\infty  B_M \,C_1 \,  (1+x) \, |g(x)| \left(  \int_x^\infty |g(y)| \, y^{\gamma-1} \, dy \right) dx 
  \\
  &\le& C_2 \int_0^\infty g^2 \, x^\gamma \, (1+x) \,  dx 
  \\
  &\le& C_3 \int_0^\infty g^2 \, ( \phi (x) + x^{1+\gamma}) \,  dx ,
\end{eqnarray*}
where as in the previous step we define $C_2 = (B_M C_1/2) \,
(\mu(a,0) + \mu(a,1))$ for some $1 < a < 2-\gamma$ (recall that here
we have made the hypothesis $\gamma \in (0,1)$) and $C_3$ comes from
the fact that $\phi$ is uniformly lower bounded by a positive
constant.  Following the computation of the previous step we easily
get an estimate on $(\BB \, g,g)_{L^2(x^k \, dx)} $, choosing $k$ as
before. Putting all together we obtain
\begin{eqnarray*}
  (\BB \, g,g)_\HH 
  &=& \int_0^\infty (\BB \, g) \, g \, (\phi(x) + x^k) \, dx
  \\
  &\le& \int_0^\infty \{C_3 \, ( \phi (x) + x^{1+\gamma})  -\chi  \, \phi \} \, g^2 \, dx \\
  &&+   \int_0^\infty \{ {k\over2} x^{k-1} - (\lambda  +\chi ) \, x^k
  - \frac{B_m}{2} \, x^{k+\gamma} \} \, g^2 \, dx.
\end{eqnarray*}
Again, for any $C > 0$ we can find $R$ and $M$ large enough so that
\begin{equation*}
  (\BB \, g,g)_\HH  
  \le
  - C \int_0^\infty \theta  \, g^2 \, dx,
\end{equation*}
and we conclude as in the previous step.
\end{proof}

\section{Appendix}

The following results are useful for dealing with weak conditions on
the fragmentation coefficient $b(x,y)$.

\begin{lem}
  \label{lem:mass-non-concentrated-1}
  Let $\{f_i\}_{i \in I}$ be a family of nonnegative finite
  measures on $[0,1]$, indexed in some set $I$, and take $k > 0$
  fixed. The following two statements are equivalent:
  \begin{gather}
    \label{eq:mass-comparable}
    \exists\,
    \epsilon,\delta \in (0,1) :
    \quad
    \int_{[1-\epsilon,1]} f_i
    \leq
    \delta \int_{[0,1]} f_i
    \quad \text{ for all } i \in I.
    \\
    \label{eq:moments-decreasing}
    \exists\,
    P \in (0,1) :
    \int_{[0,1]} x^k f_i(x) \,dx
    \leq
    P \int_{[0,1]} f_i
    \quad \text{ for all } i \in I.
  \end{gather}
\end{lem}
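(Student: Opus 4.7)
The plan is to prove the two implications separately by direct estimates exploiting the monotonicity of $x \mapsto x^k$ on $[0,1]$ (for $k > 0$, it is increasing).

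For the direction \eqref{eq:moments-decreasing} $\Rightarrow$ \eqref{eq:mass-comparable}, I would fix any $\epsilon \in (0,1)$ and use that $x^k \geq (1-\epsilon)^k$ on $[1-\epsilon,1]$ to write
\[
(1-\epsilon)^k \int_{[1-\epsilon,1]} f_i \le \int_{[1-\epsilon,1]} x^k\, f_i \le \int_{[0,1]} x^k\, f_i \le P \int_{[0,1]} f_i.
\]
This gives $\int_{[1-\epsilon,1]} f_i \le \bigl(P/(1-\epsilon)^k\bigr)\int_{[0,1]} f_i$, and it then suffices to choose $\epsilon$ small enough that $(1-\epsilon)^k > P$, so one can take $\delta := P/(1-\epsilon)^k < 1$.

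For the converse \eqref{eq:mass-comparable} $\Rightarrow$ \eqref{eq:moments-decreasing}, I would split the integral at $1-\epsilon$ and use $x^k \le (1-\epsilon)^k$ on $[0,1-\epsilon]$ and $x^k \le 1$ on $[1-\epsilon,1]$:
\[
\int_{[0,1]} x^k f_i \le (1-\epsilon)^k \int_{[0,1-\epsilon]} f_i + \int_{[1-\epsilon,1]} f_i.
\]
The main trick (what took a moment to spot) is that a naive bound $(1-\epsilon)^k + \delta$ need not be $<1$. One must instead write $\int_{[0,1-\epsilon]} f_i = \int_{[0,1]} f_i - \int_{[1-\epsilon,1]} f_i$ and regroup:
\[
\int_{[0,1]} x^k f_i \le (1-\epsilon)^k \int_{[0,1]} f_i + \bigl(1-(1-\epsilon)^k\bigr) \int_{[1-\epsilon,1]} f_i,
\]
and then apply hypothesis \eqref{eq:mass-comparable} to the last term to get
\[
\int_{[0,1]} x^k f_i \le \Bigl[1 - \bigl(1-(1-\epsilon)^k\bigr)(1-\delta)\Bigr] \int_{[0,1]} f_i.
\]
Since $\epsilon \in (0,1)$, $k > 0$ and $\delta \in (0,1)$ both factors $1-(1-\epsilon)^k$ and $1-\delta$ are strictly positive, so the bracketed constant $P$ lies in $(0,1)$, as required.

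Neither direction presents a serious obstacle; the only real subtlety is the regrouping step in the $\eqref{eq:mass-comparable} \Rightarrow \eqref{eq:moments-decreasing}$ direction, where one must avoid the crude bound $(1-\epsilon)^k + \delta$ (which can exceed $1$) and instead exploit the fact that subtracting the tail from the total mass is what produces the strict inequality $P<1$.
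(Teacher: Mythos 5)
Your proof is correct and follows essentially the same route as the paper: the implication \eqref{eq:mass-comparable} $\Rightarrow$ \eqref{eq:moments-decreasing} is word-for-word the paper's computation (split at $1-\epsilon$, rewrite $\int_{[0,1-\epsilon)}f_i$ as total mass minus tail, obtain $P=\delta+(1-\delta)(1-\epsilon)^k$), and your converse rests on the same inequality $(1-\epsilon)^k\int_{[1-\epsilon,1]}f_i\le\int x^k f_i$ that the paper uses, only presented directly (choosing $\epsilon$ with $(1-\epsilon)^k>P$ and $\delta=P/(1-\epsilon)^k$) rather than by contraposition. No gaps.
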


\begin{proof}
  First, assume (\ref{eq:mass-comparable}) holds for some $\epsilon,
  \delta \in (0,1)$. Observe that (\ref{eq:mass-comparable}) is easily
  seen to be equivalent to
  \begin{equation}
    \label{eq:int0epsilon}
    \int_{[0,1-\epsilon)} f_i \geq (1-\delta) \int_{[0,1]} f_i
    \quad \text{ for all } i \in I.
  \end{equation}
  Using this,
  \begin{multline}
    \label{eq:intxf<intf}
    \int_{[0,1]} x^k f_i(x) \,dx
    =
    \int_{[0,1-\epsilon)} x^k f_i(x) \,dx
    + \int_{[1-\epsilon,1]} x^k f_i(x) \,dx
    \\
    \leq
    (1-\epsilon)^k \int_{[0,1-\epsilon)} f_i(x) \,dx
    + \int_{[1-\epsilon,1]} f_i(x) \,dx
    \\
    =
    \int_{[0,1]} f_i(x) \,dx
    -\big(1-(1-\epsilon)^k\big) \int_{[0,1-\epsilon)} f_i(x) \,dx
    \\
    \leq
    \Big(
    1 - \big(1-(1-\epsilon)^k\big) (1-\delta)
    \Big)
    \int_{[0,1]} f_i(x) \,dx
    \\
    =
    \big( \delta + (1-\delta)(1-\epsilon)^k \big)
    \int_{[0,1]} f_i(x) \,dx
    ,
  \end{multline}
  where (\ref{eq:int0epsilon}) was used in the last step. This proves
  (\ref{eq:moments-decreasing}) with $P := \delta +
  (1-\delta)(1-\epsilon)^k < 1$.

  Now, let us prove (\ref{eq:mass-comparable}) assuming
  (\ref{eq:moments-decreasing}) by contradiction. Pick
  $\epsilon,\delta \in (0,1)$, and take $i \in I$ such that
  (\ref{eq:mass-comparable}) is contradicted for these
  $\epsilon,\delta$. Then,
  \begin{multline}
    \label{eq:nc-proof1}
    \int_{[0,1]} x^k f_i(x) \,dx
    \geq
    \int_{[1-\epsilon,1]} x^k f_i(x) \,dx
    \\
    \geq
    (1-\epsilon)^k \int_{[1-\epsilon,1]} f_i(x) \,dx
    \geq
    (1-\epsilon)^k \delta \int_{[0,1]} f_i(x) \,dx.
  \end{multline}
  Hence, choosing $\delta$ close to $1$ and $\epsilon$ close to $0$
  gives an $i \in I$ such that (\ref{eq:moments-decreasing}) is
  contradicted.
\end{proof}

\begin{lem}
  \label{lem:mass-non-concentrated-2}
  Let $\{f_i\}_{i \in I}$ be a family of nonnegative finite measures
  on $[0,1]$, indexed in some set $I$. The following two statements
  are equivalent:
  \begin{gather}
    \label{eq:mass-uniformly-int}
    \forall\, \delta > 0
    \ \exists\,
    \epsilon > 0 :
    \quad
    \int_{[1-\epsilon,1]} f_i
    \leq
    \delta \int_{[0,1]} f_i
    \quad \text{ for all } i \in I.
    \\
    \label{eq:moments-decreasing-to-0}
    \left.
    \begin{split}
      &\text{There exists a strictly decreasing function $k \mapsto
        p_k$},
      \\
      &\quad \text{ with }
      \quad 0 < p_k < 1,
      \quad
      \lim_{k \to +\infty} p_k = 0
      \\
      &\quad \text{ and } \int_{[0,1]}
      x^k f_i(x) \,dx \leq p_k \int_{[0,1]} f_i \quad \text{ for all }
      i \in I. \
    \end{split}
    \right\}
  \end{gather}
\end{lem}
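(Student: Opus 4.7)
The lemma is a ``uniform integrability'' strengthening of Lemma~\ref{lem:mass-non-concentrated-1}: the first direction demands that the bounding constant $p_k$ tend to $0$ as $k \to \infty$, and the converse asks that the $\epsilon$ in \eqref{eq:mass-uniformly-int} can be chosen arbitrarily small for any prescribed $\delta$. My plan is to invoke Lemma~\ref{lem:mass-non-concentrated-1} essentially as a black box, carefully tracking how the explicit constant $P = \delta + (1-\delta)(1-\epsilon)^k$ appearing in its proof depends on the parameters $\delta$, $\epsilon$ and $k$.

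For \eqref{eq:mass-uniformly-int} $\Rightarrow$ \eqref{eq:moments-decreasing-to-0}, I would set
$$
q_k := \sup_{i \in I'} \frac{\int_{[0,1]} x^k f_i(x)\,dx}{\int_{[0,1]} f_i},
$$
where $I' \subseteq I$ indexes the measures of positive total mass (for the trivial ones the inequality is vacuous). The function $k \mapsto q_k$ is non-increasing with values in $[0,1]$. Given any target $\eta > 0$, apply \eqref{eq:mass-uniformly-int} with $\delta = \eta/2$ to produce some $\epsilon > 0$; the computation \eqref{eq:intxf<intf} from the proof of Lemma~\ref{lem:mass-non-concentrated-1} yields $q_k \le \eta/2 + (1-\eta/2)(1-\epsilon)^k$, which is below $\eta$ as soon as $k$ is large enough that $(1-\epsilon)^k < \eta/2$. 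Hence $q_k \to 0$, and the same argument with $\eta = 1$ gives $q_k < 1$ for every $k > 0$. A strictly decreasing $p_k$ with $q_k \le p_k < 1$ and $p_k \to 0$ is then obtained by an elementary perturbation, for instance $p_k := q_k + \min\{(1-q_k)/2,\ c_k\}$ where $c_k$ is any strictly decreasing positive function tending to $0$.

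For \eqref{eq:moments-decreasing-to-0} $\Rightarrow$ \eqref{eq:mass-uniformly-int}, the direction is reversed: since $x^k \ge (1-\epsilon)^k$ on $[1-\epsilon,1]$, one has
$$
\int_{[1-\epsilon,1]} f_i
\le (1-\epsilon)^{-k} \int_{[0,1]} x^k f_i(x)\,dx
\le \frac{p_k}{(1-\epsilon)^k} \int_{[0,1]} f_i.
$$
Given $\delta > 0$, I would first choose $k$ so large that $p_k < \delta/2$, which is possible because $p_k \to 0$, and then choose $\epsilon > 0$ so small that $(1-\epsilon)^{-k} < 2$; this delivers \eqref{eq:mass-uniformly-int}. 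The only mild technicality in the entire argument is arranging $p_k$ to be \emph{strictly} decreasing rather than merely non-increasing, but this is cosmetic and does not touch the substance of the proof, which is already contained in the quantitative estimate inherited from Lemma~\ref{lem:mass-non-concentrated-1}.
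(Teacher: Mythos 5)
Your proof is correct and follows essentially the same route as the paper's: both implications rest on the two elementary estimates \eqref{eq:intxf<intf} and \eqref{eq:nc-proof1} already established in the proof of Lemma~\ref{lem:mass-non-concentrated-1}. The only differences are organizational — you prove the second implication directly rather than by contradiction, and you are if anything more explicit than the paper about arranging $p_k$ to be strictly decreasing (a point the paper glosses over with ``one can take $p_k$ so that \eqref{eq:moments-decreasing-to-0} holds'').
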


\begin{proof}
  Let us first prove (\ref{eq:moments-decreasing-to-0}) assuming
  (\ref{eq:mass-uniformly-int}). Equation (\ref{eq:intxf<intf}) holds
  here also, so
  \begin{equation*}
    \int_{[0,1]} x^k f_i(x) \,dx
    \leq
    \big( \delta + (1-\delta)(1-\epsilon)^k \big)
    \int_{[0,1]} f_i(x) \,dx
  \end{equation*}
  Choosing $\delta$ small enough, and then $k$ large enough, one can
  take $p_k$ so that (\ref{eq:moments-decreasing-to-0}) holds.

  Now, let us prove the other implication by contradiction. Assume
  (\ref{eq:moments-decreasing-to-0}) does not hold, so there is some
  $\delta > 0$ such that, for every $\epsilon > 0$,
  (\ref{eq:moments-decreasing-to-0}) fails at least for some $i \in
  I$. With the same calculation as in \eqref{eq:nc-proof1}, choosing
  $\epsilon = 1 - (1/2)^{1/k}$, we have that for every $k \geq 1$
  there is some $i \in I$ such that
  \begin{equation*}
    \int_{[0,1]} x^k f_i(x) \,dx
    \geq
    \frac{\delta}{2} \int_{[0,1]} f_i(x) \,dx.
  \end{equation*}
  This contradicts (\ref{eq:moments-decreasing-to-0}).
\end{proof}

\begin{lem}
  \label{lem:y^kb-unif-integrable-near-1}
  Consider a fragmentation coefficient $b$ satisfying Hypothesis
  \ref{hyp:b-basic}, \ref{hyp:number-of-pieces} and
  \ref{hyp:b-nonconcentrated-at-1}, and take $0 \leq k \leq 1$. For
  every $\delta > 0$ there exists an $\epsilon > 0$ such that
  \begin{equation}
    \label{eq:y^kb-unif-int-near-1}
    \int_0^{(1-\epsilon)x} y^k \, b(x,y) \,dy
    \geq
    (1-\delta) \int_0^{x} y^k \, b(x,y) \,dy
    \qquad (x > 0).
  \end{equation}
\end{lem}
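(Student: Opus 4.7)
My plan is to reduce the claim directly to Hypothesis~\ref{hyp:b-nonconcentrated-at-1} by sandwiching the factor $y^k$ between the constant $x^k$ on the slice $[(1-\epsilon)x,x]$ and a linear multiple of $y$ on the full interval $[0,x]$. The linear lower bound is what will let me bring in $B(x)$ via its defining relation~\eqref{eq:B}, and this is the only place where the restriction $k\le 1$ enters.

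More precisely, I would first estimate the ``bad'' tail by the trivial pointwise bound $y^k\le x^k$ on $[(1-\epsilon)x,x]$ and then invoke Hypothesis~\ref{hyp:b-nonconcentrated-at-1} with a parameter $\delta'>0$ to be chosen at the end: this gives $\epsilon=\epsilon(\delta')>0$, uniform in $x$, such that
$$\int_{(1-\epsilon)x}^x y^k\,b(x,y)\,dy \ \le\ x^k \int_{(1-\epsilon)x}^x b(x,y)\,dy \ \le\ \delta'\,x^k\,B(x).$$
For the full integral I would use the hypothesis $0\le k\le 1$: since $y/x\in[0,1]$ one has $(y/x)^k\ge y/x$, i.e.\ $y^k\ge x^{k-1}\,y$ on $[0,x]$. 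Combined with the defining identity $\int_0^x y\,b(x,y)\,dy=xB(x)$ from~\eqref{eq:B}, this yields the uniform lower bound
$$\int_0^x y^k\,b(x,y)\,dy\ \ge\ x^{k-1}\int_0^x y\,b(x,y)\,dy\ =\ x^k\,B(x).$$

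Dividing the two estimates produces a ratio at most $\delta'$, so choosing $\delta'=\delta$ yields \eqref{eq:y^kb-unif-int-near-1}. I do not anticipate any genuine obstacle here: the argument is a two-line sandwich, and Hypothesis~\ref{hyp:number-of-pieces} is not even needed. The only subtle point is that the elementary inequality $y^k\ge x^{k-1}y$ on $[0,x]$ does require $k\le 1$; for $k>1$ it reverses and the uniform lower bound on the full integral would have to be obtained by a genuinely different, more delicate argument (for instance, by combining Hypothesis~\ref{hyp:b-nonconcentrated-at-1} with Hypothesis~\ref{hyp:number-of-pieces} to locate a fixed fraction of the mass of $b(x,\cdot)$ in a region of the form $[\eta x,(1-\eta)x]$).
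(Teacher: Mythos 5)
Your proposal is correct and is essentially the paper's own proof: bound $y^k$ by $x^k$ on the slice $[(1-\epsilon)x,x]$ and apply Hypothesis~\ref{hyp:b-nonconcentrated-at-1}, then bound $y^k$ below by $x^{k-1}y$ (using $k\le 1$) and the definition \eqref{eq:B} of $B$ to get $\int_0^x y^k b(x,y)\,dy\ge x^kB(x)$. You are also right that Hypothesis~\ref{hyp:number-of-pieces} is dispensable here; the paper invokes it only through a removable detour ($\delta/\kappa$ times $\int_0^x b=\kappa B(x)$), and the only cosmetic improvement to your write-up would be to chain the inequalities directly rather than ``dividing,'' so as not to worry about the degenerate case $B(x)=0$.
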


\begin{proof}
  Equivalently, we need to prove that for every $\delta > 0$ there
  exists $\epsilon > 0$ such that
  \begin{equation}
    \label{eq:y^kb-unif-int-near-1-reverse}
    \int_{(1-\epsilon)x}^x y^k \, b(x,y) \,dy
    \leq
    \delta \int_0^{x} y^k \, b(x,y) \,dy
    \qquad (x > 0).
  \end{equation}
  Using Hypothesis \ref{hyp:b-nonconcentrated-at-1}, take $\epsilon >
  0$ such that (\ref{eq:b-nonconcentrated-at-1}) holds with
  $\delta/\kappa$ instead of $\delta$, where $\kappa$ is the one in
  Hypothesis \ref{hyp:number-of-pieces}. Then,
  \begin{multline*}
    \int_{(1-\epsilon)x}^x y^k \, b(x,y) \,dy
    \leq
    x^k \int_{(1-\epsilon)x}^x \, b(x,y) \,dy
    \leq
    x^k \frac{\delta}{\kappa} \int_0^x \, b(x,y) \,dy
    \\
    =
    x^k \delta B(x)
    =
    x^k \delta \int_0^x \frac{y}{x} b(x,y)\,dy
    \leq
    \delta \int_0^x y^k b(x,y)\,dy.
    \qedhere
  \end{multline*}
\end{proof}

\begin{cor}
  \label{cor:b-moment-bounds}
  Consider a fragmentation coefficient $b$ satisfying Hypotheses
  \ref{hyp:b-basic}, \ref{hyp:number-of-pieces} and
  \ref{hyp:b-nonconcentrated-at-1}. Then there exists a strictly
  decreasing function $k \mapsto p_k$ for $k \geq 0$ with $\lim_{k \to
    +\infty} p_k = 0$,
  \begin{equation}
    \label{eq:Pk-range}
    p_k > 1 \text{ for } k \in [0,1),
    \quad p_1 = 1,
    \quad 0 < p_k < 1 \text{ for } k > 1,
  \end{equation}
  and such that
  \begin{equation}
    \label{eq:b-moments}
    \int_0^x y^k b(x,y) \,dy
    \leq
    p_k\, x^k B(x)
    \qquad (x > 0, \ k > 0).
  \end{equation}
  Also, for each $0 \leq k < 1$ there exists $p_k' > 1$ such that
  \begin{equation}
    \label{eq:b-moments-k<1}
    \int_0^x y^k \,b(x,y) \,dy
    \geq
    p_k'\, x^k B(x)
    \qquad (x > 0,\ k \in [0,1)).
  \end{equation}
\end{cor}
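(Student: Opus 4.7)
The plan is to recast both inequalities as uniform-in-$x$ moment estimates on a rescaled family of measures on $[0,1]$, and then to combine two ingredients: Lemma \ref{lem:mass-non-concentrated-2} to handle the decay for large $k$, and an elementary H\"older interpolation to pin down the value $p_1 = 1$. For each $x > 0$, I will let $\mu_x$ be the push-forward of the measure $b(x,\cdot)\,dy$ under $y \mapsto z = y/x$, so that $\mu_x$ is supported on $[0,1]$. Hypothesis \ref{hyp:number-of-pieces} and the definition of $B$ give respectively $\int d\mu_x = \kappa B(x)$ and $\int z\,d\mu_x = B(x)$, while $\int z^k d\mu_x = x^{-k}\int_0^x y^k b(x,y)\,dy$. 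Thus the corollary amounts to controlling $\int z^k d\mu_x$ uniformly in $x$.

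For the upper bound in the regime $k > 1$, I introduce the weighted family $d\nu_x := z\,d\mu_x$, for which $\int d\nu_x = B(x)$. Hypothesis \ref{hyp:b-nonconcentrated-at-1} translates, via $\nu_x([1-\epsilon,1]) \le \mu_x([1-\epsilon,1]) \le \delta B(x) = \delta \int d\nu_x$, into exactly the uniform-integrability hypothesis \eqref{eq:mass-uniformly-int} of Lemma \ref{lem:mass-non-concentrated-2} applied to the family $\{\nu_x\}_{x>0}$. The lemma then provides a strictly decreasing function $m \mapsto \tilde q_m \in (0,1)$ with $\tilde q_m \to 0$ and $\int z^m d\nu_x \le \tilde q_m \int d\nu_x$, which, after undoing the rescaling, reads $\int_0^x y^{m+1}b(x,y)\,dy \le \tilde q_m\, x^{m+1} B(x)$. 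Setting $p_k := \tilde q_{k-1}$ for $k > 1$ delivers the desired bound with $p_k \in (0,1)$, strictly decreasing, and $p_k \to 0$.

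To cover $k \in [0,1]$ and to ensure $p_1 = 1$ exactly, I will use H\"older's inequality with exponents $1/k$ and $1/(1-k)$:
\[
\int_0^x y^k b(x,y)\,dy \le \Bigl(\int_0^x y\,b\,dy\Bigr)^k \Bigl(\int_0^x b\,dy\Bigr)^{1-k} = \kappa^{1-k} x^k B(x),
\]
so $p_k := \kappa^{1-k}$ works on $[0,1]$, strictly decreasing from $\kappa$ at $k=0$ to $1$ at $k=1$. Gluing the two pieces gives a globally strictly decreasing $p_k$ realizing all sign constraints of \eqref{eq:Pk-range}; the match at $k=1$ is immediate since $\tilde q_{k-1} < 1$ strictly for every $k > 1$.

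For the lower bound \eqref{eq:b-moments-k<1}, I apply Hypothesis \ref{hyp:b-nonconcentrated-at-1} with $\delta = 1/2$ to select a single $\epsilon > 0$ (independent of $x$) with $\mu_x([1-\epsilon,1]) \le \tfrac{1}{2}B(x)$, hence $\int_{[0,1-\epsilon]} z\,d\mu_x \ge \tfrac{1}{2}B(x)$. For $z \le 1-\epsilon$ and $k \in [0,1)$ the pointwise bound $z^k \ge (1-\epsilon)^{k-1} z$ holds with $(1-\epsilon)^{k-1} > 1$, and integrating yields $\int z^k d\mu_x \ge B(x) + \tfrac{1}{2}\bigl((1-\epsilon)^{k-1}-1\bigr)B(x)$, giving the estimate with $p_k' := 1 + \tfrac{1}{2}((1-\epsilon)^{k-1}-1) > 1$. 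None of the individual estimates is deep; the only mild design choice is to use H\"older rather than Lemma \ref{lem:mass-non-concentrated-2} on $[0,1]$, so that the upper bound meets the value $1$ exactly at $k=1$ and remains monotonically decreasing when joined to $\tilde q_{k-1}$ from the right.
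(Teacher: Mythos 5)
Your proof is correct and rests on the same core device as the paper's: rescale $b(x,\cdot)\,dy$ to a family of measures on $[0,1]$ and feed the non-concentration Hypothesis \ref{hyp:b-nonconcentrated-at-1} into Lemma \ref{lem:mass-non-concentrated-2}. In particular, your use of the weighted family $z\,d\mu_x$ for $k>1$ is exactly the construction indicated in the remark following the corollary (taking $f_x(z)=z\,b(x,xz)$), and it is the right way to get $p_k<1$ strictly without the spurious factor $\kappa$. You deviate from the paper in two minor but worthwhile ways. First, on $[0,1]$ you use H\"older interpolation between $\int b = \kappa B$ and $\int y\,b = xB$ to get the explicit $p_k=\kappa^{1-k}$, which realizes \eqref{eq:Pk-range} cleanly; the paper obtains this range of $k$ by applying the lemma to $b(x,xz)$ itself and leaves the normalization $p_1=1$ implicit (it is exact by the definition of $B$). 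Second, for the lower bound \eqref{eq:b-moments-k<1} the paper applies Lemma \ref{lem:mass-non-concentrated-2} to the family $z^k b(x,xz)$, using Lemma \ref{lem:y^kb-unif-integrable-near-1} to verify the hypothesis, whereas you give a direct two-line computation from the pointwise inequality $z^k\ge(1-\epsilon)^{k-1}z$ on $[0,1-\epsilon]$ together with $\int_{[0,1-\epsilon]}z\,d\mu_x\ge\tfrac12 B(x)$; your route is more self-contained and yields an explicit constant $p_k'$, at the cost of not reusing the already-proved auxiliary lemma. Both arguments are sound; the only point worth stating explicitly in a write-up is that the glued function is strictly decreasing across $k=1$ because $\tilde q_{k-1}<1=p_1$ for every $k>1$, which you do note.
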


\begin{proof}
  Apply Lemma \ref{lem:mass-non-concentrated-2} to the set of
  measures $\{f_x\}_{x > 0}$ given by
  \begin{equation*}
    f_x(z) := b(x,xz)
    \qquad (z \in [0,1]),
  \end{equation*}
  for which Hypothesis \ref{hyp:b-nonconcentrated-at-1} gives
  precisely (\ref{eq:mass-uniformly-int}). Then, by a change of
  variables and using Hypothesis \ref{hyp:number-of-pieces},
  (\ref{eq:moments-decreasing-to-0}) is exactly (\ref{eq:b-moments}).

  For the second part, fix $0 \leq k < 1$. Applying Lemma
  \ref{lem:mass-non-concentrated-2} to the set of measures $\{z^k
  b(x,xz)\}_{x > 0}$ gives $p_k'$ so that (\ref{eq:b-moments-k<1})
  holds, as this set also satisfies (\ref{eq:mass-uniformly-int}) (by
  Lemma \ref{lem:y^kb-unif-integrable-near-1}).
\end{proof}

\begin{rem}
  One can omit Hypothesis \ref{hyp:number-of-pieces} in the previous
  corollary and still get the result for $k \geq 1$ by taking $f_x(z)
  := z\,b(x,xz)$ in the proof.
\end{rem}

\bigskip {\footnotesize \noindent\textit{Acknowledgments.} The first
  two authors acknowledge support from the project
  MTM2008-06349-C03-03 DGI-MCI (Spain) and the Spanish-French project
  FR2009-0019. The second author is also supported by 2009-SGR-345
  from AGAUR-Generalitat de Catalunya. The third author acknowledges
  support from the project ANR-MADCOF. Finally, we wish to thank the
  hospitality of the Centre de Recerca Matemàtica, where this work was
  started, and the Isaac Newton Institute, where it was finished.}


\end{document}